\tikzset{
   %Define style for tiles
    tile/.style={
           circle,
           draw=black},
}
\newcommand{\uhr}{\upharpoonright}
\newtheorem{definition}{Definition}
\newtheorem{theorem}{Theorem}
\newtheorem{prop}{Proposition}
\newtheorem{lemma}{Lemma}
\newtheorem{question}{Question}
\newtheorem{corollary}{Corollary}
\newcommand{\bdefn}{\begin{defn}}
\newcommand{\edefn}{\end{defn}}
\newcommand{\bthm}{\begin{thm}}
\newcommand{\ethm}{\end{thm}}
\newcommand{\bitem}{\begin{itemize}}
\newcommand{\eitem}{\end{itemize}}
\newcommand{\bpf}{\begin{proof}}
\newcommand{\epf}{\end{proof}}
\newcommand{\on}{\operatorname}
\newcommand{\poly}{\on{poly}}
\newcommand{\Nat}{\mathbb{N}}
\newcommand{\Z}{\mathbb{Z}}
\newcommand{\type}{\operatorname{type}}
\newcommand{\alignment}{\operatorname{alignment}}
\newcommand{\concat}{^\smallfrown}
\newcommand{\lex}{\leq_{\text{lex}}}
\newcommand{\tlex}{\text{lex}}
\newcommand{\kO}{\mathcal O}
\newcommand{\gray}[1]{\colorbox{gray!30} {$#1$} }
\newcommand{\gleftarrow}{\gray \leftarrow}
\newcommand{\grightarrow}{\gray \rightarrow}
\newcommand{\guparrow}{\gray \uparrow}
\newcommand{\gdownarrow}{\gray \downarrow}
\newcommand{\gulcorner}{\gray \ulcorner}
\newcommand{\gllcorner}{\gray \llcorner}
\newcommand{\glrcorner}{\gray \lrcorner}
\newcommand{\gurcorner}{\gray \urcorner}
\title{Topological completely positive entropy is no simpler in $\mathbb Z^2$-SFTs}
\author[L.B.~Westrick]{Linda Westrick} 
\address{Penn State University Department of Mathematics, University Park, PA}
 \email{westrick@psu.edu}
\thanks{The author was supported by the Cada R. and Susan Wynn Grove Early Career Professorship in Mathematics.}
\begin{document}

\begin{abstract}
We construct $\mathbb Z^2$-SFTs at every computable level of 
the hierarchy of topological completely positive entropy (TCPE), answering 
 Barbieri and Garc\'ia-Ramos, who asked if there was one 
 at level 3.  Furthermore, 
we show the property of TCPE in $\mathbb Z^2$-SFTs is coanalytic
complete.  Thus there is no simpler description of TCPE 
in $\mathbb Z^2$-SFTs than in the general case.  \end{abstract}

\maketitle

\section{Introduction}

Despite their similar definitions, the shifts of finite type (SFTs) over $\mathbb Z$ 
and the SFTs over $\mathbb Z^2$ often display very different properties.  
For example, there is an algorithm to determine whether a $\mathbb Z$-SFT 
is empty, but the corresponding problem for $\mathbb Z^2$-SFTs is 
undecidable \cite{Berger1966}.  Similarly, the possible entropies achievable 
by a $\mathbb Z$-SFT have an algebraic characterization 
(see e.g. \cite[Chapter 4]{LindMarcus1995}), but for a $\mathbb Z^2$-SFT, the 
possible entropies are exactly the non-negative numbers obtainable as 
the limit of \emph{computable} decreasing sequences of rationals 
\cite{HochmanMeyerovitch2010}.
The appearance of computation in both cases is explained in part by the original 
insight of Wang \cite{Wang1962} that an arbitrary Turing computation can be forced 
to appear in any symbolic tiling of the plane that obeys a precisely 
crafted finite set of local restrictions.  

The  
paper of Hochman and Meyerovitch revived the idea that 
superimposing these computations on an existing SFT allows the 
computations to ``read'' what is written on the existing configurations,
and eliminate any configurations which the algorithm deems 
unsatisfactory.  The effect is to forbid more (even infinitely many) 
patterns from the original SFT, at the cost of covering it
with computation graffiti.  It is not well-understood which 
classes of additional patterns can be forbidden in this way; Durand, Levin and Shen
\cite{DurandLevinShen2008} have found complexity-related restrictions. 
Also, the computation infrastructure has the potential to 
modify more properties of the original SFT beyond forbidding 
words.  For example, in \cite{HochmanMeyerovitch2010}, since 
they wanted to control entropy, they needed the computation to
contribute zero entropy to the SFT.  Thus while 
$\mathbb Z^2$-SFTs can demonstrate universal
behavior in some cases, it is not at all obvious when they will do so.

In a recent series of papers, Pavlov \cite{Pavlov2014,Pavlov2018} 
and Barbieri and Garc\'ia-Ramos \cite{BGR-TA} explored the 
property of topological completely positive entropy (TCPE) in 
$\mathbb Z^d$-SFTs.   Defined by 
Blanchard \cite{Blanchard1992} as a topological analog of the 
$K$-property for measurable dynamical systems, TCPE 
seems rather poorly behaved compared to the $K$-property.  
However, Pavlov was able to give a simple characterization of 
TCPE for $\mathbb Z$-SFTs.  The question then remained: will 
there also be a simple characterization for $\mathbb Z^d$-SFTs? 
In this paper we give a negative answer; TCPE is just as
complex in $\mathbb Z^2$-SFTs as it is in general topological 
dynamical systems.

By definition, a topological dynamical system has TCPE if 
all of its nontrivial topological factors have positive entropy.
Although we have not formally introduced these terms, it should 
be clear that this definition involves a quantification over infinite 
objects (the nontrivial topological factors).  Contrast this 
with, for example, the definition of a Cauchy sequence of real numbers: 
$(x_n)$ is Cauchy if for every rational $\varepsilon>0$ there 
is a natural number $N$ such that etc.  
Here all the quantifications involve finite objects.  A property 
that can be expressed using only quantifications 
over finite objects is called \emph{arithmetic} and such 
properties are typically easier to work with 
than properties which require a quantification over infinite 
objects in their definition.  

Roughly speaking (see the Preliminaries for precise definitions), 
a property is coanalytic, or $\Pi^1_1$, 
if it can be expressed using a
single universal quantification over infinite objects\footnote{technically, 
quantification over elements of a Polish space} and any amount 
of quantification over finite objects.  A property is $\Pi^1_1$-complete if it is 
universal among $\Pi^1_1$-properties.  If a property is $\Pi^1_1$-complete,
it has no arithmetic equivalent description.

In \cite{Pavlov2014} and \cite{Pavlov2018}, Pavlov introduced two 
arithmetic properties that a $\mathbb Z^d$-SFT $X$ could have. 
One of them, ZTCPE, he showed was strictly weaker than TCPE.
The other property implied TCPE, and he asked if it provided 
a characterization.  This question was answered in the negative 
by Barbieri and Garc\'ia-Ramos \cite{BGR-TA}, who constructed 
an explicit counterexample with $d=3$.  We generalize 
both of these results by showing that there is no arithmetic 
property that characterizes TCPE in the $\mathbb Z^2$-SFTs.

\begin{theorem}
The property of TCPE is $\Pi^1_1$-complete in the set of $\mathbb Z^2$-SFTs.
\end{theorem}

In the course of proving their result, Barbieri and Garc\'ia-Ramos defined 
an $\omega_1$-length hierarchy within TCPE 
which stratified TCPE into subclasses.  Their explicit counterexample was a 
$\mathbb Z^3$-SFT at level 3 of this hierarchy, and asked 
whether there could be a $\mathbb Z^2$-SFT at level 3.  
We answer this question in a quite general way.
Standard methods of effective descriptive set theory
imply that the TCPE rank of any $\mathbb Z^2$-SFT must 
be a \emph{computable ordinal}, that is, a countable ordinal $\alpha$ for which 
there is a computable linear ordering $R\subseteq \omega\times \omega$ 
whose order type is $\alpha$.  We show that this is the only restriction.
Below, $\omega_1^{ck}$ denotes the supremum of the computable ordinals.

\begin{theorem}\label{thm:2}
For any ordinal $\alpha<\omega_1^{ck}$, there is a 
$\mathbb Z^2$-SFT with TCPE rank $\alpha$.
\end{theorem}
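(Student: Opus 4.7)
The plan is to derive Theorem~\ref{thm:2} as a rank-preserving refinement of the construction behind the preceding $\Pi^1_1$-completeness theorem. Every ordinal $\alpha<\omega_1^{ck}$ is the rank of some computable well-founded tree $T\subseteq\omega^{<\omega}$, and the Barbieri--Garc\'ia-Ramos hierarchy inside TCPE is itself an ordinal rank associated with a $\Pi^1_1$-style inductive definition, so one naturally expects the completeness proof to factor through a reduction from computable well-founded trees to $\mathbb Z^2$-SFTs that preserves the respective ordinal ranks on each side.

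Given such a computable well-founded $T$, the goal is to produce a $\mathbb Z^2$-SFT $X_T$ whose TCPE rank is exactly the rank of $T$. I would use a Hochman--Meyerovitch style computation overlay \cite{HochmanMeyerovitch2010} on top of a base of positive-entropy configurations, arranged so that each node $\sigma\in T$ is associated with a subsystem $X_\sigma$ of $X_T$ and each child $\sigma i\in T$ appears as a proper factor $X_{\sigma i}$ of $X_\sigma$ obtained by quotienting out a zero-entropy computation layer. The key features to build in are: (a) the computation grids themselves contribute zero entropy, so they live inside the zero-entropy factor used at each stage of the hierarchy; (b) the underlying ``noise'' at each level has positive entropy, so TCPE is preserved all the way down; and (c) the nested structure of the grids mirrors $T$.

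Verification is then an induction on the rank of $\sigma$ in $T$. At a leaf, $X_\sigma$ has no nontrivial zero-entropy factor and sits at the base of the hierarchy. At a successor node, the zero-entropy-factor step of the hierarchy peels off exactly the computation layer that distinguishes the children, and the induction hypothesis delivers rank $\on{rk}(\sigma)$. At a limit node, the corresponding limit stage of the Barbieri--Garc\'ia-Ramos hierarchy simultaneously absorbs the contributions of all children. Taking $X_T = X_\emptyset$, and noting that computability of $T$ is precisely what allows the entire tower of grids to be specified by a finite set of local rules, then yields a $\mathbb Z^2$-SFT of TCPE rank exactly $\alpha$.

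The hardest part, as with all such encoding arguments, is rigidity: the computation overlay must not introduce \emph{unintended} factors that would either insert extra stages into the hierarchy (inflating the rank) or short-circuit the descent (collapsing it). Suitable synchronization markers and hierarchical self-similarity should force every topological factor map out of $X_\sigma$ to factor through the trivial system or through one of the designated $X_{\sigma i}$; checking this while simultaneously keeping enough positive-entropy noise at every level to witness TCPE on every nontrivial factor is the central subtlety. Assuming this rigidity can be arranged for the construction underlying the preceding theorem---as one expects, since its $\Pi^1_1$-completeness already implicitly requires ordinal control---Theorem~\ref{thm:2} follows.
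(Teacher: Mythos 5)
There is a genuine gap here, and it starts with a misreading of what the Barbieri--Garc\'ia-Ramos hierarchy is. The TCPE rank is not defined by iteratively ``peeling off'' zero-entropy factors: it is the least $\alpha$ at which the relations $E_\alpha$ --- obtained from the entropy-or-equal pair relation $E_1$ by alternately taking topological closures and transitive/symmetric closures --- exhaust $X^2$. Your inductive scheme, in which each child node $\sigma i$ of $T$ corresponds to a factor $X_{\sigma i}$ of $X_\sigma$ obtained by quotienting out a computation layer, is organized around an object (a descending chain of factors) that simply does not index the hierarchy, so the induction ``at a successor node the zero-entropy-factor step peels off exactly the computation layer'' has no counterpart in the actual definition of the rank. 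Worse, your feature (a) is self-defeating: if the computation grid lives inside a nontrivial zero-entropy factor of $X_T$, then $X_T$ has a nontrivial factor of entropy zero and therefore fails TCPE outright, so it has no TCPE rank at all. This is precisely the obstruction the paper must confront: any rigid tiling-based overlay yields such a factor, and the fix is Pavlov's ``fabric'' transformation, which deforms the grid so that the tiling structure itself carries entropy, and which upgrades transitivity pairs to entropy pairs while preserving the rank of the closure process.

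The actual mechanism controlling the rank is also absent from your proposal. The paper builds, for a tree $T\subseteq(2\Omega)^{<\omega}$, a shift $X_T$ of nested square patterns $A_\sigma,B_\sigma$ whose configurations carry a \emph{type} in $T^+\cup[T]$; the entropy-pair (really, transitivity-pair) relation connects only lexicographically adjacent types, and the rank of the closure process is shown to equal (up to the factor of $2$ coming from the alternation of the two closure operations) the Hausdorff rank of the lexicographic order $L_T$. Getting every ordinal $\alpha$ exactly then requires choosing $T$ to be a \emph{fat} well-founded tree of appropriate rank, plus separate tweaks to realize even ranks $2\beta$ and limit ranks --- none of which your ``rank of $X_T$ equals rank of $T$'' claim accounts for. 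Finally, the closing paragraph concedes that the rigidity needed to prevent unintended factors is merely ``assumed''; since that rigidity (via the fabric construction, trap zones for a fully supported measure, and the type/Hausdorff-rank analysis) is essentially the entire content of the proof, the proposal as written does not establish the theorem.
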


In fact, these two main theorems are closely related; a $\Pi^1_1$ set 
can always be decomposed into an ordinal hierarchy
of simpler subclasses.  Such hierarchy is called a $\Pi^1_1$ rank 
and a standard reference on the topic is \cite{kechris-book}.
Frequently, when all subclasses are populated, the 
same methods used to populate 
the hierarchy yield a proof of $\Pi^1_1$-completeness.  That 
has happened in this case.

Silvere Gangloff has kindly let us know of his progress on this problem:
he has independently constructed $\mathbb Z^2$-SFTs of rank $\alpha$ for each 
$\alpha < \omega^2$ by a different method \cite{Gangloff-PC}.  We have also learned
Ville Salo has recently constructed $\mathbb Z$-subshifts
of all TCPE ranks \cite{Salo-TA}, and conjectured our Theorem \ref{thm:2}.  
The author would also like to thank Sebasti\'an Barbieri
for interesting discussions on this topic.

\section{Preliminaries}

\subsection{Subshifts and TCPE}

Let $\Delta$ denote a finite alphabet. 
A $\mathbb Z^d$-\emph{subshift} is a subset of $\Delta^{\mathbb Z^d}$ that 
is topologically closed (in the product topology, where $\Delta$ has 
the discrete topology) and closed under the $d$-many shift operations 
and their inverses.  An element of $\Delta^S$ is also called a
\emph{configuration}.
A \emph{pattern} is an element of $\Delta^S$
where $S \subseteq \mathbb Z^d$ is a finite subset.  
A pattern $w$ \emph{appears in} a configuration $x$ if there is 
some $g\in \mathbb Z^d$ such that $w = x \uhr g^{-1}(S)$.  
A pair of patterns $w$ and $v$ \emph{coexist} in $x$ if they 
both appear in $x$.  If $g \in \mathbb Z^d$, and $w$ is 
a pattern or $x$ a configuration, let $gw$ and $gx$ denote
the corresponding shifted versions of $w$ and $x$, 
that is, $gx(h) = x(g^{-1}h)$ and $gw(h) = w(g^{-1}h)$.

A subshift $X \subseteq \Delta^{\mathbb Z^d}$ is completely characterized 
by the set of patterns which do not appear in any configuration of $X$. 
Conversely, for any set $F$ of patterns, the set
$$W_F := \{x \in \Delta^{\mathbb Z^d} : \text{ for all } w \in F, w \text{ does not appear in } x\}$$
is a subshift.  A subshift $X$ is called a \emph{shift of finite type} if 
$X = W_F$ for some finite set $F$ of forbidden patterns.

More generally, a \emph{$\mathbb Z^d$-topological dynamical system} (TDS)
is a pair $(X,T)$ where $X$ is compact separable metric space and $T$ is an action 
of $\mathbb Z^d$ on $X$ by homeomorphisms.  A $\mathbb Z^d$-subshift
is a special case of this.  If $(X,T)$ and $(Y,S)$ are two $\mathbb Z^d$-TDS, 
we say that $(Y,S)$ is a \emph{factor} of $(X,T)$ if there is a continuous 
onto function $f:X\rightarrow Y$ such that $Sf=fT$.  
A \emph{sofic} $\mathbb Z^d$-subshift is a $\mathbb Z^d$-subshift which 
is a factor of a $\mathbb Z^d$-SFT.

A TDS has 
\emph{topological completely positive entropy} if all of its nontrivial 
factors have positive entropy.  The unavoidable 
trivial factor is one where $Y$ consists of a single element only.  

For the purposes of 
this paper, we work almost entirely with an equivalent characterization of TCPE due to 
Blanchard \cite{Blanchard1993}.  This characterization makes use of Blanchard's 
local entropy theory and so a few definitions will be required.

If $\mathcal U$ is an open cover of a subshift
$X$, let $\mathcal U_n$ denote the open cover of $X$ which is the 
common refinement of the shifted covers $g^{-1}\mathcal U$ for $g \in [0,n)^d$,
\[
U_n = \bigvee_{g \in [0,n)^d} g^{-1}\mathcal U.
\]
Let $\mathcal N(\mathcal U_n)$ denote the smallest cardinality 
of a subcover of $\mathcal U_n$.  Then $\log \mathcal N(\mathcal U_n)$ 
can be thought of as the 
minimum number of bits needed to 
communicate, for each $x\in X$ and $g \in [0,n)^d$, an element of 
$\mathcal U$ containing $g^{-1}x$.
The \emph{topological entropy} of $X$ relative to $\mathcal U$ is
\[
h(X,\mathcal U) = \lim_{n\rightarrow \infty} \frac{\log\mathcal N(\mathcal U_n)}{n^d}
\]
A pair of elements $x,y \in X$ are an \emph{entropy pair} if 
$h(X, \{K_x^c, K_y^c\}) > 0$ for every disjoint pair of closed sets $K_x,K_y$ 
containing $x$ and $y$ respectively, where $K^c$ denotes the 
complement of $K$.  Blanchard \cite{Blanchard1993} proved the following theorem for 
$\mathbb Z$-topological dynamical systems.  (It also holds in the more general 
context of a $G$-topological dynamical system, where $G$ is a countable 
amenable group, but we do not need the more abstract formulation; 
it is stated in \cite[Theorem 2.4]{BGR-TA}.)  Here is the 
version we need.

\begin{theorem}[Blanchard]
A subshift $X$ has topological completely positive entropy if and only if 
the smallest closed equivalence relation on $X$ containing the entropy
pairs is all of $X^2$.
\end{theorem}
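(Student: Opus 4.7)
The plan is to prove both directions separately, using as technical inputs from Blanchard's local entropy theory two standard lemmas: (i) a TDS has positive topological entropy if and only if it contains a nontrivial entropy pair, and (ii) if $\phi : X \to Y$ is a factor map and $(y_1, y_2)$ is a nontrivial entropy pair of $Y$, then there exist $x_i \in \phi^{-1}(y_i)$ with $(x_1, x_2)$ an entropy pair of $X$.

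For the forward direction, suppose $X$ has TCPE and let $R$ be the smallest closed equivalence relation on $X$ containing the entropy pairs. Since the set of entropy pairs is $\mathbb Z^d$-invariant, so is $R$, and the quotient map $\pi : X \to X/R$ is a factor map onto a TDS. Assume for contradiction that $R \neq X^2$, so $X/R$ is nontrivial. By TCPE, $X/R$ has positive entropy, so lemma (i) yields a nontrivial entropy pair $(\bar x, \bar y)$ of $X/R$. Applying lemma (ii) to $\pi$ lifts it to an entropy pair $(x,y)$ of $X$ with $\pi(x) = \bar x \neq \bar y = \pi(y)$, whence $(x,y) \notin R$. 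This contradicts that $R$ contains every entropy pair of $X$.

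For the reverse direction, suppose the entropy pairs generate $X^2$ as a closed equivalence relation, and let $\phi : X \to Y$ be any nontrivial factor. The relation $R_\phi = \{(x,x') : \phi(x) = \phi(x')\}$ is a proper closed $\mathbb Z^d$-invariant equivalence relation, so by hypothesis there is some entropy pair $(x,x')$ of $X$ with $\phi(x) \neq \phi(x')$. Pick disjoint closed neighborhoods $L, L'$ of $\phi(x), \phi(x')$ in $Y$, and set $K = \phi^{-1}(L)$ and $K' = \phi^{-1}(L')$; these are disjoint closed sets containing $x$ and $x'$. Since $\phi$ is surjective and commutes with the shifts, a subcollection of $\{L^c, (L')^c\}$ covers $Y$ if and only if its $\phi$-pullback covers $X$, and the same equivalence holds after any shift-refinement, so $h(Y, \{L^c, (L')^c\}) = h(X, \{K^c, (K')^c\})$. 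The right-hand side is positive by the entropy pair property of $(x,x')$, so $Y$ has positive entropy.

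The main obstacle is lemma (ii), the lifting of entropy pairs. A direct transfer of the entropy lower bound from $Y$ to a chosen lift $(x_1, x_2)$ is obstructed by the fact that arbitrary closed neighborhoods of $x_1, x_2$ in $X$ need not contain the full fibers $\phi^{-1}(y_1), \phi^{-1}(y_2)$, so the pullback identity used in the reverse direction does not apply pointwise. The standard workaround is a compactness argument on the compact set $\phi^{-1}(y_1) \times \phi^{-1}(y_2)$: if no lift were an entropy pair, each point of this product would admit disjoint closed neighborhoods with zero relative entropy, a finite subcover of the product would exist by compactness, and combining these neighborhoods through $\phi$ would produce disjoint closed neighborhoods of $y_1, y_2$ in $Y$ of zero relative entropy, contradicting the entropy pair property of $(y_1, y_2)$. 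Lemma (i) follows from a similar pigeonhole-compactness argument applied to any open cover with positive relative entropy.
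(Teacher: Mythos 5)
The paper does not prove this statement: it is quoted as a theorem of Blanchard and cited to \cite{Blanchard1993}, so there is no in-paper argument to compare against. Your derivation is the standard one, and the reduction of the theorem to the two imported lemmas is correct. In the forward direction you rightly observe that $R$ is invariant (so that $X/R$ is a genuine factor; one should also note that the quotient of a compact metric space by a closed invariant equivalence relation is again a compact metrizable system), and the reverse direction's identity $h(Y,\mathcal V)=h(X,\phi^{-1}\mathcal V)$ for surjective equivariant $\phi$ is exactly right and needs nothing beyond the definition of entropy pair used in this paper.

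The weight of the theorem is carried entirely by the two lemmas you import, and your optional sketches of them are where care is needed. For the lifting lemma, two points are glossed over. First, with the definition of entropy pair used here (disjoint closed sets \emph{containing} $x$ and $y$, which is equivalent to positivity of $h$ for the single cover $\{\{x\}^c,\{y\}^c\}$), the negation of ``entropy pair'' hands you disjoint closed sets of zero relative entropy, not closed \emph{neighborhoods}; to cover the compact product $\phi^{-1}(y_1)\times\phi^{-1}(y_2)$ by open rectangles you need the neighborhood version, and passing between the two formulations is a true but not-quite-free equivalence. Second, the step ``combining these neighborhoods through $\phi$'' is exactly where the combinatorial engine of local entropy theory enters: one needs the subadditivity $h(\mathcal U\vee\mathcal V)\le h(\mathcal U)+h(\mathcal V)$ together with the observation that $\bigvee_{j,k}\{(F_1^{(j)})^c,(F_2^{(k)})^c\}$ refines $\{F_1^c,F_2^c\}$ when each $F_i$ is the union of the $F_i^{(\cdot)}$, in order to conclude that finitely many zero-entropy two-set covers force $h(X,\{F_1^c,F_2^c\})=0$ for $F_i=\phi^{-1}(y_i)$. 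A similar subadditivity argument (reducing a positive-entropy cover to a positive-entropy two-set cover) is what makes lemma (i) work, so calling it a pigeonhole-compactness argument understates it. None of this invalidates your proof, since you explicitly cite (i) and (ii) as known results of Blanchard's theory, but the sketches as written would not survive being asked to stand on their own.
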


A useful sufficient condition for the entropy pairhood of $x$ and $y$
 is the following.  Two patterns $w,v\in \Delta^S$ 
are \emph{independent} if there is a positive density subset 
$J\subseteq \mathbb Z^d$ such that for all $I \subseteq J$, there is 
some configuration $x \in X$ such that $x\uhr g^{-1}S = w$ for all 
$g \in I$ and $x \uhr g^{-1} = v$ for all $g\in J \setminus I$.  In English, 
$w$ and $v$ are independent if there is a positive density of locations 
such if we place $w$ or $v$ at each of those locations (free choice), 
regardless of our choices it is always possible to fill in the remaining 
symbols to get a valid configuration $x \in X$.  Observe that if $x\uhr S$ 
and $y\uhr S$ are independent patterns for all finite $S \subseteq \mathbb Z^d$, 
then $x$ and $y$ are an entropy-or-equal pair.

Barbieri and Garc\'ia-Ramos \cite{BGR-TA} defined the following hierarchy of 
closed relations and equivalence relations on $X$.  
They first define the set of \emph{entropy-or-equal pairs},  
$$E_1 = \{(x,y) \in X^2 : x=y \text{ or } (x,y) \text{ is an entropy pair}\},$$
and note that this set is closed.  At successor stages, define 
$$E_{\alpha+1} = \begin{cases} 
\text{ the topological closure of } E_\alpha & \text{ if $E_\alpha$ is not closed}\\
\text{ the transitive, symmetric closure of } E_\alpha & \text{ if $E_\alpha$ is not an equiv. rel'n}\\
E_\alpha & \text{ if $E_\alpha$ is a closed equiv. rel'n}\end{cases}$$
At limit stages, $E_\lambda = \cup_{\alpha < \lambda} E_\alpha$.  
They show that $X$ has TCPE if and only if $E_\alpha = X^2$ for some $\alpha$, 
and in this case they
define the \emph{TCPE rank} of $X$ to be the least $\alpha$ at which 
this occurs.  They construct a $\mathbb Z^3$-SFT of TCPE 
rank 3, and they ask whether this can be improved to a $\mathbb Z^2$-SFT.

\begin{question}[Barbieri \& Garc\'ia-Ramos]
Is there a $\mathbb Z^2$-SFT of TCPE rank 3?
\end{question}

Our Theorem \ref{thm:2} answers this question positively and 
then characterizes those TCPE ranks obtainable by 
$\mathbb Z^2$-SFTs to be exactly the computable ordinals.

\subsection{Effective descriptive set theory}
With the exception of Proposition \ref{prop:overflow} below,
we have attempted to make the paper self-contained with respect 
to effective descriptive set theory.  However, we have surely 
not succeeded completely in this, so we also refer the reader to the 
books \cite{kechris-book} on descriptive set theory and
\cite{sacks} on its computable (effective) aspects.

Let $\omega^\omega$ denote the space of all infinite sequences of 
natural numbers, with the product topology.  Most mathematical objects 
can be described or encoded in a natural way by elements of $\omega^\omega$.
A set $A \subseteq \omega^\omega$ is $\Pi^0_n$ if there is a 
computable predicate $P$ such that for all $x \in \omega^\omega$,
\[
x \in A \iff \forall m_1 \exists m_2 \dots Q m_n P(x,m_1,\dots,m_n)
\]
where each $m_i \in \omega$ (or in a set whose members are coded by 
elements of $\omega$) 
and $Q$ is either $\forall$ or $\exists$, depending on the parity of $n$.
For example, the set of all convergent sequences of rational numbers is $\Pi^0_3$
\[
(q_n)_{n\in\omega} \text{ converges } \iff (\forall \epsilon \in \mathbb Q) (\exists N) (\forall n, m )[n,m>N \implies |q_n-q_m| \leq \epsilon].
\]
A set $A\subseteq \omega^\omega$ is \emph{arithmetic} if it is $\Pi^0_n$ for some $n$.
A set $A \subseteq \omega^\omega$ is \emph{coanalytic}, or $\Pi^1_1$, if there is an 
arithmetic predicate $P$ such that for all $x \in \omega^\omega$,
\[
x\in A \iff (\forall y \in \omega^\omega)P(x,y)
\]
For example the property of TCPE is $\Pi^1_1$.  Let $K(X^2)$ denote the closed subsets of $X^2$
with the Hausdorff metric (appropriately encoded as a subset of $\omega^\omega$).  Then
\begin{multline*}
X \text{ has TCPE } \iff (\forall E \in K(X^2))[(\text{$E$ is an equivalence relation and}\\
\text{$E$ contains the entropy-or-equal pairs}) \implies E = X^2]
\end{multline*}

A \emph{tree} $T \subseteq \omega^{<\omega}$ is any set closed under taking initial segments.
For $\sigma, \tau \in \omega^{<\omega}$, we write $\sigma \prec \tau$ to indicate 
that $\sigma$ is a strict initial segment of $\tau$.  A string $\sigma \in T$ is called a \emph{leaf}
if there is no $\tau \in T$ with $\sigma \prec \tau$.  The empty string is denoted $\lambda$.
A \emph{path} through a tree $T$ is an infinite sequence $\rho \in \omega^\omega$, all 
of whose initial segments are in $T$.  The set of paths through $T$ is denoted $[T]$.
A tree $T$ is \emph{well-founded} if $[T]=\emptyset$.  Let $WF$ denote the set of 
well-founded trees, which is $\Pi^1_1$.
\[
T \in WF \iff \forall \rho \in \omega^\omega [\rho \text{ has some initial segment not in } T]
\]
A $\Pi^1_1$ set $A$ is called \emph{$\Pi^1_1$-complete}
if for every other $\Pi^1_1$ set $B$, there is a computable function $f$ such that for all $x \in \omega^\omega$,
\[
x \in B \iff f(x) \in A
\]
The set $WF$ is $\Pi^1_1$-complete.  No $\Pi^1_1$-complete set is arithmetic.  

So far we have discussed only the descriptive complexity of subsets of $\omega^\omega$. 
There is a miniature version of this theory for subsets of $\omega$ (and by extension, 
subsets of any collection of finitely-describable objects, such as SFTs).  A set 
$A \subseteq \omega$ is $\Pi^0_n$, arithmetic, $\Pi^1_1$, or $\Pi^1_1$-complete 
exactly when the same definitions written above are satisfied, with the only change 
being that the elements $x$ whose $A$-membership is being considered are now 
drawn from $\omega$ rather than $\omega^\omega$, and also $B \subseteq \omega$ 
in the definition of $\Pi^1_1$-complete.  No $\Pi^1_1$-complete subset of $\omega$ 
can be arithmetic either.

A tree $T \subseteq \omega^{<\omega}$ 
is computable if there is an algorithm which, given input $\sigma \in \omega^{<\omega}$, 
outputs 1 if $\sigma \in T$ and 0 otherwise.  An \emph{index} for a computable 
tree $T$ is a number $e \in \omega$ such that the $e$th algorithm in some canonical 
list computes $T$ in the sense described above.  The set of indices of computable 
well-founded trees is a canonical $\Pi^1_1$-complete subset of $\omega$.

On every $\Pi^1_1$ set, it is possible to define a \emph{$\Pi^1_1$ rank}, 
a function which maps elements 
of the set to an ordinal rank $<\omega_1$ in a uniform manner 
(for details see \cite{kechris-book}).  A natural $\Pi^1_1$ rank on well-founded treess 
$T$ is defined by induction as follows.  The rank of a leaf $\sigma \in T$ 
is $r_T(\sigma) = 1$.  For any non-leaf $\sigma \in T$, the rank of $\sigma$ is 
$r_T(\sigma) = \sup_{\tau \in T : \sigma \prec \tau} (r_T(\tau) +1)$.  The 
rank of $T$ is $r(T) = r_T(\lambda)$.
Colloquially, the rank of a well-founded tree 
is the ordinal 
number of leaf-removal operations needed to remove the entire tree.  

If $A\subseteq \omega^\omega$ is $\Pi^1_1$-complete, then for any $\Pi^1_1$
rank on $A$, the ranks of elements of $A$ are cofinal (unboundedly large) 
below $\omega_1$.
If $A \subseteq \omega$ is $\Pi^1_1$-complete, then since $A$ is countable, 
there must be some countable upper limit on the ranks of elements of $A$.
A countable ordinal $\alpha$ is \emph{computable} if 
there is a computable linear ordering $R\subseteq \omega\times \omega$ 
whose order type is $\alpha$.  The computable ordinals are also exactly those 
ordinals which can be the rank of a computable well-founded tree.
The supremum of all computable ordinals 
is denoted $\omega_1^{ck}$.  
If $A \subseteq \omega$ is $\Pi^1_1$-complete, 
then for any $\Pi^1_1$ rank on $A$,
the ranks of elements of $A$ are cofinal in $\omega_1^{ck}$.

Heuristically, a sort of converse holds.  If one can show that all 
countable (resp. computable) levels of a $\Pi^1_1$ hierarchy on a 
subset of $\omega^\omega$ (resp. $\omega$) are 
populated, typically one also has the tools to show that the set in 
question is $\Pi^1_1$-complete.

Barbieri and Garc\'ia-Ramos found topological dynamical systems 
at every level of the TCPE hierarchy, giving strong evidence for 
the following theorem (which will also be a side consequence of our 
methods).

\begin{theorem}
The set of $\mathbb Z^2$-TDS with TCPE is 
$\Pi^1_1$-complete, and the TCPE rank is a $\Pi^1_1$ rank on this set.
\end{theorem}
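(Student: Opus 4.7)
The plan is to establish the theorem in three parts: (i) TCPE on $\mathbb Z^2$-TDS is $\Pi^1_1$; (ii) it is $\Pi^1_1$-hard; (iii) TCPE rank is a $\Pi^1_1$-rank on this set.

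Part (i) is already flagged in the preliminaries as a direct consequence of Blanchard's characterization. With a standard coding of a TDS $(X,T)$ as an element of $\omega^\omega$, the hyperspace $K(X^2)$ is Polish and admits a code, and the matrix ``$E$ is a closed equivalence relation containing the entropy-or-equal pairs implies $E=X^2$'' is arithmetic once one unwinds the open-cover definition of entropy pairs. So TCPE has the form $\forall E \in K(X^2)\,[\,\text{arithmetic}\,]$, putting it in $\Pi^1_1$.

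Part (ii) is essentially free from Theorem 1. The computable reduction from $WF$ to $\mathbb Z^2$-SFTs with TCPE supplied by Theorem 1, when composed with the computable inclusion of the SFT codings (elements of $\omega$) into TDS codings (elements of $\omega^\omega$), yields a computable reduction $WF \to \mathbb Z^2$-TDS with TCPE. Combined with (i), this gives $\Pi^1_1$-completeness.

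For part (iii), I would verify the norm inequalities $\leq^*$ and $<^*$ associated to TCPE rank are $\Pi^1_1$. Uniformly in a code for a countable ordinal $\alpha$ and a code for $(X,T)$, the Barbieri--Garc\'ia-Ramos iterate $E_\alpha$ is a Borel subset of $X^2$, and the TCPE rank is the least $\alpha$ with $E_\alpha = X^2$. By standard results on $\Pi^1_1$-ranks arising from Borel-coded transfinite derivations (see \cite{kechris-book}), this yields the norm property. The main obstacle I anticipate is bookkeeping: the successor step of the $E_\alpha$ iteration splits into topological-closure and transitive-symmetric-closure cases, so one must package the derivation as a well-founded tree whose rank matches the TCPE rank up to a fixed constant, so that the canonical $\Pi^1_1$-rank on $WF$ transfers cleanly to TCPE rank.
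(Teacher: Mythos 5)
Your parts (i) and (iii) are broadly in the spirit of what the paper does, but part (ii) contains a genuine gap that invalidates the completeness claim as you have argued it. The theorem asserts $\Pi^1_1$-completeness for a class coded by elements of $\omega^\omega$, so you must exhibit a computable reduction from $WF\subseteq\omega^\omega$, the set of \emph{all} well-founded trees. The reduction supplied by the SFT theorem has domain $\omega$: it reduces the set of \emph{indices of computable} well-founded trees to SFT codes. No reduction of that shape can be upgraded: a computable (hence continuous) map from $\omega^\omega$ into $\omega$ has open preimages of points, so the preimage of any set of SFT codes is Borel, while $WF$ is not Borel; and a reduction whose domain is $\omega$ only witnesses hardness for $\Pi^1_1$ subsets of $\omega$. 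So composing ``the reduction from Theorem 1'' with the inclusion of SFT codes into TDS codes proves the wrong statement. The repair is exactly what the paper means when it calls this theorem ``a side consequence of our methods'': the map $T\mapsto X_T$ and Theorem \ref{thm:tcpe-hausdorff} carry no computability hypothesis on $T$, so given an arbitrary tree $S$ presented as an element of $\omega^\omega$ one fattens it to $T\subseteq(2\Omega)^{<\omega}$ and outputs a code for the subshift $X_T$ as a TDS; then $X_T$ has TCPE iff $L_T$ is scattered iff $S\in WF$, and the whole map is computable on reals.

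On part (iii): your plan names the right machinery but omits the one substantive ingredient the paper supplies, namely the boundedness result that the TCPE derivation of $(X,T)$ terminates below $\omega_1^{(X,T)}$ (Proposition \ref{prop:overflow}). The paper's corollary then expresses ``$\mathrm{rank}(X_1)\leq\mathrm{rank}(X_2)$'' both existentially and universally in terms of $E$-hierarchies indexed by notations in $\kO^\ast$ relativized to $X_2$, which is the standard two-sided criterion for a $\Pi^1_1$-rank. An appeal to general results on Borel transfinite derivations can substitute, but only if it comes with precisely such an overflow argument; ``packaging the derivation as a well-founded tree'' does not by itself yield the $\Sigma^1_1$ side of the comparison relation.
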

Here the arbitrary $\mathbb Z^d$-TDS are appropriately encoded 
using elements of $\omega^\omega$.  

Our main goal is to show that the situation is no simpler in $\mathbb Z^d$-SFTs, 
which are appropriately encoded using elements of $\omega$.

\begin{theorem}
The set of $\mathbb Z^2$-SFTs with TCPE is 
$\Pi^1_1$-complete, and the TCPE rank is a $\Pi^1_1$ rank on this set.
\end{theorem}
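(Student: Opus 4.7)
The plan is to exhibit a uniform computable rank-preserving reduction from the $\Pi^1_1$-complete set of indices of well-founded computable trees to the class of $\mathbb Z^2$-SFTs. Concretely, given an index for a computable tree $T \subseteq \omega^{<\omega}$, I would construct (uniformly in this index) a $\mathbb Z^2$-SFT $X_T$ satisfying (i) $X_T$ has TCPE if and only if $T \in WF$, and (ii) whenever $T \in WF$, the TCPE rank of $X_T$ is $r(T)$, or a uniformly computed ordinal function of $r(T)$ with the same cofinal behaviour. Granting such a construction, $\Pi^1_1$-hardness of the TCPE class in $\mathbb Z^2$-SFTs is immediate, while the matching $\Pi^1_1$ upper bound is the formula already given in the Preliminaries; and the TCPE rank becomes a $\Pi^1_1$-rank by pulling back the standard leaf-removal rank on $WF$, which is the prototypical $\Pi^1_1$-rank. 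In particular Theorem \ref{thm:2} is recovered as the special case that every computable ordinal is realised as some $r(T)$.

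For the construction itself I would follow the Hochman--Meyerovitch paradigm: on a self-similar hierarchical Wang tiling of $\mathbb Z^2$ whose cells grow rapidly with level (say, Robinson-like or fixed-point style), embed computation boxes that at level $n$ enumerate the nodes of $T$ of length at most $n$. Configurations carry, at each hierarchical level, a ``node label'' $\sigma \in T$ together with independent low-level ``filler'' symbols, constrained by local rules to be nested consistently along the hierarchy and to correspond to genuine finite branches of $T$. The entropy-or-equal relation $E_1$ arises from pairs of configurations that share the hierarchical skeleton and node labels but differ freely in filler content, using the independence criterion stated in the Preliminaries. The passage from $E_\alpha$ to $E_{\alpha+1}$ through the closure and transitive-symmetric operations is engineered to mimic one step of leaf pruning on $T$: a pair of configurations whose disagreement in node labels occurs only at $T$-nodes of rank $\leq \alpha$ becomes $E_{\alpha+1}$-equivalent, while more deeply disagreeing pairs must wait for further stages.

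The main obstacle will be calibrating this correspondence on both sides simultaneously. One has to build \emph{enough} independence into the SFT, via Wang-style free parameters, for matched pairs of patterns to witness entropy pairs; yet simultaneously \emph{not too much}, so that configurations differing at a not-yet-pruned node of $T$ really remain $E_\alpha$-unrelated. This two-sided rigidity is the genuinely new difficulty beyond \cite{HochmanMeyerovitch2010} and \cite{BGR-TA} and will require a nearly rigid hierarchical encoding together with extra synchronising constraints between levels, so that the only independence available at level $n$ is the one needed to connect pairs whose $T$-rank is sufficiently small. When $T$ is ill-founded, any infinite branch should produce a nontrivial zero-entropy topological factor of $X_T$, obstructing TCPE and completing the equivalence. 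Once this construction and its ranking behaviour are verified, both halves of the theorem fall out of the framework described in the first paragraph.
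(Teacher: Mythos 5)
Your overall strategy --- a uniform reduction from well-founded computable trees, with the $\Pi^1_1$ upper bound supplied by the definability formula from the Preliminaries and the rank pulled back from the leaf-removal rank on $WF$ --- is the same skeleton the paper uses, and your identification of the independence-versus-rigidity tension is the right instinct. But there are two genuine gaps in the construction you sketch. First, a ``nearly rigid hierarchical encoding'' on a self-similar Wang tiling is self-defeating: erasing everything except the hierarchical skeleton is a continuous shift-commuting map onto a nontrivial zero-entropy system, so any SFT containing such a rigid grid fails TCPE for \emph{every} $T$, well-founded or not. The paper has to spend an entire subsection neutralizing exactly this obstruction, by printing the construction on ``fabric'' (Pavlov's deformation $f$ of Theorem \ref{thm:pavlov-modified}) so that the tiling skeleton itself bears entropy and transitivity pairs are upgraded to entropy pairs; without some such device your reduction cannot produce a single TCPE example.

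Second, your claim that when $T$ is ill-founded ``any infinite branch should produce a nontrivial zero-entropy topological factor'' is not something a construction of this shape will deliver. A single infinite branch contributes only a thin set of limit configurations, and these are typically absorbed by the topological-closure steps of the $E_\alpha$ hierarchy; the paper notes explicitly that its $Y_T$ can have TCPE for some ill-founded $T$ (for instance a tree with exactly one path). What actually controls TCPE there is whether the lexicographic order $L_T$ on the set of types (essentially $T \cup [T]$) is scattered, and the reduction only works after first \emph{fattening} the input tree, so that ill-foundedness forces $[T]$ to contain a Cantor set and hence an order-embedded copy of $\mathbb Q$ in $L_T$. Relatedly, the rank bookkeeping is not ``one closure step per leaf pruning'': each Hausdorff-derivative step of $L_T$ costs two steps of the TCPE hierarchy (a topological closure followed by a transitive/symmetric closure), so the TCPE rank comes out as $2\beta-1$ or $2\beta$ where $\beta$ is the Hausdorff rank of $L_T$, not as $r(T)$ itself. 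These are not cosmetic calibration issues to be fixed at the end; they are where the main work of the paper lives.
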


The first step to proving that the TCPE rank is a $\Pi^1_1$ rank is to 
show that every $\mathbb Z^2$-SFT which has TCPE has a computable 
ordinal rank.  This proof is standard but does assume more familiarity with effective 
descriptive set theory than what was outlined in this introduction. 
The standard reference is \cite{sacks}.

\begin{prop}\label{prop:overflow}
If a $\mathbb Z^d$-TDS $(X,T)$ has TCPE, its TCPE rank is less than $\omega_1^{(X,T)}$.
\end{prop}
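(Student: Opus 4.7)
The plan is to apply the relativized Spector $\Sigma^1_1$-boundedness theorem (see \cite[Chapter III]{sacks}); this is exactly the ``$\Pi^1_1$-overflow'' principle the label anticipates. Let $a \in \omega^\omega$ be a code for $(X,T)$, and represent closed subsets of $X^2$ by suitable codes arising from a fixed countable basis of $X$.

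In such a coding, each step of the $E$-hierarchy (topological closure, transitive-symmetric closure, identity, or limit union) is an arithmetic operation on codes, and the base set $E_1$ is arithmetic in $a$: the universal quantifier over closed sets $K_x, K_y$ in the entropy pair definition reduces to one over basic opens, and ``$h(X,\mathcal U) > 0$'' for a fixed open cover is arithmetic. Hence, given a linear order $\prec$ on $\omega$ and a function $f \colon \omega \to K(X^2)$, the predicate
\[
\Phi(\prec, f, a) \equiv \text{``$f$ correctly implements the $E$-hierarchy along $\prec$ and stabilizes at $X^2$''}
\]
is arithmetic in its arguments. It follows that $B := \{\prec : \prec \text{ is a linear order on } \omega \text{ and } \exists f\, \Phi(\prec, f, a)\}$ is $\Sigma^1_1(a)$. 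Moreover, any $\prec \in B$ is well-founded, since a successful transfinite recursion along $\prec$ rules out infinite descending sequences. Thus $B \subseteq WO$.

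By Spector boundedness, every $\prec \in B$ satisfies $|\prec| < \omega_1^a = \omega_1^{(X,T)}$. Now if $(X,T)$ has TCPE with rank $\alpha$, then $E_\alpha = X^2$; since $\alpha$ is countable, pick any linear order $\prec$ of type $\alpha$, and transfinite recursion along $\prec$ produces a valid $f$, placing $\prec \in B$. Therefore $\alpha = |\prec| < \omega_1^{(X,T)}$. The main technical work, and the only real obstacle, is in verifying the coding: that each operation on $K(X^2)$ is arithmetic in the chosen presentation, and that $E_1$ genuinely descends to an arithmetic predicate via the standard basis reduction. Both are routine and amply treated in \cite{sacks}.
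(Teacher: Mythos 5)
Your overall instinct---get the bound from $\Sigma^1_1$-boundedness, after checking that the base relation $E_1$ and each step of the hierarchy are arithmetic in a code for $(X,T)$---is the right family of techniques, and those arithmeticity checks do match what the paper uses. But there is a genuine gap at the pivotal step: the claim that $B \subseteq WO$ is false, and the one-sentence justification (``a successful transfinite recursion along $\prec$ rules out infinite descending sequences'') is exactly the point at issue. The conditions defining an $E$-hierarchy along $\prec$ are \emph{local} consistency conditions on $f$ (base case, successor = double closure of the predecessor, limit = closure of the union of predecessors); they are satisfiable on many ill-founded orders. Concretely: let $\alpha$ be the true TCPE rank (which exists and is countable by a cardinality argument, prior to any effectivity), and let $\prec$ be a linear order of type $(\alpha+1)+\mathbb Q$. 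Define $f$ to be the genuine hierarchy on the initial segment of type $\alpha+1$ and $f(c)=X^2$ for every $c$ in the $\mathbb Q$-tail; every such $c$ is a limit point of $\prec$, and the closure of a union of sets including $X^2$ is $X^2$, so $\Phi(\prec,f,a)$ holds. Thus $\prec\in B\setminus WO$, Spector boundedness does not apply to $B$, and the proof collapses. (Strengthening $\Phi$ to demand $f(b)\neq X^2$ strictly below the top does not save you: the paper's remark that ``the definition of an $E$-hierarchy overflows to computable pseudo-wellorders'' is precisely the statement that such hierarchies also live on ill-founded orders, and indeed if they did not, your $B$ would give a $\Sigma^1_1$ definition of a $\Pi^1_1$-complete set.)

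The paper's proof is built around this obstruction rather than over it. It argues by contradiction: assuming the rank is at least $\omega_1^{ck}$, it splits into two cases according to whether $E_{\omega_1^{ck}+1}=X^2$, and in each case produces a $\Sigma^1_1$ definition of Kleene's $\mathcal O$ of the form ``$a\in\mathcal O^\ast$ and some $E$-hierarchy on $\{b: b\leq_{\mathcal O} a\}$ satisfies a suitable side condition'' (either $Y^{[a]}\neq X^2$, or $Y^{[a]}\cap U=\emptyset$ for a basic open $U$ separating $E_{\omega_1^{ck}+1}$ from $E_{\omega_1^{ck}+3}$). The correctness of that definition uses the structure of pseudo-notations: for $a^\ast\in\mathcal O^\ast\setminus\mathcal O$ there is $b^\ast\in\mathcal O^\ast\setminus\mathcal O$ below it, which forces $Y^{[a^\ast]}$ to have already absorbed $E_{\omega_1^{ck}+3}$, so the side condition fails. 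Contradiction with the $\Pi^1_1$-completeness of $\mathcal O$ then yields the bound. To repair your write-up you would need to replace the set $B$ with an argument of this shape (or otherwise engage with $\mathcal O^\ast$); as written, the reduction to boundedness does not go through.
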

\begin{proof} To reduce clutter we prove the theorem for computable $(X,T)$;
the reader can check that the proof relativizes.
Recall that $E_\alpha$ is closed whenever $\alpha$ is odd.  If $L$ is a
computable well-order on $\omega\times \omega$ with least element $1$, 
we say that $Y$ is an $E$-hierarchy on $L$ if 
\begin{itemize}
\item $Y^{[1]}$ codes 
$E_1$ as a closed set, 
\item If $b<_Lc$ are successors in $L$, then $Y^{[c]}$ encodes the topological closure of the 
transitive/symmetric closure of $Y^{[b]}$ and
\item If $c$ is a limit in $L$ then $Y^{[c]}$ encodes the topological closure of the union of 
the sets coded by $Y^{[b]}$ for all $b<_L c$.
\end{itemize}  The definition of $Y^{[c]}$ from $\{Y^{[b]} : b <_L c\}$ is arithmetic 
and the definition of an $E$-hierarchy overflows to computable pseudo-wellorders.

Suppose for the sake of contradiction that the TCPE rank of $X$ is at least 
$\omega_1^{ck}$.    If $E_{\omega_1^{ck}+1} = X^2$,
we would have the following 
$\Sigma^1_1$ definition of $\mathcal O$.
\begin{multline*}
a \in \mathcal O \iff a \in {\kO^\ast} \text{ and }\\\exists Y( Y \text{ is an $E$-hierarchy on $\{b : b \leq_\kO a\}$ and $Y^{[a]} \neq X^2$})
\end{multline*}
This is a contradiction since $\kO$ is $\Pi^1_1$-complete.  Similarly, 
if $E_{\omega_1^{ck}+1} \neq X^2$, then since $X$ has TCPE, 
the next closed set $E_{\omega_1^{ck}+3}$ is strictly larger than $E_{\omega_1^{ck}+1}$.
Let $U \subseteq X^2$ be a basic open set such that $E_{\omega_1^{ck}+3}\cap U \neq \emptyset$
but $E_{\omega_1^{ck}+1} \cap U = \emptyset$.  In this case we could also define $\kO$ by
\begin{multline*}
a \in \mathcal O \iff a \in {\kO^\ast} \text{ and }\\\exists Y( Y \text{ is an $E$-hierarchy on $\{b : b \leq_\kO a\}$ and $Y^{[a]} \cap U = \emptyset$})
\end{multline*}
This provides a $\Sigma^1_1$ definition of $\kO$, for if $a^\ast \in \kO^\ast \setminus \kO$, 
there is some $b^\ast \in \kO^\ast \setminus \kO$ with $b^\ast <_\kO a^\ast$.  
Then $E_{\omega_1^{ck}+1}$ is a subset of $Y^{[b^\ast]}$, so 
$E_{\omega_1^{ck}+3}$ is a subset of $Y^{[a^\ast]}$, and thus 
$Y^{[a^\ast]} \cap U \neq \emptyset$.  Again, contradiction.  Therefore, the TCPE 
rank of $X$ is less than $\omega_1^{ck}$.
\end{proof}

\begin{corollary}
If a $\mathbb Z^d$-SFT has TCPE, then its TCPE rank is a computable ordinal.
\end{corollary}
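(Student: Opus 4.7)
The plan is to deduce this corollary immediately from Proposition \ref{prop:overflow}. The key observation is that a $\mathbb Z^d$-SFT is specified entirely by finite data, namely its alphabet $\Delta$ and a finite set $F$ of forbidden patterns. Thus the SFT, viewed as a $\mathbb Z^d$-topological dynamical system $(X,T)$, is computable in the sense of effective descriptive set theory: membership in $X$, the metric on $X$, and the shift action $T$ are all uniformly computable from $(\Delta,F)$.

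Next I would note that for a computable TDS $(X,T)$, the relativized Church-Kleene ordinal $\omega_1^{(X,T)}$ equals $\omega_1^{ck}$, since relativizing to a computable oracle yields no new computable ordinals. Applying Proposition \ref{prop:overflow} to $(X,T)$ then gives that the TCPE rank of $X$ is strictly less than $\omega_1^{ck}$, and by definition every ordinal below $\omega_1^{ck}$ is computable, completing the proof.

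There is no real obstacle here; the corollary is essentially a restatement of Proposition \ref{prop:overflow} in the special case where the TDS is presented by finite combinatorial data. The only minor thing to verify is the uniformity of the encoding of an SFT as a computable TDS, but this is standard: one may take $X \subseteq \Delta^{\mathbb Z^d}$ with the product metric, and computability of $(X,T)$ follows because one can decide, given a pattern on a finite window, whether it avoids all shifts of the forbidden patterns.
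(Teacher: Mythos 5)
Your proof is correct and follows exactly the route the paper intends: the corollary is stated without proof precisely because it is the specialization of Proposition \ref{prop:overflow} to a computable $(X,T)$, where $\omega_1^{(X,T)}=\omega_1^{ck}$ and every ordinal below $\omega_1^{ck}$ is computable. Your added remark on the uniform computability of the SFT presentation is the only detail worth checking, and you handle it correctly.
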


\begin{corollary}
The TCPE rank is a $\Pi^1_1$-rank on the set of $\mathbb Z^d$-TDS 
and the set of $\mathbb Z^d$-SFTs.
\end{corollary}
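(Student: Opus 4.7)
The plan is to verify the definition of a $\Pi^1_1$-rank for $\varphi(X) := \mathrm{rank}_{\mathrm{TCPE}}(X)$ on the $\Pi^1_1$ set $A$ of TDS with TCPE: I must show that the comparison relations $\leq^*_\varphi$ and $<^*_\varphi$, extended to the ambient encoding space by treating systems outside $A$ as having rank $\infty$, are both $\Pi^1_1$.

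The strategy is to package the $E$-hierarchy machinery from Proposition \ref{prop:overflow} as an explicit $\Sigma^1_1$ witness for the failure of these relations. Given codes for $X$, $Y$ and a linear order $L$ on $\omega$ with a top element $\top_L$, the assertion that ``$H$ is an $E$-hierarchy on $L$ for $X$'' (encoding each level as a closed subset of $X^2$ and satisfying the successor, limit, and closure axioms spelled out in Proposition \ref{prop:overflow}) is arithmetic in $(L, X, H)$. I would then set
\[
X \leq^*_\varphi Y \iff X \in A \text{ and } \neg \exists (L, H_X, H_Y)\, W(L, H_X, H_Y, X, Y),
\]
where $W$ is the arithmetic predicate asserting that $L$ is a linear order on $\omega$ with top $\top_L$, that $H_X$ and $H_Y$ are $E$-hierarchies on $L$ for $X$ and $Y$ respectively, that $H_Y^{[\top_L]} = Y^2$, and that $H_X^{[c]} \neq X^2$ for every $c \in L$. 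The negated existential is $\Pi^1_1$ and $X \in A$ is $\Pi^1_1$, so their conjunction is $\Pi^1_1$. The analogous definition of $<^*_\varphi$ replaces the witness by one in which $H_X^{[\top_L]} = X^2$ while $H_Y^{[c]} = Y^2$ for some $c \in L$.

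The main obstacle will be correctness of this characterization, which reduces to handling pseudo-well-orders. When $L$ is genuinely well-founded, the correspondence between $W$-witnesses and ordinal comparison of ranks is immediate from the definition of the $E$-hierarchy: the true hierarchies witness $W$ exactly when $\varphi(Y) < \varphi(X)$. When $L$ is merely a pseudo-well-order, a ``spurious'' $W$-witness would enable a $\Sigma^1_1$ definition of $\kO$ relative to $(X,Y)$ along the lines of the overflow argument in Proposition \ref{prop:overflow}, contradicting the $\Pi^1_1$-completeness of $\kO$; hence no such spurious witnesses exist. The corollary for SFTs then follows automatically since each SFT is coded by a single natural number, so the identical argument goes through in the miniature setting for subsets of $\omega$.
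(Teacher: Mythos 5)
There is a genuine gap, and it is located exactly where you wave at it: the handling of ill-founded $L$. Your witness predicate $W$ quantifies over \emph{arbitrary} linear orders on $\omega$ with a top element, but the conditions defining an $E$-hierarchy are purely local (each $H^{[c]}$ is constrained only by its immediate predecessor, or by the union over $\{b : b <_L c\}$). On an ill-founded order these conditions massively underdetermine the hierarchy and admit degenerate solutions that are completely disconnected from the true $E_\alpha$'s. Concretely, take $L = 1 + \mathbb Z + 1$: every element of the $\mathbb Z$-block is a successor whose predecessor is also in the $\mathbb Z$-block, so nothing ties the values there back to $H^{[1]} = E_1$. Since any closed equivalence relation is a fixed point of the closure operation, one may set $H_X^{[c]} = \Delta_X$ (the diagonal) on the whole $\mathbb Z$-block, giving $H_X^{[\top]} = \overline{E_1^X}$ and $H_X^{[c]} \neq X^2$ everywhere as soon as $X$ has TCPE rank $\geq 3$; and one may set $H_Y^{[c]} = Y^2$ on the $\mathbb Z$-block, giving $H_Y^{[\top]} = Y^2$ for \emph{every} $Y$, with or without TCPE and regardless of its rank. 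So $W$ has a witness for every pair with $\operatorname{rank}(X)\geq 3$, and your formula declares $X \not\leq^*_\varphi Y$ even when, say, $X$ has rank $3$ and $Y$ has rank $5$. Your proposed rescue --- that a spurious witness would yield a $\Sigma^1_1$ definition of $\kO$ --- does not apply, because an arbitrary ill-founded linear order has no connection to $\kO$; the overflow argument of Proposition \ref{prop:overflow} exploits the specific structure of the orders $\{b : b \leq_{\kO^\ast} a\}$ (every notation lies finitely many successor steps above $1$ or a limit notation, and every nonstandard limit notation has the entire standard part, of type $\omega_1^{ck}$, beneath it), which forces any hierarchy to absorb $E_{\omega_1^{ck}}$, hence to equal $X^2$, at all nonstandard levels when $X$ has TCPE. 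That rigidity is exactly what is missing for general $L$.

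The fix is the one the paper uses: restrict the witness to orders coded by notations $a \in (\kO^\ast)^{X_2}$ (relativized appropriately), so that ill-founded witnesses are either impossible or harmless by the overflow argument. The paper also packages the result slightly differently --- rather than showing $\leq^*_\varphi$ and $<^*_\varphi$ are $\Pi^1_1$ directly, it exhibits a $\Sigma^1_1$ condition and a $\Pi^1_1$ condition that both define rank comparison whenever the second system has TCPE, which is the standard equivalent criterion for a $\Pi^1_1$-rank. Your overall architecture (reduce to exhibiting arithmetic hierarchy-witnesses and control the pseudo-well-order case) is the right one, but the quantifier must range over $\kO^\ast$-pseudo-well-orders, not over all linear orders.
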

\begin{proof}
If $X_1$ and $X_2$ are $\mathbb Z^d$-TDS or $\mathbb Z^d$-SFTs and $X_2$ has TCPE, 
then the following are equivalent:
\begin{enumerate}
\item The TCPE rank of $X_1$ is less than or equal to the TCPE rank of $X_2$.
\item There is an $a \in (\kO^\ast)^{X_2}$ and $E$-hierarchies $Y_1$ and $Y_2$
(for $X_1$ and $X_2$ respectively) on 
$a$ such that $Y_1^{[a]} = X_1^2$
and $Y_2^{[b]} \neq X_2^2$ for any $b <_\kO^{X_2} a$.
\item For all $a \in (\kO^\ast)^{X_2}$ and all $E$-hierarchies $Y_1$ and $Y_2$ on $a$, 
if $Y_1^{[a]} = X_1^2$ then $Y_2^{[a]} = X_2^2$.
\end{enumerate}
Proposition \ref{prop:overflow} guarantees that it is safe to use $\kO^\ast$ 
in two places where we wanted to use $\kO$, but could not.
\end{proof}

\subsection{SFT computation}  This section introduces the main technical 
tool used in this paper, the tiling-based SFT computation framework of 
Durand, Romashchenko and Shen \cite{DurandRomashchenkoShen2012}.
A more motivated and 
detailed description of that framework can be found in their 
original paper.  We also mention that \cite[Chapter 3, Chapter 7.1-2]{SipserITC}
contain a good technical introduction to Turing machines and polynomial 
time complexity, and we assume the reader is fluent in this topic.
Here we give a general overview of the ideas 
and terminology of the DRS construction, 
followed by a more technical description of 
their basic module, which will serve as the basis for our constructions.  

A \emph{Wang tile} is a square with colored sides.  Two Wang tiles 
may be placed next to each other if they have the same color on the side 
that they share.  We do not rotate the tiles.  A \emph{tileset} is a finite 
collection of Wang tiles.  Given a finite tileset, the collection of infinite tilings of 
the plane which can be made
with that tileset is a $\mathbb Z^2$-SFT.  From here on we refer to 
infinite tilings of the plane as configurations.
Wang \cite{Wang1962} described a method for turning any Turing machine 
into a tileset such that any configuration which contains a special 
\emph{anchor tile} is also forced to contain a literal picture of
the space-time diagram of an infinite run of the Turing machine.  If the Turing 
machine runs forever, the tiling can go on forever; if the Turing machine 
halts, there is no configuration because there is no way to continue the tiling.  

The anchor tile contains the 
head of the Turing machine and the start of the tape.  If we would like 
to force computations to appear in every configuration, we must require 
the anchor tile to appear in every configuration.  By compactness,
the only way to do this in a subshift is to require the anchor tile to appear 
with positive density.  This means that many computations go on simultaneously.
It is a technical challenge to organize the infinitely many computations 
so they do not interfere with each other, and to guarantee that the algorithm 
gets enough time to run.  This challenge was first solved by Berger \cite{Berger1966} 
with an intricate fractal construction that was subsequently simplified 
by Robinson \cite{Robinson1971}.  Several other solutions have occurred 
over the years, including the one in \cite{DurandRomashchenkoShen2012}
which is used in this paper.

In the DRS system, tiles use a \emph{location part} of their colors to arrange 
themselves into an $N\times N$ grid pattern for some large $N$.  
Central to each $N\times N$ region, there is a \emph{computation zone};
tiles in this zone must participate in building a space-time diagram 
(and one in particular must host the anchor tile).  Simultaneously, 
the entire $N\times N$ region could itself be considered as a huge tile, 
or \emph{macrotile}.  The \emph{macrocolors} of the macrotile 
are whatever color combinations appear on the boundary of the $N\times N$ 
region.  To control what kind of tileset is realized by the macrotiles, 
tiles use a \emph{wire part} of their colors to transport the bits displayed 
on the outside of the macrotile onto the input tape of the computation zone. 
The algorithm reads the color combination and makes the determination 
whether this kind of macrotile will be allowed (halting if the color combination 
is unsatisfactory).  By design, 
the algorithmic winnowing forces the macrotiles to belong 
to a tileset that is very similar to the original tileset, but with one change:
$N$ is increased so that the algorithm at the next level gets more time to run.
In essence, the algorithm copies its own source code (with the one change in $N$) 
up to the next level.  Then with 
any time left over after checking the color combinations, the algorithm 
can use to do arbitrary other computations (possibly halting for other reasons).

So the tiles organize into macrotiles, the macrotiles organize into 
macromacrotiles, and so on.  When talking about adjacent layers of macrotile, 
we refer to the smaller macrotiles as the \emph{children}.  
An $N\times N$ group of child tiles make up a \emph{parent} tile.  
Two adjacent tiles at the same level are \emph{neighbors}.  If 
two child tiles belong to the same parent tile we call 
the child tiles \emph{siblings}.  The smallest macrotiles (the original 
tileset) are called \emph{pixel} tiles.

The computations which we cause the SFTs to perform in this paper 
are best understood by starting with a simple module, to which 
we add more and more features to obtain more general results. 
We begin here with the most basic module, which simply 
demonstrates the undecidability of the SFT emptiness 
problem using the DRS framework.  

\subsubsection{Basic DRS module}

Given an index $e_0$, we produce a $\mathbb Z^2$-SFT that is 
empty if and only if the $e_0$th Turing machine halts
on empty input.

Fix the zoom sequence $N_i = 2^i$.

Fix a universal Turing machine with binary tape alphabet.  
We can assume that the universal 
machine operates as follows.  It reads the tape up until the 
first 1 and interprets the number of 0's it saw until that point
as the program number.   Then it applies that program to the 
entire original contents of the tape.  In this way the program number itself 
is accessible to the computation.

In the basic module, a \emph{level $i$ macrocolor} is a binary string of length 
$s_i=2\log N_i + 2 + \log k$, where $k$ is the number of colors 
needed to implement a tileset whose configurations
simulate computations from the universal Turing machine,
provided they contain the anchor tile.  Here there will 
be $2\log N_i$ bits for the 
location part, 2 bits for the wire part, and $\log k$ bits 
for the computation part.

Let $A(t)$ be the following algorithm, where $t$ is a binary string input:
\begin{enumerate}
\item (Parsing) 
\begin{enumerate}
\item Start reading $t = 0^e10^i1\dots$, and check that $e<i$.
\item Compute $s_{i}$ and check that the rest of $t$ 
has length $4s_{i}$; interpret the rest of $t$ as colors $c_1,\dots,c_4$.
\end{enumerate}
\item (Consistency) 
\begin{enumerate}
\item Check all four colors have compatible location $(x,y)$% (see Figure \cite{fig:location})
\item Based on the location, check that wires and/or computation 
parts either appear or not, as appropriate.  Set up the $s_{i+1}$-width
wires so that they 
deliver the colors to the computation zone starting at location $(e+i+2)$ of the parent 
tape. %(see Figure \cite{fig:layout})
\item If wire and/or computation bits are used, check they are a valid combination.
%(see Figure \cite{fig:compatible})
\end{enumerate}
\item (Sync Levels) If $(x,y)$ can view the $n$th bit of the parent's tape, check:
\begin{enumerate}
\item If $n<e+i+2$, check the tape has 1 on it if $n=e$ or $n=e+i+2$; otherwise
check the tape has 0 on it.  (This makes the parent computation have $0^e10^{i+1}1\dots$, 
so the parent knows it is at the next level, which is $i+1$)
\item Otherwise, if $n\geq e+ i + 2 + 4s_{i+1}$, check the parent tape is blank.
\end{enumerate}
\item Halt if any checks above fail.
\item Simulate Turing machine $e_0$ on empty input.
\end{enumerate}

All steps except the last one take time at most $\poly(\log N_{i})$, 
equivalently in our case to $\poly(i)$.
Also, the universal machine simulation process only adds 
polynomial overhead.  Therefore, for any $e<i$ and any
 $t$ that begins 
with $0^e10^i1\dots$, the universal machine completes the first 
four steps within $\poly(i)$ time.

Fix $e$ to be the Turing machine index of the algorithm $A$ above.  Let 
$i_0>e$ be large enough that for all $i\geq i_0$ and all inputs $t$
beginning with $0^e10^i1\dots$, steps 1-4 are completed within 
$N_{i-1}/4$ steps on the universal machine.  (Here $N_{i-1}/2$ 
is the size of the computation zone in the macrotile that will be
 running this computation.)

For $i\geq i_0$, let $T_{i}$ be the tileset 
$$T_{i} = \{(c_1,c_2,c_3,c_4) : A(0^e10^i1c_1c_2c_3c_4) \text{ runs at least $N_{i-1}/2$ steps}\}$$

Observe that $i$ is large enough to permit the deterministic 
construction of a 
valid layout of $s_{i+1}$-thickness wires in each $N_i\times N_i$ parent 
tile, and large enough that if any of the checks fail, they fail within 
$N_{i-1}/4$ steps.  Therefore, provided Turing machine $e_0$
runs forever, for each possible location in $N_i\times N_i$, 
there is a $\bar c \in T_i$
which has that location.    
If the location should contain a wire alone then there are exactly 
two $\bar c \in T_i$ 
with that location (one for each bit the wire could carry) 
and if the location should contain 
computation bits then exactly the valid computation fragments
can appear there.
On the other hand, if machine $e_0$ halts, then eventually the tilesets 
$T_i$ become empty.

Recall from the DRS construction the following definition:
a tileset $T$ \emph{simulates} a tileset $S$ at zoom
level $N$ if there is an injective map $\phi:S\rightarrow T^{N^2}$
which takes each tile from $S$ to a valid $N\times N$ array of tiles
from $T$, such that
\begin{itemize}
\item For any $S$-tiling $U$, $\phi(U)$ is a $T$-tiling.
\item Any $T$-tiling $W$ can be uniquely divided into an infinite array of
 $N\times N$ macrotiles from the image of $S$.
\item For any $T$-tiling $W$, $\phi^{-1}(W)$ is an $S$-tiling.
\end{itemize}
Here we have abused notation to let $\phi$ map tilings to tilings in
the obvious way.  

\begin{prop}
For each $i\geq i_0$, $T_i$ simulates $T_{i+1}$ at zoom level $N_i$.
\end{prop}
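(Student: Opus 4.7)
The plan is to exhibit the simulation map $\phi \colon T_{i+1} \to T_i^{N_i^2}$ directly, using the self-referential design of $A$ (step 3 of $A$ at level $i$ certifies that the parent computation runs $A$ at level $i+1$), and then to verify the three conditions in the definition of simulation.

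First I would define $\phi$. Given $(c_1', c_2', c_3', c_4') \in T_{i+1}$, I assign a $T_i$ tile at each position $(x,y) \in [0, N_i)^2$ as follows: the location part records $(x,y)$; the wire part follows the unique $s_{i+1}$-width layout specified in step 2(b) of $A$, which delivers $c_1', c_2', c_3', c_4'$ from the boundary of the $N_i \times N_i$ region into the designated input positions of the computation zone; and, for $(x,y)$ in the computation zone, the computation part records the corresponding cell of the space-time diagram of $A(0^e 1 0^{i+1} 1 c_1' c_2' c_3' c_4')$. By hypothesis this run proceeds at least $N_i/2$ steps, so the $N_i/2$-wide computation zone can be filled consistently.

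Next I would verify that each produced tile $(c_1, c_2, c_3, c_4)$ lies in $T_i$. Parsing in step 1 succeeds since $e < i$ (as $i \geq i_0 > e$); consistency (step 2) and sync-levels (step 3) pass because the parent tape reads $0^e 1 0^{i+1} 1 c_1' c_2' c_3' c_4'$ exactly as step 3 demands. These four checks complete in at most $N_{i-1}/4$ steps by the choice of $i_0$. Step 5 then simulates $e_0$ on empty input, whose behavior is independent of the level encoded in the prefix; since $A(0^e 1 0^{i+1} 1 c_1' c_2' c_3' c_4')$ already runs at least $N_i/2 = N_{i-1}$ steps and uses at most $N_i/4$ of those on checks, the $e_0$ simulation persists at least $N_i/4 \geq N_{i-1}/2$ steps before halting. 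Hence $A(0^e 1 0^i 1 c_1 c_2 c_3 c_4)$ also lasts at least $N_{i-1}/2$ steps.

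The three simulation conditions then follow. For (i), adjacent $T_i$ tiles within a macrotile share colors by construction, and adjacencies across macrotiles inherit from the source $T_{i+1}$-tiling. For (ii), the $2 \log N_i$ location bits in each $T_i$ tile force consistent grid alignment with neighbors, so the decomposition into $N_i \times N_i$ macrotiles is globally forced. For (iii), any macrotile in a $T_i$-tiling has a computation zone whose tiles encode, via step 3, a run of $A(0^e 1 0^{i+1} 1 c_1' c_2' c_3' c_4')$ of length at least $N_i/2$, so $(c_1', c_2', c_3', c_4') \in T_{i+1}$. The main obstacle is the bookkeeping for the layered self-referential design: verifying that step 3 pins down the parent prefix uniquely as $0^e 1 0^{i+1} 1$ (so the parent level increments from $i$ to $i+1$), and that $i \geq i_0$ simultaneously supplies enough polynomial slack to complete steps 1--4 within $N_{i-1}/4$ and enough width to accommodate $s_{i+1}$-wires inside the $N_i \times N_i$ region. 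Beyond these checks, the proposition is a direct unwinding of the DRS definition of simulation.
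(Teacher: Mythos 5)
Your proposal is correct and follows the same route as the paper's proof: map $\bar d \in T_{i+1}$ to the unique $N_i\times N_i$ array of $T_i$-tiles determined by the wire layout and the deterministic space-time diagram of $A(0^e10^{i+1}1\bar d)$, using the running-time bound $N_i/2$ to fill the computation zone. Your additional bookkeeping (checking that each produced tile individually satisfies the $N_{i-1}/2$-step requirement by comparing the $e_0$-simulation time across levels) is a correct elaboration of a detail the paper leaves implicit in its surrounding discussion.
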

\begin{proof}
Given $S = \bar d \in T_{i+1}$, map it to the unique $N_i\times N_i$
pattern of $T_i$-tiles whose wires carry data $\bar d$.  The content 
of the wires uniquely determines the content of the computation 
zone because the computation is deterministic, the 
starting tape contents are uniquely determined by $\bar d$,
and $T_i$ only includes tiles which contain valid computation 
fragments.  Since $A(0^e10^{i+1}1\bar d)$ is still running after $N_i/2$-many 
steps, the whole computation zone can be filled with $T_i$-tiles.
\end{proof}

Then by induction, for each $i \geq i_0$, $T_{i_0}$ simulates 
$T_{i}$ at zoom level $\prod_{i_0\leq k <i} N_k$.

\begin{prop}
There is a $T_{i_0}$-tiling if and only if Turing machine $e_0$ 
runs forever.
\end{prop}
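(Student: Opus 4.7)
The plan is to apply the simulation chain. By iterating the preceding proposition, for every $j \geq i_0$, $T_{i_0}$ simulates $T_j$ at zoom level $P_j := \prod_{i_0 \leq k < j} N_k$ via an injective map $\phi_j$ that sends each $T_j$-tile to a valid $P_j \times P_j$ patch of $T_{i_0}$-tiles and takes $T_j$-tilings to $T_{i_0}$-tilings.

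For the ``only if'' direction I would argue the contrapositive. Suppose $e_0$ halts in $s$ steps. Since Steps $1$--$4$ of $A$ run in time $\poly(j)$, choose $j$ so large that $s + \poly(j) < N_{j-1}/2$. Then on every input of the form $0^e 1 0^j 1 c_1 c_2 c_3 c_4$, the algorithm $A$ either halts quickly during parsing or consistency checks, or else reaches Step 5 and halts within $s$ further steps of simulating $e_0$; in either case $A$ halts before $N_{j-1}/2$ steps, so $T_j = \emptyset$. Any $T_{i_0}$-tiling $W$ would produce a $T_j$-tiling through iterated application of $\phi_j^{-1}$, which is impossible.

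For the ``if'' direction, assume $e_0$ runs forever. Then for every $j \geq i_0$, the parsing, consistency, and sync-level checks of $A$ are satisfiable at every location of an $N_j \times N_j$ macrotile, because the wire layout is deterministic from the macrocolors and the universal simulation of $e_0$ never halts; hence $T_j \neq \emptyset$. Picking any $\tau_j \in T_j$, its image $\phi_j(\tau_j)$ is a valid $P_j \times P_j$ pattern of $T_{i_0}$-tiles. Translating these patterns so that their centers sit at the origin produces a sequence of locally valid $T_{i_0}$-patterns on squares exhausting $\mathbb Z^2$, and the standard compactness of $\Delta^{\mathbb Z^2}$ extracts a subsequential limit that is a $T_{i_0}$-configuration of the whole plane.

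The main subtlety is the nonemptiness verification for $T_j$ in the ``if'' direction: one has to argue that the deterministic construction actually provides, at every location and for every macrocolor combination expected to arise, at least one color quadruple on which $A$ survives $N_{j-1}/2$ steps. This is exactly where the infinite-running assumption on $e_0$ is used, and it is the step that ties the combinatorics of DRS macrotiles to the computational content of the theorem.
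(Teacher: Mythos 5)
Your proposal is correct and follows essentially the same route as the paper: the paper's (much terser) proof also reduces the statement to ``$T_i$ is non-empty for all $i > i_0$'' via the simulation chain and compactness, and then observes that this holds if and only if $e_0$ runs forever. Your write-up simply makes explicit the details the paper leaves implicit (the choice of $j$ with $s + \poly(j) < N_{j-1}/2$ in the halting case, and the extraction of a limit configuration from the patches $\phi_j(\tau_j)$ in the non-halting case).
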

\begin{proof}
By compactness, there is $T_{i_0}$-tiling if and only if 
$T_i$ is non-empty for all $i>i_0$.  This occurs
if and only if Turing machine 
$e_0$ runs forever.
\end{proof}

This completes the exposition of the basic module.
For further details we refer the reader to \cite{DurandRomashchenkoShen2012}.

\subsection{Overview of the paper}

The rest of the paper is divided into two parts.  In Secton \ref{sec:rank3}
we construct a $\mathbb Z^2$-SFT of TCPE rank 3, answering 
Barbieri and Garc\'ia-Ramos and laying the foundation for the more 
general results.  

The construction proceeds in stages.  First we define an effectively closed 
subshift $X$ which has TCPE rank 3.  Next we 
describe the computational overlay which allows us to replace 
the infinitely many restrictions defining $X$ with an algorithm 
that will simulate those restrictions.  Finally, we tweak the computation 
framework so that it provides no interference to the local entropy 
properties of $X$.

In slightly more details, the configurations of $X$ consist of \emph{pure 
types}, which are seas of squares, tightly packed together, all the same 
size; and \emph{chimera types}, which contain up to two sizes of squares, 
where the sizes must be adjacent integers.  Infinite, degenerate squares 
also inevitably result; they cannot coexist with finite squares.  Because 
of the chimera types, a configuration of pure type $n$ will form an entropy 
pair with a configuration of pure type $n+1$, but the entropy pairhood 
relation cannot extend to larger gaps.  The infinite, degenerate 
types are connected to the finite types by topological closure only.
However, $X$ is topologically connected enough that the TCPE process 
finishes.  

To show that such a shift is sofic is a straightforward application of the 
DRS framework, simpler than the related square-counting 
construction in \cite{Westrick2017}.  However, no shift with TCPE can contain 
a rigid grid (erasing everything but the grid would yield a non-trivial zero 
entropy factor).  This apparent problem is solved by imagining the entire 
subshift is printed on a piece of fabric, which we then pinch and stretch 
so that the deformed grid itself bears entropy.  The same idea was used 
by Pavlov \cite{Pavlov2018} and we build on his construction.

Finally, a technical problem arises involving the interaction of rare computation 
steps with the need for a fully supported measure.  This problem is solved 
with the notion of a \emph{trap zone}, an idea which originated in 
\cite{DurandRomashchenko-TA}.  The problem is not of any fundamental importance 
and the solution is technical, so it could be skipped on a first reading.

The second part of the paper, Section \ref{sec:rankalpha}, builds heavily 
on the first part and contains all the main results.  Again we build 
an effectively closed subshift, this time 
with a high TCPE rank, superimpose a computation 
to show it is sofic, and put it on fabric to make the computation 
transparent to local entropy.

The TCPE process in $X$ finished quickly because all the pure 
types were transitively chained together.  We can make the process 
finish more slowly by putting topological speed bumps between 
the pure types.  Forbid most of the chimera types of $X$, leaving 
only \emph{regular} chimera types -- those in which the squares occur 
in a regular grid pattern only.  This kind of regular grid pattern is no 
good for connecting pure types, so the entropy pair connections 
are broken.  Since two sizes of square now occur in 
a regular grid, a configuration of chimera type can be parsed 
as a configuration on a macroalphabet where the two macrosymbols 
are the two different squares.  Apply the exact same restrictions 
that defined $X$ to the configurations on this macroalphabet. 
Now the pure types will still get connected, but instead of being 
connected immediately as an entropy pair, they have to wait 
until the TCPE process on the chimera types finishes.  Topological 
speed bumps can be introduced into the chimera types 
of the chimera types, further lengthening the process.

Too many speed bumps, and the TCPE process 
will not finish. But if the speed bumps are organized using 
a well-founded tree, they will not hold things up forever. 
Some ill-founded trees even produce subshifts with TCPE.  We show that
the length of the TCPE process is controlled by the Hausdorff 
rank of the lexicographical ordering on $T \cup [T]$.

Showing that the resulting subshifts are sofic, and superimposing 
the computations transparently to local entropy, requires more 
technical work but contains no surprises.

Finally, all constructions are completely uniform, so in the end 
we can produce a procedure which maps a tree $T$ to 
a $\mathbb Z^2$-SFT $Y$ in such a way that $Y$ 
has TCPE if and only if $T$ is well-founded, and when 
this happens the well-founded rank of $T$ and the 
TCPE rank of $Y$ are related in a predictable way.  This 
simultaneously gives both the $\Pi^1_1$-completeness of 
TCPE and the population of the computable ordinal part of the
hierarchy.

\section{A $\mathbb Z^2$-SFT with TCPE rank 3}\label{sec:rank3}

We begin with constructing a $\mathbb Z^2$-SFT with TCPE rank 3.  
In this construction, many of the features of the general construction 
already appear.   

First we will define an effectively closed $\mathbb Z^2$ subshift
with TCPE rank 3.  Next 
we will argue this subshift is sofic.  Finally, we will show how 
to modify it to obtain a SFT with the same properties.

\subsection{An effectively closed $\mathbb Z^2$ subshift of TCPE rank 3}

\begin{definition}
An $n$-square on alphabet $\{A,B\}$ is an $n\times n$ square of $B$'s, 
surround by an $(n+2)\times(n+2)$ border of $A$'s.
\end{definition}

\begin{definition}\label{def:FAB}
For any alphabet $\{A,B\}$, let $F_{A,B}$ denote a computably enumerable 
set of forbidden patterns which achieves the following restrictions
\begin{enumerate}
\item Every $2\times 2$ block of $B$'s is in the interior of an $n$-square.
\item Every $A$ is part of the border of a unique $n$-square.  
(Infinite, degenerate squares are possible.)  Two such squares 
may be adjacent (see Figure \ref{fig:nestedarrows}), but boundaries may not be shared.
\item If a configuration contains an $n$-square and an $m$-square, then $|m-n|\leq 1$.
\end{enumerate}
\end{definition}

We will show that the subshift $X \subseteq 2^{\mathbb Z^2}$ defined by 
forbidden word set $F_{0,1}$ 
has TCPE rank 3.  In order to do this, we partition $X$ into countably many pieces,
or types, as follows.  The possible types are 
$\omega \cup \{(n,n+1) : n \in \omega\} \cup \{\infty\}$.  To determine the type 
of some configuration $x \in X$, examine what $n$-squares appear in $x$.
\begin{definition}\label{defn:type}  If $x \in X$, the \emph{type} of $x$ is
$$\type(x) = \begin{cases} n & \text{ if $x$ contains only $n$-squares}\\
(n,n+1) &\text{ if $x$ contains $n$-squares and $(n+1)$-squares}\\
\infty &\text{ if $x$ contains no finite $n$-squares for any $n$}\end{cases}$$
\end{definition}

Observe that if $\type(x) = \infty$, then either $x = 1^{\Z^2}$, or the 0's 
which do appear in $x$ appear as the boundaries of up to four 
infinite $n$-squares.  Otherwise, if a finite $n$-square appears, no 
infinite $n$-square may appear, because by part (3), it is forbidden to 
have both an $n$-square and another square that appears to be very large.
And if a square of any other size appears, then again by part (3), the 
sizes can differ by only one, so if any finite square appears, then 
$x$ must have type $n$ or $(n,n+1)$ for some $n$.  The restriction in 
part (1) guarantees that the squares are close together; either touching,
or separated by just a small space.

Now let us identify the entropy pairs.  The fact that squares may be 
either touching or separated by one unit provides freedom for 
gluing blocks.  More precisely, if $i,j \in \Z$, we let $[i,j)$ denote the set 
$\{i, i+1, \dots, j-1\} \in \Z$.  If $v$ is a pattern in $\mathcal L(Y_1)$, 
define the \emph{type} of $v$ as
$$\type v = \{\type(x) : x\in X \text{ and } h(v)\in x\},$$
where $h:Y_1\rightarrow 2^{\mathbb Z^2}$ is the obvious factor map.
We have the following lemma, which shows that any pattern consistent 
with a configuration of a finite type can be extended to a rectangular 
block of completed squares.  The point is that blocks of the kind guaranteed 
below can be placed adjacent to each other freely without breaking any rules 
of $F_{0,1}$.  We state the lemma here but leave the proof for the 
end of the subsection.

\begin{lemma}\label{lem:extension}
For any type $t \in \omega\cup \{(n,n+1): n \in \omega\}$, and any $k\in \omega$,
there is an $N$ large enough so that for all $v \in \Lambda^{[0,k)^2}$, 
if $t \in \type(v)$, 
then for any rectangular region $R$ which contains $[-N, k+N)^2$, we can extend
$v$ to $v' \in \Lambda^{R}$ such that
\begin{enumerate}
\item $t \in \type(v')$
\item $v'$ contains only completed squares 
(every boundary arrow in $v'$ is part of an $n$-square fully contained in $v'$.)
\item The boundary of $v'$ does not contain any two adjacent 1's, nor any 1's 
at a corner.
\end{enumerate}
\end{lemma}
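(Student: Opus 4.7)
The plan is to lift $v$ to a witness configuration in $Y_1$, carve out a sub-rectangle whose boundary lies along $0$-streets between squares of the witness, and then tile the remaining annulus out to $\partial R$ with complete squares, leaving a $0$-buffer along $\partial R$.

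First I would use $t \in \type(v)$ to fix a configuration $y \in Y_1$ with $y \uhr [0,k)^2 = v$ and $\type(h(y)) = t$. Since $t$ is a finite type, every square in $h(y)$ has side at most $n+1$ and, together with its one-cell $0$-border, fits inside an $(n+3) \times (n+3)$ footprint; the $0$-streets between adjacent squares therefore recur on scale at most $n+3$. Next I would take $N$ to be a moderate constant multiple of $n+3$, large enough to guarantee the existence of a rectangular $R^{\ast}$ with $[0,k)^2 \subseteq R^{\ast} \subseteq [-N,k+N)^2$ whose boundary runs entirely along these $0$-streets, never cutting a square interior. Setting $v'' := y \uhr R^{\ast}$, I get an extension of $v$ on $R^{\ast}$ consisting only of completed squares, with a $0$-ring along $\partial R^{\ast}$.

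To extend $v''$ to all of $R$, I would tile the annulus $R \setminus R^{\ast}$ with complete $n$-squares (and $(n+1)$-squares, if $t = (n,n+1)$), placing tiles with spacings chosen from $\{n+2, n+3\}$ according to whether adjacent squares are touching or separated by one unit. Since $\gcd(n+2, n+3) = 1$, the Sylvester--Frobenius theorem tells us that every integer above $(n+1)(n+2)$ can be written as $a(n+2) + b(n+3)$ with $a, b \geq 0$, so any sufficiently wide strip can be tiled at will. The tiles in the outermost row are placed so their $0$-borders lie along $\partial R$, producing the required $0$-buffer. The resulting $v' \in \Lambda^R$ extends $v$, has type $t$ (every square is of size $n$ or $n+1$), contains only completed squares (conditions (1) and (2)), and by the outer $0$-buffer has no two adjacent $1$'s on its boundary and no $1$ at any corner.

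The main obstacle is a bookkeeping one: for any given $R$, the widths of the four strips making up $R \setminus R^{\ast}$ must each lie in the Frobenius-achievable set, and the tile placements of adjacent strips must match up at the four corners of the annulus. I would resolve this by giving $R^{\ast}$ a small amount of wiggle room inside the $[-N,k+N)^2$ window, shifting it until each outer strip width falls into the tileable range. Since $N$ is chosen to be several times larger than $(n+1)(n+2)$, this wiggle room is always available. Once $R^{\ast}$ is placed, the four strip tilings are matched at the corners by letting a single corner tile belong to both of its incident strips, which works because the strip-by-strip tile placements are independent up to the parity of their spacings.
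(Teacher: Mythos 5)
The outward-growth half of your argument --- towers of complete squares with consecutive spacings drawn from $\{n+2,n+3\}$, the Sylvester--Frobenius observation that every width above $(n+1)(n+2)$ is achievable, and the flush $0$-borders along $\partial R$ giving condition (3) --- is essentially the same mechanism the paper uses, and that part is sound. The gap is at the very first step: the rectangle $R^{\ast}$ whose boundary ``runs entirely along $0$-streets, never cutting a square interior'' need not exist, no matter how large you take $N$. The restrictions $F_{0,1}$ do not force the squares into aligned rows and columns; they only force each square to be within one pixel of its neighbors. A configuration of type $n$ can therefore be a sheared ``staircase'' packing (for instance, squares placed on the lattice generated by $(n+2,c)$ and $(0,n+2)$ with $0<c<n+2$), in which the interiors of horizontally consecutive squares occupy row-intervals that overlap and, as one moves along the row, exhaust every height. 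In such a configuration \emph{every} horizontal line cuts through infinitely many square interiors, so there is no straight horizontal cut available for the top or bottom edge of $R^{\ast}$, and the annulus $R\setminus R^{\ast}$, whose strip-by-strip tiling presupposes a straight inner boundary meeting completed squares, cannot even be set up.

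This wiggle is precisely the difficulty the paper's proof is organized around. Instead of a straight cut, it extracts from the witness configuration a closed \emph{wiggly} loop of squares surrounding $v$ (a ``top line,'' ``bottom line,'' ``left line,'' and ``right line'' of pairwise nearly-adjacent squares, shown by a secant-slope estimate to avoid $v$ and to close up into a loop), takes everything inside that loop as a ``nice'' core, and then grows towers of squares outward from each square of the loop, using the $\{+0,+1\}$ spacing freedom over a distance of order $n^{2}$ to bring all towers flush with the straight boundary of $R$ simultaneously. To repair your argument you would need to replace ``rectangle along streets'' with such a loop of squares and let the outward towers --- started at the varying heights dictated by the wiggly loop, rather than a rectangular annulus of uniform strips --- absorb the offset between the irregular inner boundary and the straight outer one.
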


Now we can describe the entropy pairhood facts, which depend only on type.
\begin{lemma}\label{lem:ep-via-type}
If $x,y \in X$, then the following table summarizes exactly when $x$ and $y$ are an entropy-or-equal 
pair (redundant boxes are left blank).
\begin{table}[htbp]
\begin{tabular}{|c|c|c|c|}\hline
\backslashbox{$\type(x)$}{$\type(y)$} & $n$ & $(n,n+1)$ & $\infty$ \\\hline
$m$ & iff $|n-m|\leq 1$ & iff $m \in \{n,n+1\}$ & never\\\hline
$(m,m+1)$ & &iff $m=n$ & never \\\hline
$\infty$ & & & always\\\hline\end{tabular} 
\end{table}
\end{lemma}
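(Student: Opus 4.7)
The plan is to verify each cell of the table by one of two standard methods. For cells asserting entropy-or-equal pairhood, I apply the pattern-independence criterion from the preliminaries together with Lemma~\ref{lem:extension}. For cells asserting non-pairhood, I exhibit disjoint clopen $K_x \ni x$ and $K_y \ni y$ with $h(X, \{K_x^c, K_y^c\}) = 0$, witnessed by a subcover of size at most two.

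\textbf{Positive direction.} Assume $x \neq y$ and fix a finite $S \subseteq \mathbb Z^2$. Each positive cell of the table admits a common host type $t$ in which both $x \uhr S$ and $y \uhr S$ can be realized: type $m$ vs $m$ is housed by $m$; type $m$ vs $m \pm 1$ and type $m$ vs $(n, n+1)$ with $m \in \{n, n+1\}$ are housed by the relevant chimera; two identical chimeras host themselves; and the $\infty$-vs-$\infty$ case reduces to large finite type by observing that any finite pattern of a type-$\infty$ configuration is also realized in type-$n$ configurations for all $n$ sufficiently large, obtained by capping each infinite $0$-boundary meeting $S$ with an enormous $n$-square whose border lies far outside $S$. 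Applying Lemma~\ref{lem:extension} at the host type $t$ to both patterns yields rectangular extensions $v'_x, v'_y$ consisting only of complete squares and having boundaries free of adjacent $1$'s and corner $1$'s. Such blocks can abut without creating patterns forbidden by $F_{0,1}$, so tiling $\mathbb Z^2$ along a grid of spacing equal to the block side, with an independent choice of $v'_x$ or $v'_y$ at each cell, yields a valid configuration of $X$. The resulting positive-density set of block-center offsets $J$ witnesses independence of $x \uhr S$ and $y \uhr S$; repeating for every finite $S$ shows $(x, y)$ is an entropy-or-equal pair.

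\textbf{Negative direction.} For each negative cell, I construct two clopen signature features $F_x, F_y$ realized at the origin of $x$ and $y$ respectively (after translation) and globally incompatible in $X$. When both types are finite or chimera, Definition~\ref{def:FAB}(3) lets me choose sizes $k_x$ occurring in $x$'s type and $k_y$ occurring in $y$'s type with $|k_x - k_y| \geq 2$ (always possible in negative cells: for instance, in $(3,4)$-vs-$(4,5)$ take $k_x = 3, k_y = 5$, avoiding the shared size $4$); let $F_x, F_y$ be the corresponding squares at the origin. When one type is $\infty$, replace the corresponding feature by an all-$1$'s block of side $M$ larger than the maximum square size in any configuration of the other type; such a block appears at some shift of every type-$\infty$ configuration (each type-$\infty$ configuration is either $1^{\mathbb Z^2}$ or contains a region of $1$'s bounded only by infinite $0$-lines), but nowhere in a configuration of the other type (whose maximal all-$1$ interior is bounded by its maximum square size). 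Let $K_x, K_y$ be the clopen cylinders determined by $F_x, F_y$ at the origin. They are disjoint, and the size gap (or $M$-gap) ensures that no $z \in X$ exhibits $F_x$ anywhere \emph{and} $F_y$ anywhere. Consequently for every $z \in X$ and every $n$, either all shifts in $[0, n)^2$ miss $F_x$ or all shifts miss $F_y$, so the two constant labelings $L \equiv K_x^c$ and $L \equiv K_y^c$ cover $X$. Hence $\mathcal N(\{K_x^c, K_y^c\}_n) \leq 2$ for all $n$, yielding $h(X, \{K_x^c, K_y^c\}) = 0$.

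The main obstacle is the $\infty$-vs-$\infty$ positive case, where one must verify that the infinite-boundary-capping reduction preserves $x \uhr S$ and $y \uhr S$ exactly so that Lemma~\ref{lem:extension} genuinely applies to the capped patterns at a finite host type. The negative chimera-vs-chimera case also requires care: a shared size cannot serve as a signature, so one must choose sizes differing by at least $2$ even when the two chimera types overlap in one size.
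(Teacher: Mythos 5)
Your proposal is correct and follows essentially the same route as the paper: the positive cells are handled by finding a common finite host type, invoking Lemma~\ref{lem:extension}, and tiling the plane with independently chosen extended blocks, while the negative cells follow from the existence of globally incompatible patterns (squares of sizes differing by at least $2$, or a large all-$1$ block versus a bounded-type configuration). Your explicit two-element-subcover computation showing $h(X,\{K_x^c,K_y^c\})=0$ just spells out what the paper leaves implicit when it says incompatible patterns preclude entropy pairhood.
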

\begin{proof}
First we prove all the ``if'' directions.  Suppose $x$ and $y$ have types which 
the table indicates should be entroy-or-equal pair types.  In each of the finite cases, 
there exists a finite type $t$ such that arbitrarily large patterns of $x$ and $y$ 
are each consistent with type $t$.  In the $(\infty, \infty)$ case, for any pair of 
patterns from $x$ and $y$, there is also always a finite type $t$ that is consistent 
with both patterns (any sufficiently large finite type will do).  Given $v$ and $w$ 
$k\times k$ patterns from $x$ and $y$, let $t$ be such a finite type, and let $N$ 
be the number guaranteed by Lemma \ref{lem:extension}.  
Partition the $\mathbb Z^2$ into square plots of side length $2N+k$.
In the center of each plot, place a copy of either $v$ or $w$ (independent choice).
By Lemma \ref{lem:extension}, fill in the rest of each plot in a way consistent 
with $t$.  The result obeys all the rules of 
$F_{0,1}$, so $x$ and $y$ are an entropy pair.

In the other direction, if $x$ and $y$ have types which the table indicates should 
not be entropy-or-equal pairs, that means that $x$ and $y$ have squares of 
size difference more than one (in some cases we are looking at a finite square and 
an infinite, degenerate square, which is also a size difference more than one). 
If $w$ and $v$ are patterns of $x$ and $y$ which are large enough to show 
these too-different squares, then $w$ and $v$ are forbidden from appearing 
in the same configuration, so $x$ and $y$ are not an entropy pair.
\end{proof}

Observe that if $\type(x) = \type(y)$, then $x$ and $y$ are always an entropy pair. 
Therefore, we may represent the entropy pairhood relation $E_1$ by the following 
graph.

\begin{figure}[hbpt]
\includegraphics[width=\textwidth]{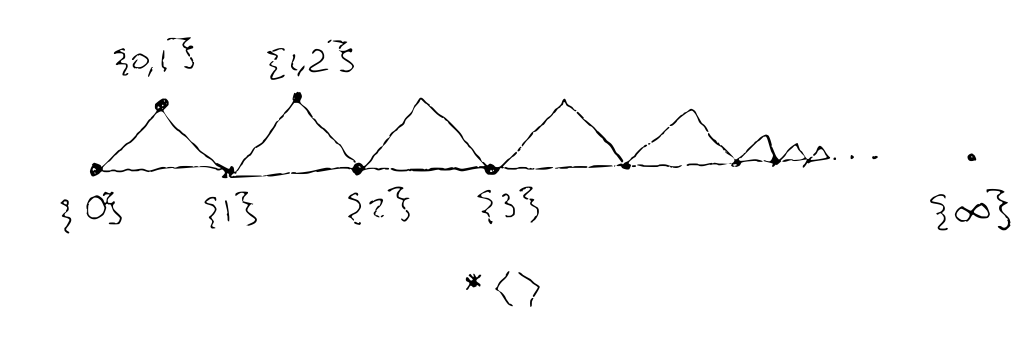}
\end{figure}

Next, $E_2$ is obtained by taking the transitive closure of $E_1$.  
We see that $E_2$ is an equivalence relation with two classes:
$\{x : \type(x) \text{ is finite}\}$ and $\{x : \type(x) = \infty\}$.
Finally, $E_3$ is obtained as the topological closure of $E_2$. 
Observe that $E_3 = X^2$ (every $x$ with infinite type is the 
limit of a sequence of $y$ of increasing finite type).  Therefore, 
the TCPE rank of $X$ is 3.

We conclude this subsection with the proof of Lemma \ref{lem:extension}.
\begin{proof}[Proof of Lemma \ref{lem:extension}]
We first describe a way to construct $v'$ which works if $v$ is nice.  Then we 
explain how to expand any $v$ to a nice $v$.

First, complete any partial squares to produce a pattern consistent with type $t$.  
Call a square $s$ a ``top square'' if there are no squares intersecting the space directly 
above $s$.  Similarly, we have left squares, right squares, and bottom squares. 
The top squares are strictly ordered left to right.  Let us say that the row of top squares 
is \emph{nice} if for every adjacent pair $(s,s')$ of top squares, there is at most 
one pixel gap between the columns that intersect $s$ and the columns that intersect 
$s'$.  We can make a similar definition for the left, right and bottom row of squares.
Call $v$ nice if all four of its rows are nice.

If $v$ is nice, we can extend $v$ to such $v'$ by the following algorithm.  
Directly above each 
top square, place another square of the same size, either with boundaries 
touching, or with a one pixel gap.  Keep adding squares directly above existing ones
until the top of $R$ is reached.  Use the 
freedom of choice in spacing to make sure the topmost square has its top row 
flush with the boundary of $R$.  This is possible if the boundary is at least distance 
$O(n^2)$ away, where $n$ is the size of the top square we started with. 
Do the same for the bottom squares, but going down.  Since $v$ is nice, 
there is no more than one pixel wide gap between these towers of squares, 
so the restrictions $F_{0,1}$ are so far satisfied.

Turn our attention now to the left row.  Let $s$ be the left-most top square.
Then $s$ is also a left square. 
(If it were not a left square, there would be a square that lies 
completely in the half-plane to the left of it; a top-most such square would 
be a top square further left than $s$.) The tower of squares which 
we placed above $s$, together with the tower of squares 
which we placed below the left-most bottom square,
form a natural extension of the left row.  This extended left row is nice, and 
reaches from the top to the bottom of $R$.  Directly to the left of each square in 
the extended left row, place another square of the same size, either with 
boundaries touching, or with a one-pixel gap, strategically chosen.  
Keep adding squares until 
the left boundary of $R$ is reached.  Again we can ensure that the left-most 
added squares have boundary flush with the left edge of $R$.  Doing the 
same on the right side completes the construction in case $v$ is nice.

Now we deal with the case where $v$ is not nice. 
Let $x$ be a configuration of type $t$ in which $v$ appears.  We are 
going to take a larger pattern from $x$ which is nice and 
includes $v$.  To find the top row of this pattern, start with 
a square $s_0$ in $x$ that is located directly above $v$ and is at least 
$k$ distance away from the top of $v$.  Now consider the space directly 
to the right of $s_0$.  There must be a square $s_1$, intersecting this space,
such that the the gap between the right edge of $s_0$ and the left edge of $s_1$ 
is no more than one pixel.  Going in both directions from $s_0$, fix 
a bi-infinite sequence $(s_i)_{i\in\mathbb Z}$ of squares such that for each $i$, 
the square $s_{i+1}$ intersects the space directly to the right of $s_i$,
and there is at most 
one pixel gap 
between the columns that intersect $s_i$ and the columns that intersect 
$s_{i+1}$.  Call this sequence the ``top line'', although it is not a line, 
as it may be rather wiggly.  However, it is roughly horizontal; due to the fact 
that the square sizes are uniformly bounded, 
$s_{i+1}$ intersects the space to the right of $s_i$ by a definite fraction of 
its height.  Therefore, the slope of the line which connects the center of $s_i$ 
and the center of $s_{i+1}$ has magnitude less than $1-\varepsilon$ 
for some $\varepsilon$ depending only on $t$.  
Since $s_0$ is located at least $k$ above $v$, 
an intersection would require a secant slope of magnitude at least 1 
in the top line, so it follows that the top line 
cannot intersect $v$.  Similarly, make a left line, a right line and a bottom line.
The left and right lines always have secant slopes at least $1+\varepsilon$ 
in magnitude.  Due to these approximate slopes, 
the top line and the bottom line must 
each intersect the right line and the left line, and they do so within 
a bounded distance.   Take the 
pattern which consists of the loop made by the four lines 
and all the squares inside that loop.  This pattern is nice.  Apply the argument above.
\end{proof}

\subsection{Enforcing shape restrictions by a SFT on an expanded alphabet}

The shift $X$ from the previous subsection was only effectively closed,
while our goal is to create a SFT with the same properties.  First we show 
how to realize restrictions (1) and (2) of $F_{A,B}$ using 
local restrictions on an expanded alphabet.  Let
$$\Lambda = \{1, \circ, \leftarrow,
 \rightarrow, \uparrow, 
 \downarrow,\ulcorner, \urcorner, \llcorner, \lrcorner, \gleftarrow, \grightarrow,
 \guparrow, \gdownarrow, \gulcorner, \gurcorner, \gllcorner, \glrcorner \}.$$
 We consider the corner pieces to 
also be arrows; corner arrows always point counterclockwise. 
 
 \begin{definition}  Let $Y_1$ be the subshift whose configurations
consist of square blocks of arrows on a background of 1's, 
where
\begin{itemize}
\item Each block of arrows consists of of nested counterclockwise
squares of arrows, the outermost of which are colored gray, while the inner 
are colored white.
\item The $2\times 2$ block of 1's is forbidden.
\end{itemize}
\end{definition}
See Figure \ref{fig:nestedarrows}.   

\begin{figure}[htbp]
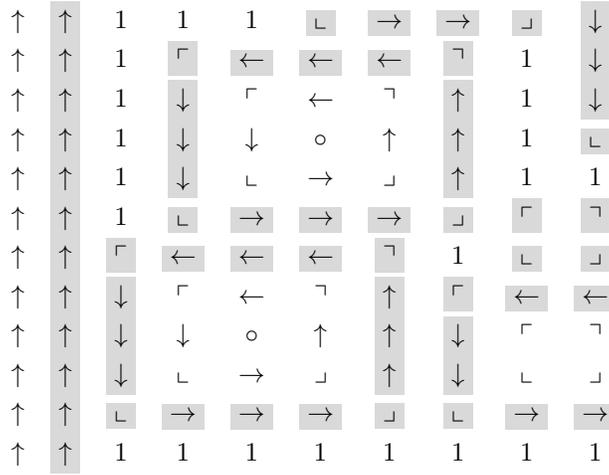

$\begin{array}{cccccccccc}
\uparrow & \guparrow & 1 & 1 & 1 & \gllcorner & \grightarrow & \grightarrow & \glrcorner & \gdownarrow \\
\uparrow & \guparrow & 1 & \gulcorner & \gleftarrow &\gleftarrow & \gleftarrow &\gurcorner & 1 & \gdownarrow\\
\uparrow & \guparrow & 1 & \gdownarrow & \ulcorner & \leftarrow & \urcorner & \guparrow & 1 & \gdownarrow\\
\uparrow & \guparrow & 1 & \gdownarrow & \downarrow & \circ & \uparrow & \guparrow & 1 & \gllcorner\\
\uparrow & \guparrow & 1 & \gdownarrow & \llcorner & \rightarrow & \lrcorner & \guparrow & 1 & 1\\
\uparrow & \guparrow & 1 & \gllcorner & \grightarrow &\grightarrow & \grightarrow &\glrcorner & \gulcorner & \gurcorner\\
\uparrow & \guparrow & \gulcorner & \gleftarrow & \gleftarrow & \gleftarrow & \gurcorner & 1 & \gllcorner & \glrcorner \\
\uparrow & \guparrow & \gdownarrow & \ulcorner & \leftarrow & \urcorner & \guparrow &\gulcorner &\gleftarrow &\gleftarrow \\
\uparrow & \guparrow & \gdownarrow & \downarrow & \circ & \uparrow & \guparrow &\gdownarrow &\ulcorner &\urcorner \\
\uparrow & \guparrow & \gdownarrow & \llcorner & \rightarrow & \lrcorner & \guparrow &\gdownarrow &\llcorner &\lrcorner \\
\uparrow & \guparrow & \gllcorner & \grightarrow & \grightarrow & \grightarrow & \glrcorner &\gllcorner &\grightarrow &\grightarrow \\
\uparrow & \guparrow & 1 & 1 & 1 & 1 & 1 & 1 & 1 & 1 \\
\end{array}$
\caption{A permitted pattern from an element of $Y_1$}\label{fig:nestedarrows}
\end{figure}

\begin{prop}
The set of $2 \times 2$ patterns from $\Lambda$ which never appear 
in $Y_1$, when taken as a set of forbidden patterns, define $Y_1$ as a SFT.
\end{prop}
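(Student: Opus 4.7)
The plan is a verification that the global structure defining $Y_1$ is enforceable by nearest-neighbor (i.e., $2\times 2$) constraints. The ``only if'' direction is immediate: any $2\times 2$ pattern appearing in a $Y_1$-configuration cannot be on the forbidden list. For the ``if'' direction, suppose $x \in \Lambda^{\mathbb{Z}^2}$ has the property that every one of its $2\times 2$ windows appears somewhere in some configuration of $Y_1$; we must show $x \in Y_1$.

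First I would tabulate, by case analysis, which horizontal and vertical adjacencies between pairs of symbols of $\Lambda$ are realized inside $Y_1$-windows. From this tabulation I would extract the following local rules: (a) each straight arrow ($\leftarrow,\rightarrow,\uparrow,\downarrow$ in either color) is followed, along its direction of travel, only by a same-color same-direction arrow or by the unique same-color corner that turns $90^\circ$ counterclockwise; (b) each corner symbol has a uniquely determined incoming arrow and outgoing arrow of matching color; (c) perpendicular to its direction of travel, each arrow has one ``outside'' neighbor forced to be either $1$ or an arrow of the next outer (opposite-color) ring, and one ``inside'' neighbor forced to be either $1$, an arrow of the next inner (opposite-color) ring, or an arrow facing it across a unit gap; (d) the $2\times 2$ block of $1$'s is itself a forbidden window.

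Applying these local rules to $x$, I would argue that the non-$1$, non-$\circ$ symbols in $x$ organize into disjoint connected components, each of which is the boundary of either a finite closed counterclockwise square (terminated at four corners) or a degenerate ``infinite square'' (an infinite straight line of arrows or a single right-angle bend extending infinitely in two perpendicular directions). Color rule (c) then forces at most the outermost loop of any nested block to be gray, with all strictly interior loops white, until the nesting terminates either at a central $\circ$ or at two opposing inner arrows meeting across a single pixel.

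The main obstacle is showing that the local rules genuinely force each loop to be a \emph{square}, rather than a rectangle, and that nested loops in the same block have consistently matching sizes. Here the crucial observation is that the innermost filling is locally rigid: a $\circ$ only sits adjacent to the four innermost-arrow configuration around it, which has equal width and height, and a pair of directly facing opposite arrows only appears at the dead center of an even-sided ring. Either way, the innermost layer has equal side lengths, and each outer layer is forced by the local adjacency rules to wrap exactly one unit outside the layer within it, propagating squareness outward. Combined with rule (d) for the background, this exhibits $x$ as a configuration of $Y_1$, completing the proof.
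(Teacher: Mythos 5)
Your approach is the same as the paper's (which in turn defers to Lemma 1 of the author's earlier square-counting construction): the $2\times 2$ windows force monochromatic, left-turning oriented paths that are tightly nested, with the color constraint operating only at the outermost (gray) layer, and squareness of each loop is tied to the rigid central termination ($\circ$, the $2\times 2$ corner block, or nothing). You propagate squareness outward from the center where the paper walks inward from the gray loop, but these are two readings of the same argument, and your ``only if'' direction and rule (d) for the background match the paper as well.

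There is one case your classification silently omits, and it is exactly the case your method cannot reach: a bi-infinite left-turning path with two corners, i.e.\ a U-shaped ``three-sided square.'' Rules (a)--(c) alone do not exclude it --- it is a perfectly good left-turning oriented path with consistent nesting on its inside --- yet it has no central $\circ$, corner block, or facing pair from which your outward propagation could start, so your argument never examines it and your asserted dichotomy (finite square vs.\ line or single bend) is unjustified at that point. The paper disposes of it by following the nested white paths inward: each successive U is two pixels narrower, so the two parallel arms eventually meet in one of the forbidden adjacencies $\begin{array}{cc}\downarrow & \uparrow\end{array}$ or $\begin{array}{c}\leftarrow\\ \rightarrow\end{array}$ (the chirality of the collision is wrong for a left-turning loop). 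You need to add this short argument. Two smaller inaccuracies in rule (c): the ring immediately inside a \emph{white} ring is again white (only the outermost ring is gray, so ``opposite-color'' is wrong there), and the symbol $1$ never occurs strictly inside a loop --- $1$'s live only in the exterior background. Neither of these affects the structure of the proof, but as stated your adjacency table would admit configurations that $Y_1$ forbids.
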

\begin{proof}
This is essentially identical to the proof of Lemma 1 in \cite{Westrick2017}.  
Note that the $2 \times 2$ restrictions suffice to guarantee that each arrow is 
continued by another arrow of the same color, forming oriented 
paths which make only left turns.  The $2\times 2$ restrictions also suffice 
to guarantee that on the inner edge of each oriented path, there is 
a tightly nested white path of the same 
orientation; that on the outer edge of each oriented white path there is 
a tightly nested path of the same orientation; and that on the outer edge 
of each oriented gray path we find only 1's and oppositely-oriented
gray paths.  Starting from a finite gray path and following its nested paths 
inward, we must terminate at nothing, a $\circ$, or a $\begin{array}{cc} \ulcorner & \urcorner \\ \llcorner & \lrcorner\end{array}$, which guarantees that the gray path was 
a square.  Infinite gray paths must be either one-sided or two-sided; the 
three-sided case is ruled out by following nested white paths inward 
until one of the forbidden configurations $\begin{array} {cc} \downarrow & \uparrow\end{array}$ or 
$\begin{array}{c} \leftarrow \\ \rightarrow\end{array}$ is found.
\end{proof}

 Observe that 
 if we let $h$ be the factor map which takes white symbols to 1 and gray symbols
 to 0, then $h(Y_1)$ is precisely the shift given by the restrictions (1) and (2) of $F_{0,1}$
from Definition \ref{def:FAB}.
In the next subsection we realize restriction (3) with SFT computation.

\subsection{Enforcing size restrictions with SFT computation}\label{sec:drs-basic}

We now show that the subshift $X$ defined by restrictions
$F_{0,1}$ is sofic, using 
a simple application of the Durand-Romashchenko-Shen 
SFT computation framework. 
This framework is likely overpowered for this application,
because there is no arbitrary algorithm appearing 
in the definition of $X$.
However, in the general case we must have complete 
computational freedom, 
so we introduce it now in the simpler setting.  We begin 
with the SFT $Y_1$ on alphabet $\Lambda$ 
 defined above, with all $2\times 2$ restrictions.
 We will use a superimposed computation 
 to realize the square size 
 restrictions.

We now describe a modification of the basic DRS 
module.  As in the introduction,
we will ultimately choose some large $i_0$,
form the tileset $T_{i_0}$,
and superimpose a tile from $T_{i_0}$ onto 
each symbol of $\Lambda$ (subject to some restrictions).
Note that the side-length of a macrotile at level $i_0$ is 
one pixel, while the side-length of a macrotile at level 
$i>i_0$ is $L_i := \prod_{i_0 \leq j < i} N_j$ pixels.

\begin{definition}We define the \emph{responsibility zone} 
of a level $i$ macrotile $M$ as follows
\begin{enumerate}
\item If $i=i_0$, the responsibility zone is the pixel tile $M$ itself.
\item If $i>i_0$, consider the $(N_{i-1}+2)$-side-length
square of level $i-1$ macrotiles which concentrically contains $M$. 
The responsibility zone of $M$ is the union of the responsibility 
zones of these level $i-1$ macrotiles.
\end{enumerate}
\end{definition}

Equivalently, the responsibility zone of a level $i$ macrotile $M$
is the square of pixel-side-length 
\mbox{$R_i :=(L_i + \sum_{j=i_0}^{i-1} 2L_j)$}
which concentrically contains $M$.

Macrotiles will need to ``know'' (have encoded 
somehow in their colors) what is going on inside 
their responsibility zone.  Recall that the macrotiles will 
be superimposed on $Y_1$-configurations.  Therefore, 
it makes sense to discuss $Y_1$-squares and their 
relative location to the macrotiles.

\begin{definition}
A macrotile $M$ is \emph{responsible for} a $Y_1$-square 
if there are macrotiles $M_1$ and $M_2$, both at the same level,
such that
\begin{enumerate}
\item $M_1$ and $M_2$ are fully contained in the responsibility 
zone of $M$,
\item $M_1$ and $M_2$ are either same tile or neighbors, and
\item the square has one corner in $M_1$ and another corner in $M_2$.
\end{enumerate}
\end{definition}

\begin{prop}\label{prop:responsibility}
If $M$ is a macrotile of level $i$, and $S$ is a $Y_1$-square, 
then $M$ is responsible for $S$ if and only if one of the following 
holds.
\begin{enumerate}
\item There is a level $i-1$ macrotile $M_1$ within the responsibility 
zone of $M$, and $M_1$ is responsible for $S$.
\item There are two adjacent level $i-1$ macrotiles $M_1,M_2$ within
the responsibility zone of $M$, and $S$ has one corner in each  of $M_1$ 
and $M_2$.
\item There are two (possibly non-adjacent) level $i-1$ macrotiles 
$M_1,M_2$ which are both children of $M$, and $S$ has 
one corner in each of $M_1$ and $M_2$.
\end{enumerate}
\end{prop}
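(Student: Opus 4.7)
The plan is to verify both implications by a case analysis on the common level $j$ of the witnesses $M_1, M_2$ from the definition of responsibility, supported by the explicit description of the responsibility zone $\on{RZ}(M)$ as a square of pixel-side $R_i = L_i + 2\sum_{i_0 \leq k < i} L_k$ centered on $M$. Two geometric facts drive the whole argument. First, the excess $R_i - L_i$ is much smaller than $L_i$, so no level-$i$ tile other than $M$ itself fits inside $\on{RZ}(M)$. Second, writing $r = \sum_{i_0 \leq k < i-1} L_k$, the telescoping estimate $r < L_{i-1}$ implies that the level-$(i-1)$ tiles fully contained in $\on{RZ}(M)$ are exactly the $(N_{i-1}+2)\times(N_{i-1}+2)$ array used in the definition of $\on{RZ}(M)$; in particular $\on{RZ}(P) \subseteq \on{RZ}(M)$ for any such $P$.

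For the direction $(\Leftarrow)$: in case (3), both corners of $S$ lie in children of $M$, hence in $M$, so $(M, M)$ witnesses responsibility at level $i$. In case (2), the given adjacent level-$(i-1)$ tiles are themselves a valid witness pair. In case (1), the second geometric fact lets us transport witnesses of $M_1$'s responsibility to witnesses of $M$'s responsibility.

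For the direction $(\Rightarrow)$: fix witnesses $M_1, M_2$ at some common level $j \leq i$. If $j = i$, the first geometric fact forces $M_1 = M_2 = M$, so both corners of $S$ sit in $M$; locating each corner in a child of $M$ gives conclusion (1), (2), or (3) according to whether the two children coincide, are adjacent, or are non-adjacent. If $j = i-1$, the second geometric fact places $M_1, M_2$ in the defining array, and the sub-cases $M_1 = M_2$ and $M_1, M_2$ adjacent yield (1) or (2) directly.

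The substantive sub-case is $j < i-1$. Here the level-$(i-1)$ ancestor of each $M_\ell$ may overshoot $\on{RZ}(M)$, so passing to ancestors does not work. Instead, the plan is to locate a single level-$(i-1)$ array tile $P$ whose responsibility zone contains both $M_1$ and $M_2$: then $P \subseteq \on{RZ}(M)$ is a level-$(i-1)$ tile responsible for $S$ via the witness pair $(M_1, M_2)$, yielding conclusion (1). Such a $P$ exists because consecutive responsibility zones at level $i-1$ overlap on a strip of width $2r \geq 2L_{i-2} \geq 2L_j$, which is enough to swallow the union $M_1 \cup M_2$, whose diameter is at most $2L_j$; a one-dimensional overlap calculation in each coordinate axis picks out a common $P$. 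The main obstacle I anticipate is exactly this geometric bookkeeping: verifying the overlap estimate cleanly and handling the various two-dimensional adjacency configurations of $(M_1, M_2)$ simultaneously.
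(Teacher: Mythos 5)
Your proposal is correct and takes essentially the same route as the paper's proof: a case analysis on the common level $j$ of the witnessing pair $(M_1,M_2)$, with the cases $j=i$, $j=i-1$, and $j<i-1$ resolved exactly as you describe, the last by exhibiting a single level-$(i-1)$ macrotile whose responsibility zone contains both witnesses. The only cosmetic differences are that the paper chooses $j$ minimal to force case (3) directly when $j=i$ (where you instead sub-case on which children contain the corners), and that your overlap computation for the $j<i-1$ step spells out a geometric fact the paper merely asserts.
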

\begin{proof}
It is clear that in each of the three cases above, $M$ is responsible for $S$. 
Suppose $M$ is responsible for $S$ via level $j$ macrotiles $M_1$ and $M_2$,
where $j$ is chosen as small as possible.  If $j=i$, then $M_1=M_2=M$ and 
since no smaller $j$ works, it must be that $S$ has one corner in each of 
two non-adjacent children of $M$, so case (3) above applies.  If $j=i-1$ 
and $M_1=M_2$, then $M_1$ is responsible for $S$, so case (1) above applies.
If $j=i-1$ and $M_1\neq M_2$, then case (2) above applies.  Finally, if $j<i-1$ 
then $M_1$ and $M_2$ both lie in the responsibility zone of a single level 
$i-1$ macrotile, so case (1) above applies.
\end{proof}

We need a macrotile $M$ to ``know about'' all the sizes of square that it
is responsible for.  We also need $M$ to know about the location of all partial corners or 
partial sides of squares that intersect $M$, but are too big for $M$ 
to be responsible for them.  Each macrotile will tell its neighbors 
what it knows.  Some of what it knows will be told to all neighbors,
other things it knows will be shared only with its siblings.  

In the basic construction, all the information displayed in 
the colors could be thought of as information that is 
shared equally between the two macrotiles whose 
adjacent sides have that color.  Going forward, it becomes 
convenient to think of some parts of the macrocolor as having a
direction of information flow.  For example, the first
bits of a color could be for information that is flowing 
rightward (for a vertical color) or upward (for a horizontal color), 
while the later bits could be for information that 
is flowing leftward or downward.  In this way, a macrotile can tell its 
neighbors what it knows by using the first part of the color 
on its right and top edges, and the second part of the color 
on its left and bottom edges.  It uses the second part of 
its right and top colors, and the first part of its left and bottom 
colors, to receive messages sent by its neighbors.

With this information flow idea in mind, 
a level $i$ macrocolor for a tile $M$ is a binary 
string which contains the following information:
\begin{enumerate}
\item Location, wire and computation bits as in the basic module (synchronized, space $O(\log N_i)$)).
\item Self-knowledge bits (sending):
\begin{enumerate}
\item A \emph{type}, which is a list of up to two numbers less than or equal to $L_i$, representing all sizes for squares that $M$ is responsible for.  (space: $O(\log L_i)$) 
\item If $i>i_0$, a \emph{justification} for each number above.  The justification is the location of 
the lexicographically least 
level $i-1$ macrotile from $M$'s responsibility zone which reported this information to $M$. The method of reporting is described later. (space: $O(\log N_i)$)
\item A list of up to 4 pixel-locations, measured relative to $M$, of corners of large squares,
together with orientation information for the corner (space: $O(\log L_i)$)
\item A list of up to 2 pixel-locations, measured relative to $M$, of sides of large squares that pass through $M$, together with orientation information.  Since the side passes 
all the way through, the location is just a single $x$-pixel-coordinate or $y$-pixel-coordinate. (space: $O(\log L_i)$)
\end{enumerate}
\item Neighbor-knowledge bits (receiving): The tile $M$ receives self-knowledge 
information as above from its neighbor.
\item Diagonal-neighbor-knowledge bits (sending): The tile $M$ will tell its neighbor $M'$
about the self-information of the two tiles which are adjacent to $M$ and diagonal to $M'$.
\item Diagonal-neighbor-knowledge bits (receiving): The tile $M$ will receive from 
its neighbor $M'$ the self-information of the two tiles which are adjacent to $M'$ 
and diagonal to $M$.
\item Parent-knowledge bits (synchronized, shared only with siblings).
\item Corner message passing bits (outgoing, shared only with siblings).  If $M$'s
self-information includes a partial corner, it sends out the pixel location,
relative to its parent, of that corner 
in the directions of the two arms of the corner.  Or, if $M$ receives a corner message
on one side, it sends the same message out on the opposite side.
\item Corner message receiving bits (incoming, shared only with siblings).
\end{enumerate}

The combined self-knowledge bits make it so that the computation going on inside $M$
will be aware of what is happening in the responsibility zone of $M$'s 
eight neighbors.

The parent-knowledge bits allow $M$ and all of its siblings to be 
aware of what is happening in the responsibility zone of their shared parent.

The corner message passing bits
allow $M$ to use its colors to communicate long distances with 
its siblings, so they can together ascertain the sizes of any
square which has two corners in their shared parent.
This is done by the same method as in \cite{Westrick2017}.  
Any macrotile with a partial corner
must send out a message 
containing the deep coordinate of that side.  To send a message in 
one particular direction (north, south, east or west), 
the macrotile displays the message in 
its macrocolor on just one side.  
Any sibling macrotile receiving the message must
pass the message on (unless the parent boundary is reached, 
in which case the message stops); 
eventually the message may reach 
a sibling macrotile with a matching corner.  Since the recipient 
macrotile also knows the deep coordinates of its own corner, 
it can calculate the distance the message traveled, which 
is the side length of the square.

We now informally describe the rest of
what the algorithm running 
inside $M$ does with all the color information above, followed
by a step-by-step summary of the algorithm.

Based on the color information above, $M$ has many 
ways to learn about sizes of squares that could be in the responsibility 
zone of $M$'s parent.  Macrotile $M$ makes sure 
that its parent-knowledge includes:
\begin{enumerate}
\item Any size that appears in $M$'s self-knowledge or any of 
$M$'s neighbors self-knowledge (this takes care of all squares 
that $M$ and each of its neighbors are responsible for).
\item The combined self-knowledge of $M$ and its eight 
neighbors is also enough to deduce whether there are 
neighbor squares $M_1,M_2$ among these nine, and 
a $Y_1$-square $S$, such that $S$ has one corner in $M_1$
and another corner in $M_2$.  If this occurs, $M$ must 
also make sure that its parent has recorded the size of $S$.
\item Based on the corner message passing, $M$ may learn 
about a square which has one corner in $M$ and another 
corner in a sibling of $M$.  If this happens $M$ must make sure 
its parents knows the size of that square.
\item On the other hand, if $M$ sends corner messages out 
and gets none back, it should make sure that its parent-knowledge
includes knowledge of that corner.
\item If the parent cites $M$ as a source of any of its knowledge, 
$M$ must check that this is true.  If the parent cites 
another source for knowledge provided by $M$, 
then $M$ must check this other source has a location 
lexicographically less than $M$'s own location.
\end{enumerate}

Proposition \ref{prop:responsibility} implies that 
steps (1)-(3) above are enough to ensure that
if $M$'s parent is responsible for a square $S$,
and if all macrotiles at the same level as $M$ 
have accurate self-information, then
$M$'s parent will have accurate self-information.

Now we summarize the algorithm run by macrotiles at all levels. 
Let $A(t)$ be the algorithm which does the following:
\begin{enumerate}
\item (Parsing)
\begin{enumerate}
\item Start reading $t = 0^e10^{i_0}10^i1\dots$ and check that 
$e< i_0\leq i$.
\item Check the rest of $t$ has the right length for four level-$i$ 
macrocolors.
\end{enumerate}
\item (Consistency)
\begin{enumerate}
\item Check location, wire, and computation parts are consistent 
(same as basic module).
\item Check that the same self-knowledge appears in all four colors, 
that all outgoing diagonal-neighbor-knowledge agrees with 
corresponding incoming neighbor-knowledge, and that the
incoming diagonal-neighbor-knowledge is consistent.
\item Check the parent-knowledge is the same on all colors shared 
with siblings, and blank on all colors shared with non-sibling 
neighbors.
\item If self-knowledge indicates a corner, use 
$i_0$, $i$, and the location bits to compute the pixel location 
of the corner relative to the parent.  Check that messages 
are sent out along the corner arms; the content of the message
is the pixel location of the corner relative to the parent.  
If corner messages are incoming,
make sure they either match a corner from the self-knowledge,
or are sent outgoing on the opposite side (exception: do not 
send corner messages to non-siblings).
\item If any self-knowledge, neighbor-knowledge, or 
diagonal-neighbor-knowledge includes $n$ in its type, make sure 
the parent-knowledge includes $n$ in its type.
\item If the combined self-, neighbor-, and diagonal-neighbor-knowledge
reveals that among these 9 macrotiles there is a square and a pair of neighbors 
which each contain a corner of that square, compute the 
size of the square, and make sure the parent-knowledge 
includes that size in its type.
\item If the self-knowledge has a corner and receives a matching
corner-message, compute the size of the associated square and 
make sure the parent-knowledge includes that size in its type,
and that the parent-knowledge does not include that corner.
If the self-knowledge has a corner and does not receive a matching
message, make sure the parent-knowledge includes that corner.
If the self-knowledge has a partial side, make sure the parent-knowledge 
has that partial side if and only if no messages came along it 
in either direction.
\item If the parent cites the location of this macrotile as justification 
for a size, make sure that is true.
\item If the parent cites the location of another macrotile for information 
that this macrotile provided, check that the cited location is lexicographically
less than the location of this macrotile.
\item If the parent claims a partial corner or side that would intersect 
this macrotile, make sure that is true.
\item Check that the type of the parent-knowledge is either empty, 
contains a single number, or contains two adjacent numbers $n, n+1$.
If there is a number $n$ and the parent-knowledge also 
contains a partial corner or side, halt if the partial corner or side is 
already too long compared to $n$.
\end{enumerate}
\item (Sync Levels)
\begin{enumerate}
\item If the location of this tile is on the parent tape, check the 
parent tape begins with $0^e10^{i_0}10^{i+1}1\dots$ and
ends with blanks.
\item If the location of this tile is on the parent tape and can view
the parent's self-knowledge part on the parent tape, 
check that what is seen agrees with 
the parent-knowledge recorded in the colors $c_i$.
\end{enumerate} 
\item If any of the above steps do not check out, halt.  Otherwise, run forever.
\end{enumerate}

When $e\leq i_0\leq i$, the steps described take $\poly (\log L_{i+1})$ time to run,
equivalently in our case to $\poly(i)$.
Fix $e$ to be the index of the algorithm described above; let $i_0\geq e$ 
be large enough that for all $i\geq i_0$, if $t$ begins with 
$0^e10^{i_0}10^i1\dots$, then $A(t)$ finishes all the checks within
$N_{i-1}/2$ steps.  For $i\geq i_0$, define
$$\hat T_i = \{(c_1,c_2,c_3,c_4) : A(0^e10^{i_0}10^i1c_1c_2c_3c_4) \text{ runs forever}\}$$
Let $T$ denote the set of all valid $T_{i_0}$-tilings.

Next we are
going to define a SFT subset of $T \times Y_1$ 
which has $X_{0,1}$ as a factor.  To specify 
which subset, we define a notion of accuracy that can hold between the 
$T$-part and the $Y_1$-part of our SFT.  Recall that 
$R_i$ denotes the pixel-side-length of the responsibility zone 
for a macrotile of level $i$.  

\begin{definition}
Given the self-knowledge $s$ of an $i$-level macrocolor,
 and $\bar a$ a permitted pattern from $Y_1$ of size $R_i \times R_i$,
we say that $s$ is \emph{accurate for} $\bar a$ if, 
considering a level-$i$ macrotile $M$ superimposed 
centrally in $\bar a$,
\begin{enumerate}
\item The type of $s$ equals the set of sizes of squares for which 
$M$ is responsible,
\item Justification of $s$ are accurate: if $n$ is justified by giving the 
location of a level $i-1$ macrotile $M'$, then $M'$ must be the 
lexicographically least child of $M$ which either is 
\begin{itemize}
\item itself 
responsible for an $n$-square, or 
\item has one corner of an $n$-square, the other corner of which 
is contained in another child of $M$, or
\item there is an $n$-square with one corner in each of two adjacent
level $(i-1)$ macrotiles from the $3\times 3$ block of macrotiles 
centered at $M'$.
\end{itemize}
\item Corner and partial side locations of $s$ agree exactly with 
the actual locations of partial corners and sides in the central $L_i\times L_i$ 
region of $\bar a$.
\end{enumerate}
\end{definition}

We will also say that a the self-information of a macrotile 
$\bar c \in \hat T_i$ is accurate for $\bar a$ in cases where 
$\bar a$ is larger than the responsibility zone of a level-$i$ macrotile, 
in cases where it is clear precisely 
where we intend to superimpose $\bar c$ on $\bar a$.

\begin{definition}\label{def:accuracy}
Let $y \in Y_1$, let $u$ be a $L_i\times L_i$ subset of $\mathbb Z^2$,
and let $\bar c$ be a level-$i$ macrocolor. We say that 
\begin{enumerate}
\item $\bar c$ is \emph{self-accurate} at $u$ in $y$ if the self-information of $\bar c$ is 
accurate for $y$ when $\bar c$ is superimposed on $u$.
\item $\bar c$ is \emph{accurate} at $u$ in $y$ if $\bar c$ is self-accurate
and additionally all 8 
neighbor-informations of $\bar c$ are accurate for $y$ when superimposed on the 
corresponding eight $L_i\times L_i$ regions concentrically surrounding $u$.
\end{enumerate}
\end{definition}

\begin{definition}\label{def:tile-accuracy}
Let $y \in Y_1$, let $u$ be a $L_i \times L_i$ subset of $\mathbb Z^2$,
and let $\bar t$ a permitted pattern from $T$ consisting of a single a level-$i$ 
macrotile.  We say that
\begin{enumerate}
\item $\bar t$ is \emph{self-accurate} at $u$ in $y$ if the top-level macrocolor 
of $\bar t$ is self-accurate at $u$ in $y$, and for each $j<i$ and each 
level-$j$ macrotile appearing in $\bar t$, that macrotile is accurate at its 
location in $y$.
\item $\bar t$ is \emph{accurate} at $u$ in $y$ if it is self-accurate, and 
additionally its top-level macrocolor is accurate.
\end{enumerate}
\end{definition}

\begin{prop}\label{prop:up-accurate}
Given $y \in Y_1$, 
and $x \in T$,
if each $T_{i_0}$ tile of $x$ is self-accurate at its location in $y$, then
whenever $x \uhr u$ is a macrotile, it is accurate at $u$ in $y$.
\end{prop}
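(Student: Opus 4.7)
The plan is to prove the claim by strong induction on the macrotile level $i \geq i_0$: every level-$i$ macrotile appearing in $x$ is accurate at its location in $y$.

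For the base case $i = i_0$, a pixel tile has no sub-macrotiles, so self-accuracy of the tile coincides with self-accuracy of its top-level macrocolor, which is given by hypothesis. For accuracy, we must check that the neighbor-information and diagonal-neighbor-information recorded in the tile's colors agree with the self-information of the corresponding adjacent tiles. The direct-neighbor case is immediate from Wang-tile edge matching: the shared color between a tile and its direct neighbor simultaneously encodes what one sends as self-info and what the other receives as neighbor-info. The diagonal-neighbor case goes via a relay through a direct neighbor; the algorithm's internal check that "outgoing diagonal-neighbor-knowledge agrees with corresponding incoming neighbor-knowledge" forces the relay to be faithful, and this check is passed by every tile in $T$. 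Hence every pixel tile is accurate.

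For the inductive step, assume every level-$j$ macrotile in $x$ is accurate for $i_0 \leq j \leq i$, and fix a level-$(i+1)$ macrotile $M$. All sub-macrotiles of $M$ at levels $\leq i$ are accurate by the induction hypothesis, so $M$'s self-accuracy reduces to self-accuracy of its top-level macrocolor. That macrocolor is synchronized, via step (3) of the algorithm, with the parent-knowledge bits carried by $M$'s level-$i$ children. Each child $C$ runs consistency checks that constrain its parent-knowledge to correctly summarize everything it can deduce about $M$'s responsibility zone from its own (accurate, by IH) self-, neighbor-, and diagonal-neighbor-information, plus any incoming corner-messages. By Proposition \ref{prop:responsibility}, every $Y_1$-square $S$ for which $M$ is responsible falls into one of three cases: some level-$i$ descendant of $M$'s zone is responsible for $S$ (caught by step 2(e)); $S$ straddles two adjacent level-$i$ macrotiles inside $M$'s zone (caught by step 2(f), because each such adjacent pair lies in some child's nine-tile neighborhood); or $S$ straddles two non-adjacent children of $M$, in which case the corner-message-passing discipline routes the information to a child with the matching corner that then invokes step 2(g). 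Absence of spurious sizes is enforced by the justification mechanism (steps 2(h)-2(i)): any size in the parent-knowledge must be cited to a specific child that actually witnesses it, with ties broken by lexicographic minimality. The same scheme, using steps 2(g) and 2(j)-2(k), pins down the corner and partial-side records. Once $M$'s top-level self-accuracy is established in this way, the same analysis applied to each of $M$'s eight level-$(i+1)$ neighbors gives their self-accuracy, and the Wang-tile color-sharing argument from the base case promotes $M$ from self-accurate to accurate.

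The main obstacle is the bookkeeping in the inductive step: one must check that the cases (1)-(3) of Proposition \ref{prop:responsibility} really do exhaust the ways $M$ can be responsible for a square, and that each case is captured by exactly one of the algorithm's consistency checks when the children already carry accurate local data. Case (3), covering squares spanning two non-adjacent siblings, is the delicate one, since it is the only case requiring the long-distance corner-message mechanism rather than purely local $3\times 3$ information; verifying that this mechanism yields a faithful size measurement (using the deep pixel coordinates of the two matching corners) is where the technical attention belongs.
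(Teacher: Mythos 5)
Your proposal is correct and follows essentially the same route as the paper: induction on macrotile level, promoting self-accuracy to accuracy via the color-sharing of neighbor-information, establishing the parent's self-accuracy through the children's parent-knowledge checks organized by the three cases of Proposition \ref{prop:responsibility}, and ruling out spurious entries via the justification and corner/side citation checks. The paper's write-up is terser but the decomposition and the role of each consistency check are the same.
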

\begin{proof}
By induction.  Assume that $i_0\leq j < i$ and all level-$j$ macrotiles 
in $x$ have self-information that is 
accurate at their location in $y$.  Then each level-$j$ tile is accurate,
because its neighbor-information is copied from its self-accurate neighbor
tiles.
Let $M$ be a level-$(j+1)$ macrotile.  The children of $M$ which live on the 
tape of $M$ have made sure $M$'s self-knowledge is in the parent 
information part of $M$'s children.
  
Any child of $M$ whose self-information includes $n$ in its type, has forced $M$ 
to include $n$ in its type.  Any child of $M$ who has a square of 
length $n$ straddling two level-$j$ tiles in its $3 \times 3$ neighborhood,
 has discovered this and forced $M$ to include $n$ in its type. 
Any child of $M$ who has a corner of a square of length $n$, the other 
corner of which is in another sibling, has found it via corner message 
passing, and forced $M$ to include $n$ in its type. 
Any child of $M$ who has a corner or partial side which did not collect 
matching messages, has forced $M$ to include that corner or 
partial side.

Conversely, $M$ must include justifications for its type, and each child does
check to make sure that if $M$ uses that child as justification, that this is
accurate.  Children located on partial corners/sides claimed by $M$ 
also check to make sure $M$ has that information correct.
\end{proof}

The preceding proposition shows that the requirement to self-accurately superimpose
$\hat T_{i_0}$-tiles onto element of $\Lambda$ suffices to ensure 
that if any tiling is created, then the macrotiles at all levels have 
accurate information about the squares upon which they are superimposed.
In other words, accuracy at the bottom level flows up.  This motivates 
the following definition.

\begin{definition}
Let $\Lambda_2 = \{(\bar c, a) \in \hat T_{i_0}\times \Lambda : \bar c \text{ is self-accurate for } a\}$.
Let $Y_2$ be the SFT whose restrictions are the color-matching 
restrictions from $\hat T_{i_0}$, and the $2\times 2$ restrictions 
from $Y_1$ on the alphabet $\Lambda$.  
\end{definition}

Towards showing that $X_{0,1}$ is indeed a factor of $Y_2$ via the natural factor 
map, we need the following lemmas.

\begin{lemma}\label{lem:adjacent}
Given $y \in Y_1$, suppose that $\bar t_1$ and $\bar t_2$ are 
level-$i$ macrotiles from $T$ which are self-accurate 
at locations $u_1$ and $u_2$ in $y$.
Then if $u_1$ and $u_2$ are adjacent and the top-level colors 
of $\bar t_1$ and $\bar t_2$ are permitted adjacent, then $\bar t_1$ 
and $\bar t_2$ are permitted adjacent in $T$.
\end{lemma}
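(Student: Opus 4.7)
The plan is to induct on the macrotile level $i$. The base case is $i=i_0$: a level-$i_0$ macrotile is a single pixel tile, and ``permitted adjacent'' just means that the shared color matches, which is the hypothesis. For the inductive step, assume the lemma at level $i-1$, and take self-accurate level-$i$ macrotiles $\bar t_1, \bar t_2$ at adjacent regions $u_1, u_2$ in $y$ whose top-level colors match. It suffices to verify that for each pair of level-$(i-1)$ macrotiles $M_1\subseteq \bar t_1$ and $M_2\subseteq \bar t_2$ along the shared boundary, the shared level-$(i-1)$ color of $M_1$ and $M_2$ matches; for then $M_1, M_2$ are self-accurate (by the self-accuracy of $\bar t_k$, which makes every child accurate, hence self-accurate), and the inductive hypothesis applied to each such pair upgrades the color match to full adjacency at level $i-1$ and below.

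To verify that the shared level-$(i-1)$ color between $M_1$ and $M_2$ matches, I will go through the color components piece by piece using the list of data assembled into a level-$(i-1)$ macrocolor in the construction. The location, wire, and computation bits on the shared edge are synchronized from the top-level level-$i$ data of the two parents, which matches by hypothesis, so these bits agree. The self-knowledge bits sent by $M_k$ across the boundary are, by $M_k$'s accuracy, the true description of $y$ at $M_k$'s location, and the corresponding incoming neighbor-knowledge bits at the other side are, also by accuracy, exactly that same description. The outgoing and incoming diagonal-neighbor-knowledge bits agree by an analogous argument: they describe the self-information of sibling tiles flanking the interface in $\bar t_1$ or $\bar t_2$, whose self-information is determined by $y$ via the accuracy of those tiles. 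Finally, the parent-knowledge and corner-message bits are required by the algorithm $A$ to be blank across non-sibling boundaries, which is the case at the $\bar t_1$--$\bar t_2$ interface since $M_1$ and $M_2$ have different parents. Thus every component of the shared level-$(i-1)$ color matches.

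The main obstacle is the bookkeeping: verifying component by component that every channel of information crossing the $\bar t_1$--$\bar t_2$ interface is determined either by the ground truth $y$ (via accuracy of the children) or by the agreed top-level color, with no contribution from internal state specific to one side. The critical ingredient is the blanking of parent-knowledge and corner-message bits across non-sibling boundaries, which is exactly what prevents internal data particular to $\bar t_1$ from conflicting with internal data particular to $\bar t_2$ along their shared interface. Once these components are confirmed blank and the remaining components are confirmed to be functions of $y$ or of the matching top-level colors, the component-wise color match at level $i-1$ follows, and the induction closes.
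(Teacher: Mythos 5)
Your proposal is correct and follows essentially the same route as the paper: induct on the level, reduce adjacency of $\bar t_1$ and $\bar t_2$ to matching top-level colors of corresponding boundary children (which are accurate because their parents are self-accurate), and check the color components one by one, using accuracy to pin down the knowledge fields and the blanking of parent-knowledge and corner-message bits across non-sibling boundaries. The only minor slip is attributing the agreement of computation bits to synchronization from the parents' top colors; in the construction, boundary children simply do not display computation bits at all, so they agree vacuously — but this does not affect the argument.
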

\begin{proof}
By induction.  The $i=i_0$ case is trivial.  Suppose the lemma 
holds for each macrotile of level $i-1$.  Without loss of generality suppose 
that $\bar t_1$ is left of $\bar t_2$.  It suffices to show that 
$\bar s_1$ is permitted adjacent to $\bar s_2$ whenever $s_1$ 
is a child macrotile of $\bar t_1$ on its right edge and
$\bar s_2$ is the corresponding child macrotile of $\bar t_2$ on its 
left edge.  By definition of self-accuracy, all child macrotiles
found in $\bar t_1$ and $\bar t_2$ are accurate at their location 
in $y$.  So by induction it suffices to check that $\bar s_1$ and 
$\bar s_2$ have matching top-level colors.  Their locations 
match by the way they were chosen.  Their wire bits match, if 
they have wire bits, because the top-level colors of $\bar t_1$
and $\bar t_2$ match.  They do not display computation bits. 
Their self-knowledge and neighbor-knowledge bits match because 
all these fields are set to their unique 
accurate values.  Finally, no parent-information
nor corner-message passing bits are displayed across the parent 
boundary, so they match by both displaying nothing.
\end{proof}

\begin{lemma}\label{lem:down-accurate}
Suppose we have an element $y\in Y_1$ whose squares 
have not-too-different sizes, and an $L_i\times L_i$ region
$u \subseteq \mathbb Z^2$.  Then for any level-$i$ macrocolor 
$\bar c$ that is self-accurate at location $u$ in $y$, there 
is a level-$i$ macrotile $\bar t$ from $T$ which is self-accurate 
at $u$ in $y$ and has top-level macrocolors $\bar c$.
\end{lemma}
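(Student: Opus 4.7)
The plan is to prove the lemma by induction on $i$, constructing the interior of the macrotile $\bar t$ level by level. The base case $i = i_0$ is immediate: a level-$i_0$ macrotile is just a pixel tile, so $\bar t = \bar c$, and self-accuracy follows from the hypothesis.

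For the inductive step, assume the lemma holds at level $i-1$. Given $y$, the region $u$, and a self-accurate top-level macrocolor $\bar c$ at $u$, I would first specify the top-level macrotile's internal structure uniquely. The $N_{i-1}\times N_{i-1}$ grid of children has a forced layout of location bits, wire layout (carrying the bits of $\bar c$ into the computation zone), and parent-knowledge fields (which copy the self-information of $\bar c$ onto the tape region of the children). For each child position, let $\bar c'$ be the unique level-$(i-1)$ macrocolor whose location, parent-knowledge, wire, and (where applicable) computation bits are forced by $\bar c$, and whose self-knowledge and neighbor-knowledge are set to the unique values that are accurate at that child's location in $y$; diagonal-neighbor-knowledge and corner messages are likewise forced by the requirement to agree with adjacent children's self-knowledge and by the corner-message-passing rules.

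The main obstacle is showing that the algorithm $A$ does not halt on any of these forced macrocolors, so that each $\bar c'$ belongs to $\hat T_{i-1}$. Most consistency checks in $A$ are immediate from the way we defined the children: location, wire, parent-knowledge, self/neighbor/diagonal-neighbor agreement, corner-message sending and receiving, and citation of justifications all hold because we set everything to its accurate value, and because Proposition \ref{prop:up-accurate}'s underlying mechanism forces $\bar c$'s self-knowledge to agree with what the children see in $y$. The one nontrivial check is step 2(k): the parent-knowledge type must be empty, a single number, or two adjacent numbers $\{n, n+1\}$, and a partial corner/side recorded in the parent cannot be ``too long'' relative to $n$. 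This is exactly where the hypothesis that $y$'s squares have not-too-different sizes enters: the sizes recorded in $\bar c$ (hence in the parent-knowledge of every child) come from actual squares of $y$, so they satisfy the adjacency constraint, and any partial corner or side recorded by $\bar c$ is part of an actual square in $y$ whose size is within one of the recorded types, so the length check passes.

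Having verified each forced $\bar c'$ lies in $\hat T_{i-1}$, I apply the induction hypothesis to each child position to obtain level-$(i-1)$ macrotiles $\bar t'$ that are self-accurate at those locations in $y$ and have top-level macrocolors $\bar c'$. It remains to check these macrotiles fit together and fit under $\bar c$. Adjacent children's top-level macrocolors are compatible by construction (their matching fields were defined to agree), so Lemma \ref{lem:adjacent} applied at level $i-1$ shows the children are permitted adjacent in $T$. The children along the boundary of the macrotile display the appropriate half of $\bar c$ on their outward-facing edges by the way location and wire bits were routed. Assembling the children into an $N_{i-1}\times N_{i-1}$ block yields the desired level-$i$ macrotile $\bar t$, which is self-accurate at $u$ in $y$ and has top-level macrocolors $\bar c$.
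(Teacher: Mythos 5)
Your proposal is correct and follows essentially the same route as the paper: induct on $i$, assign to each child position the unique macrocolor whose location/wire/computation/parent-knowledge fields are forced by $\bar c$ and whose self-, neighbor-, diagonal-neighbor-knowledge and corner messages are set to their accurate values in $y$, apply the induction hypothesis at each child, and glue with Lemma \ref{lem:adjacent}. Your explicit check that the algorithm does not halt on the forced macrocolors (and that the not-too-different-sizes hypothesis is what makes the final consistency check pass) is a point the paper leaves implicit, but it is an elaboration of the same argument rather than a different one.
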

\begin{proof}
The proof is by induction on $i$,
and the $i_0$ case is trivial.  Given the level $i$-macrocolor
$\bar c$,
consider $u$ as a union of $L_{i-1}\times L_{i-1}$ regions
corresponding to child macrotiles of level $i-1$.
For each such child
macrotile $M$, we assign an accurate macrocolor as follows.
\begin{enumerate}
\item Set the all the self-knowledge, 
neighbor-knowledge and diagonal-neighbor-knowledge
of $M$
to be accurate in $y$.  All these knowledge fields 
are completely deterministic.  
\item Each $M$ copies its parent-knowledge field from $\bar c$.
\item Set the corner-messages also to reflect the reality of $y$:
if $M$ contains a partial corner or side which has a matching 
corner in the same parent, $M$ should receive a message; 
if $M$ receives a message or has a corner, $M$ should send 
a message.
\item The location part of $M$'s colors is uniquely determined by 
its location in $u$.
\item The wire parts and computation parts of 
$M$'s colors are uniquely determined by 
$\bar c$.
\end{enumerate}
Observe these macrocolors are designed so that they are accurate 
and so that any two child macrotiles with adjacent locations 
have matching macrocolors on the side they share.  By induction 
assume we have a self-accurate macrotile with the chosen macrocolor at each 
child location.  By Lemma \ref{lem:adjacent}, these child macrotiles 
are permitted adjacent, so together they form a level-$i$ macrotile 
$\bar t$ from $T$.  The wire parts of the child macrocolors were 
chosen to ensure that $\bar t$'s top macrocolor is $\bar c$.
\end{proof}

\begin{definition}
Let $h_2:Y_2\rightarrow 2^{\mathbb Z^2}$ be the factor map 
which maps $(\bar c, a)$ to 0 if $a$ is a boundary symbol and 1 otherwise.
\end{definition}

That is, $h_2$ recovers the square boundaries from the $Y_1$ 
part of $x \in X_2$.

\begin{prop}\label{prop:Xfactor} The subshift $X_{0,1}$ is sofic, 
and furthermore $h_2(Y_2) = X_{0,1}$.  
\end{prop}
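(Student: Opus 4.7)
The plan is to prove the set equality $h_2(Y_2) = X_{0,1}$; soficity of $X_{0,1}$ then follows immediately since $Y_2$ is a $\mathbb Z^2$-SFT by construction and $h_2$ is a sliding block code.

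For $h_2(Y_2) \subseteq X_{0,1}$, I take an arbitrary $(\bar c, a) \in Y_2$. The component $a$ lies in $Y_1$, so restrictions (1) and (2) of $F_{0,1}$ are automatic in $h_2(\bar c, a)$, and the work is in restriction (3). Suppose for contradiction that $a$ realizes two finite squares of sizes $n$ and $m$ with $|n-m|>1$. They occupy a bounded region, so for some sufficiently high level $i$ both squares sit inside the body of a single level-$i$ macrotile $M^*$. Iterating Proposition \ref{prop:responsibility}(1) upward from the first level at which some descendant becomes responsible for each square makes $M^*$ itself responsible for both. Because every pixel tile is self-accurate by definition of $\Lambda_2$, Proposition \ref{prop:up-accurate} forces $M^*$'s self-knowledge type to list both $n$ and $m$. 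But every child of $M^*$ then inherits this as its parent-knowledge and runs the algorithm's final check that the parent-knowledge's type consists of at most two adjacent integers. Since $|n-m|>1$ this check fails and $A$ halts, contradicting the existence of $\bar c$.

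For $X_{0,1} \subseteq h_2(Y_2)$, let $x \in X_{0,1}$ and let $y \in Y_1$ be its unique arrow-lift. Since $x$ obeys $F_{0,1}$, the squares of $y$ use at most two consecutive sizes, so at every level every macrotile is responsible for at most two adjacent sizes and no consistency check in $A$ ever fires. For each $n \in \mathbb N$, pick a level $i_n$ and an $L_{i_n} \times L_{i_n}$ cell $u_n \supseteq [-n,n]^2$ (any grid alignment is fine), and apply Lemma \ref{lem:down-accurate} to assemble a self-accurate level-$i_n$ macrotile over $u_n$. By Definition \ref{def:tile-accuracy} its pixel tiles are individually self-accurate at their locations in $y$, so restricting to $[-n,n]^2$ yields a valid $Y_2$-pattern projecting onto $x$ there. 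A standard diagonal compactness argument then extracts a global $\bar c$ with $(\bar c, y) \in Y_2$ and $h_2(\bar c, y) = x$.

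The main obstacle is the compactness step: the finite approximations may a priori come with different choices of macrotile grid alignment, so some care is needed to ensure global coherence. This is handled by the usual compactness over a finite alphabet, which automatically selects a consistent alignment in the limit. Once that alignment is fixed, self-accuracy combined with the deterministic DRS information flow (wires, parent-knowledge propagation, corner-message passing) uniquely pins down every remaining bit of every pixel tile, so the local patterns from different approximations are mutually consistent and the limit object is a genuine element of $Y_2$.
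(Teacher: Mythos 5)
Your argument follows the paper's proof of this proposition almost exactly: Lemma \ref{lem:down-accurate} plus compactness for the inclusion $X_{0,1} \subseteq h_2(Y_2)$, and Proposition \ref{prop:up-accurate} plus the parent-knowledge consistency check for the reverse inclusion. Both directions are sound as far as they go, and your remarks about grid alignment in the compactness step are a reasonable (if routine) elaboration of what the paper leaves implicit.

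There is, however, one case you do not cover in the inclusion $h_2(Y_2) \subseteq X_{0,1}$. The restrictions $F_{0,1}$ also forbid the coexistence of a finite $n$-square with an infinite, degenerate square (as the paper notes after Definition \ref{defn:type}, it is forbidden to have both an $n$-square and another square that appears to be very large). Your argument assumes both offending squares are finite, so that ``they occupy a bounded region'' and eventually both lie inside the body of a single macrotile responsible for both; this step fails when one of the squares is infinite, since no macrotile is ever responsible for an infinite square. The paper handles this case separately: a sufficiently large macrotile $M^*$ is responsible for the finite $n$-square while also recording, in its partial-corner and partial-side fields, an arbitrarily long partial side or corner of the infinite square, and the algorithm's final consistency check --- halt if a partial corner or side is already too long compared to a recorded size $n$ --- then kills the tiling. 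You should add this case; the rest of your proof stands.
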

\begin{proof}  
Given $y \in Y_1$ with squares of not-too-different sizes,
by Lemma \ref{lem:down-accurate}, arbitrarily large accurate 
macrotiles from $T$ 
can superimposed on a central region of $y$.  
It follows by compactness that there is an infinite $T_{i_0}$-tiling
which can be superimposed accurately on $y$.  In particular, 
the superposition is self-accurate at the pixel level.  Therefore there is 
$x \in Y_2$ such that $h_2(x)$ gives the square outlines of $y$.

On the other hand, consider an element $y \in Y_1$ with squares of size 
difference more than 1 (the case of a finite square and an infinite square 
is included in this possibility).  Suppose for contradiction that there 
is $x \in T$ such that each pixel of $x$ is self-accurate at its location 
in $y$.  Then there is a 
macrotile $M$ in $x$ large enough to be responsible for two squares 
of too-different sizes (or large enough to be responsible for a finite 
square and a very long partial corner or side of an infinite square).
By Proposition \ref{prop:up-accurate}, the part of $x$ which corresponds
to the responsibility zone of $M$ is accurate.  Therefore, the self-knowledge
of $M$ includes the two squares.  Therefore the children of $M$
see the two squares in their parent-knowledge.  Therefore the children 
of $M$ halt at the last consistency step of the algorithm.  Contradiction.
\end{proof}

If there is a macrotile that 
has $n$ written on its parameter tape, 
then for every pixel location $t \in \mathbb Z^2$, every sufficiently 
large macrotile containing $t$ also has $n$ written on its
 parameter tape.  Therefore, an element of $Y_2$ could 
 be said to have a limiting \emph{information type}, which is 
 by definition equal to 
 the collection of all sizes recorded on any parameter tape, 
 or $\infty$ if no finite size is ever recorded.   Observe that the 
 information type of an element $x\in Y_2$ is equal to
 $\type(h_2(x))$.

\subsection{Preserving TCPE rank}
We have seen that $X_{0,1}$ is sofic, but so far only via an SFT extension 
$Y_2$ that does not have TCPE.  The DRS-type 
construction is tiling-based, so any subshift that uses it 
factors onto a zero-entropy subshift that retains only the 
tiling structure.  Below, we describe how to modify $Y_2$ 
to fix this problem.

The idea is to imagine the tiling structure is printed on 
a piece of fabric, which can be pinched and stretched 
so that the alignment of tiles in one region has no bearing 
on the alignment of tiles far away.  Now the tiling structure 
itself bears entropy (information about pinching and stretching), 
so any factor map which retains 
any part of the computation also retains the 
entropy of the tiling structure on which the computation lives. 
This idea is made precise below with an analysis
of a construction by Pavlov \cite{Pavlov2018}.  He shows the 
following.

\begin{theorem}[Pavlov \mbox{\cite[Theorem 3.5]{Pavlov2018}}]
For any alphabet $A$, there is an alphabet $B$ and a map taking 
any orbit of a point in $A^{\Z^2}$ to a union of orbits of points in 
$B^{\Z^2}$ with the following properties:
\begin{enumerate}
\item $O(x) \neq O(x') \implies f(O(x)) \cap f(O(x')) = \emptyset$.
\item If $W \subseteq A^{\Z^2}$ is a subshift (resp. SFT), then $f(W)$
is a subshift (resp. SFT).
\item If $W$ is a $\Z^2$-subshift with a fully supported measure, 
and there exists an $N$ such that for every $w, w' \in L(W)$, 
there are patterns $w=w_1,w_2,\dots, w_N=w$ such that for all 
$i \in [1, N)$, $w_i$ and $w_{i+1}$ coexist in some point of $W$, 
then $f(X)$ has TCPE.
\end{enumerate}
\end{theorem}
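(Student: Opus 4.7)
I would construct $B$ by augmenting $A$ with a new ``gap'' symbol $\ast$ (together with some auxiliary book-keeping bits), and define $f$ so that an orbit $O(x) \subseteq A^{\Z^2}$ is sent to the union of orbits of all configurations in $B^{\Z^2}$ obtained from $x$ by inserting entire ``gap rows'' and ``gap columns''---horizontal or vertical lines every cell of which is $\ast$---at arbitrary subsets of $\Z$. In the fabric metaphor, these gap lines are the pinches: they completely decouple the parts of the configuration on either side.

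Property (1) is then immediate: shifts of $x$ induce shifts of the resulting $B^{\Z^2}$-configurations (after corresponding shifts of the gap-index sets), and from any $y \in f(O(x))$ one recovers $x$ up to shift by deleting all gap rows and columns and reindexing, so this compression is an orbit invariant distinguishing different source orbits. For property (2), I would encode the conditions ``the $\ast$-positions form a union of full rows and full columns'' and ``the compressed configuration lies in $W$'' by enriching $B$ with propagation bits that are constant along each row and column (enforceable by $2 \times 2$ rules) and pointer bits at each non-gap cell identifying its compressed neighbors in the four cardinal directions. Then each forbidden pattern $w$ of $W$ becomes a finite local rule in the pointer graph, so that when $W$ is an SFT the enlarged alphabet and rule set remain finite.

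The substantial step is property (3), for which I would use Blanchard's characterization together with the sufficient entropy-pair condition stated earlier in the paper. Given $y_1, y_2 \in f(W)$ and finite patterns $v_i = y_i \uhr S$ for $i=1,2$, the goal is to show $v_1$ and $v_2$ are independent in $f(W)$. Tile $\Z^2$ by large identical plots, place either $v_1$ or $v_2$ at the center of each plot (free choice per plot), surround each central pattern with a thick frame of $\ast$-rows and $\ast$-columns to isolate it completely from neighboring plots, and then independently fill the interior of each plot. The gap frames ensure that the compression-to-$W$ condition in each plot depends only on that plot, so we are free to use the chain hypothesis---interpolating between the underlying $W$-patterns of $v_1$ and $v_2$ through a common reference pattern guaranteed by full support---to complete each plot to a valid $f(W)$-fragment independently of the others. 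Hence $v_1$ and $v_2$ are independent, every pair $(y_1, y_2)$ is an entropy-or-equal pair, and $f(W)$ has TCPE. The main technical obstacle I anticipate is property (2): making the gap-row/column structure and the compressed $W$-rule both locally verifiable requires careful design of auxiliary propagation bits (so, e.g., the pointer graph skips correctly across runs of gap cells of arbitrary width), and verifying that the local rule system remains finite when $W$ is an SFT.
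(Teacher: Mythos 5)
There is a genuine flaw in your approach to property (3), and it is exactly the obstruction the paper warns about when it notes that ``no shift with TCPE can contain a rigid grid.'' Your gap rows and gap columns are \emph{straight, bi-infinite lines}: by your own design for property (2), the $\ast$-positions must form a union of full rows and full columns. Consequently the map sending each configuration of $f(W)$ to the indicator function of its gap rows is a topological factor onto a subshift of $\{0,1\}^{\Z^2}$ whose points are constant along rows; that factor has at most $2^n$ patterns on an $n\times n$ window, hence zero entropy, and it is nontrivial because the gap sets are freely chosen. So $f(W)$ has a nontrivial zero-entropy factor and \emph{cannot} have TCPE, regardless of any hypothesis on $W$. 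The same rigidity sinks your independence argument directly: a pattern $v_1$ of $f(W)$ may contain a segment of a gap row at some relative height, and that segment must extend to a full row passing through every other plot in the same horizontal band; if $v_2$ has no gap row at that height, you cannot place $v_1$ and $v_2$ independently in plots sharing a band. The ``frames'' of $\ast$'s you propose around each plot are likewise not finite local objects --- they force full lines through all neighboring plots.

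This is precisely why Pavlov's construction (which the paper cites rather than reproves, sketching only the strengthened version) does something different: the grid is printed on ``fabric'' and realized by \emph{wiggly, locally perturbable ribbons}, so that the alignment of the structure in one region carries no information about the alignment far away. The positional data of the ribbons then itself bears positive entropy, so any factor that remembers the grid also remembers that entropy, and independent finite perturbations of the ribbons near well-separated locations are what realize the entropy pairs. In short, the decoupling must be achieved by local deformability of the separating structure, not by inserting globally rigid separating lines. Your properties (1) and (2) are plausible as stated, but the construction would have to be replaced wholesale to make (3) true.
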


However, in light of the subsequent work by Barbieri and 
Garc\'ia-Ramos \cite{BGR-TA}, stronger claims 
should be made about Pavlov's construction. 
We make the following definition.

\begin{definition} Let $W$ be a $\Z^2$-subshift.  We say $(x,x')\in W$ is 
a \emph{transitivity pair} if for every pair of patterns $v, v'$ that 
appear in $x$ and $x'$ respectively, $v$ and $v'$ coexist
in some point of $W$.
\end{definition}

Examination of 
Pavlov's proof shows he also proved the following.  The point is 
that this transformation preserves many things about 
an SFT with a fully supported measure, while upgrading all 
transitivity pairs into entropy-or-equal pairs.

\begin{theorem}[essentially Pavlov \mbox{\cite[Theorem 3.5]{Pavlov2018}}]\label{thm:pavlov-modified}
For any alphabet $A$, there is an alphabet $B$ and a map taking 
any orbit of a point in $A^{\Z^2}$ to a union of orbits of points in 
$B^{\Z^2}$ with the following properties:
\begin{enumerate}
\item $O(x) \neq O(x') \implies f(O(x)) \cap f(O(x')) = \emptyset$.
\item If $W$ is a subshift (resp. SFT), then $f(W)$
is a subshift (resp. SFT).
\item If $W$ is a subshift with a fully supported measure, 
then $f(W)$ has a fully supported measure.
\item If $W$ is a subshift with a fully supported measure, and $x,x' \in W$, 
the following are equivalent:
\begin{enumerate}
\item[(i)] $(x,x')$ is a transitivity pair in $W$.
\item[(ii)] $(y,y')$ is an entropy-or-equal 
pair  in $f(W)$ for some $(y,y')\! \in\! f(x)\!\times\!f(x')$
\item[(iii)] $(y,y')$ is 
an entropy-or-equal pair in $f(W)$ for every $(y,y')\! \in\! f(x)\!\times\! f(x')$
\end{enumerate}
\item If $W$ is a subshift, 
$A \subseteq W$ is a shift-invariant set, and $x \in W$, then 
$x\in \overline A$ 
if and only if $f(x)\subseteq \overline{f(A)}$.
\end{enumerate}
\end{theorem}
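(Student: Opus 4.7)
The plan is to recall Pavlov's explicit construction of the map $f$ from \cite[Section 3]{Pavlov2018} and then verify each clause by examining what his argument actually shows. Pavlov's $f$ enlarges each symbol of $A$ into a rectangular block whose side lengths are governed by an auxiliary ``fabric'' coordinate carried in the extended alphabet $B$; crucially, the fabric coordinates in disjoint regions of $\Z^2$ can be varied essentially independently. Clauses (1) and (2) are proved by Pavlov as stated: orbit disjointness comes from the fact that the underlying $A$-coordinate is recovered by collapsing each fabric block to a single symbol, and the forbidden-pattern description of $f(W)$ is local in the fabric-enriched alphabet.

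For clause (3), starting from a fully supported measure $\mu$ on $W$, one pushes $\mu$ forward through the map that chooses fabric parameters according to a shift-invariant distribution with full support on the legal stretch vectors. Since every legal $f(W)$-pattern is realized by some combination of $W$-pattern and fabric choice, the pushforward has full support. Clause (5) is then a direct consequence of the collapse map being continuous: $x$ lies in the closure of a shift-invariant $A \subseteq W$ iff arbitrarily long central patterns of $x$ extend to $A$-points, which, after accounting for the independent freedom in choosing fabric, is exactly the statement that every $y \in f(x)$ is a limit of points in $f(A)$.

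Clause (4) is the heart of the matter and refines Pavlov's original entropy argument. For (i)$\Rightarrow$(iii), fix any $(y,y') \in f(x) \times f(x')$ and disjoint closed neighborhoods $K_y \ni y$ and $K_{y'} \ni y'$. A point in $K_y$ is determined on some central square by a combination of an $A$-pattern appearing in $x$ and a choice of fabric coordinates, and similarly for $K_{y'}$. The transitivity pair hypothesis says these underlying $A$-patterns can be made to coexist in some point of $W$ at a positive density $J \subseteq \Z^2$ of placements. Lifting such a $W$-point via $f$ using independent fabric choices on each placement produces, for every $I \subseteq J$, a point of $f(W)$ realizing the $K_y$-vs-$K_{y'}$ choice dictated by $I$; this yields $2^{|I|}$-many distinct patterns on large windows and hence $h(f(W),\{K_y^c,K_{y'}^c\}) > 0$.

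For the converse (ii)$\Rightarrow$(i), if $(x,x')$ is not a transitivity pair, fix patterns $v$ in $x$ and $v'$ in $x'$ that never coexist in $W$. Choose $K_y, K_{y'}$ small enough that any point in $K_y$ (resp.\ $K_{y'}$) has the $f$-image of $v$ (resp.\ $v'$) in its central region; the collapse map then shows no point of $f(W)$ can contain both, so the cover $\{K_y^c,K_{y'}^c\}$ has zero entropy, contradicting entropy pairhood. Finally, (ii)$\Leftrightarrow$(iii) follows since (iii) trivially implies (ii), while (i)$\Rightarrow$(iii) combined with (ii)$\Rightarrow$(i) gives (ii)$\Rightarrow$(iii). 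The main obstacle I expect is the uniformity in (i)$\Rightarrow$(iii): one must verify that Pavlov's fabric is flexible enough that the independent-stretch argument works for \emph{every} pair of lifts $(y,y')$, regardless of how rigidly their fabric coordinates happen to be specified; this requires a careful bookkeeping of which fabric degrees of freedom can be freely reassigned within a given $f(W)$-orbit, but ultimately falls out of the local nature of the fabric constraints.
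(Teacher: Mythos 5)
Your overall route matches the paper's: cite Pavlov for (1)--(3), handle (5) by matching ribbon/fabric structures, and prove (4) via the chain (i)$\Rightarrow$(iii)$\Rightarrow$(ii)$\Rightarrow$(i), with the forward implication carried by an independent-choice argument at a positive-density set of locations. Your contrapositive for (ii)$\Rightarrow$(i) is a harmless rephrasing of the paper's direct argument (entropy-or-equal pairs are transitivity pairs in $f(W)$, and coexistence pulls back along the collapse map). However, there are two genuine gaps in your (i)$\Rightarrow$(iii).

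First, you write that the transitivity pair hypothesis gives coexistence of the underlying patterns ``at a positive density $J\subseteq \Z^2$ of placements.'' It does not: transitivity pairhood only guarantees that $v$ and $v'$ coexist in \emph{some} point $x''\in W$, possibly at a single pair of locations. The positive density is exactly where the fully supported measure hypothesis of clause (4) enters --- a pattern containing both $v$ and $v'$ has positive measure, hence appears with positive density in a generic point. Your sketch never invokes full support in clause (4), so as written the entropy estimate has no positive-density set to stand on. Second, you correctly identify, but do not resolve, the crux: the patterns $w=y\uhr S$ and $w'=y'\uhr S$ come equipped with their own (possibly rigidly specified) ribbon coordinates, and one cannot simply drop them into an arbitrary lift of $x''$. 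The paper's resolution is concrete: choose the lift $y''\in f(x'')$ whose ribbons are perfectly horizontal and vertical, spaced $3$ apart, select an infinite positive-density family of far-apart occurrences of the coexistence pattern, and at each one \emph{finitely perturb} the regular ribbons to wiggle a copy of $w$ or $w'$ into place; the perturbations are local and the occurrences are far apart, which is what makes the choices independent. Deferring this to ``the local nature of the fabric constraints'' leaves the central step of the theorem unproved.
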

\begin{proof}
The proofs of (1)-(3) are exactly as in the original.  The proof of (4) is 
also essentially there, if a bit roundabout.  Here we sketch a direct route to 
(4), using the same language as in the original proof.

$(i)\Rightarrow (iii)$.  
First, suppose $(x,x')$ is a transitivity pair.  Let $(y,y') \in f(x)\times f(x')$,
with $y \neq y'$, 
and fix $S$ and $w=y \uhr S$ and $w'=y'\uhr S$ such that $w \neq w'$.  
Let $v,v'$ be patterns in $W$ such that $v$ and $v'$
induce $w$ and $w'$.  Then $v$ appears in $x$ 
and $v'$ appears in $x'$, so there is a point $x'' \in W$ in which 
$v$ and $v'$ both appear.  Because $W$ has a fully supported 
measure, we may assume that a pattern $v'''$ containing both 
$v$ and $v'$ appears with positive 
density in $x''$. 
Let $y'' \in f(x'')$ be chosen so that all ribbons of $y$ are 
perfectly horizontal or vertical and spaced 3 apart.  
Pick an infinite, positive density subset of 
patterns in $y''$ which are induced by $v'''$ and which 
are far enough apart.  At each of these locations, 
finitely perturb the ribbons to wiggle a copy of 
$w$ or $w'$ (independent choice)
into those locations.  The independent choice is possible 
because the locations are far enough apart.

$(ii) \Rightarrow (i)$.  If $(y,y') \in f(x) \times f(x')$ is 
an entropy-or-equal pair, it is a transitivity pair.  
So if $v$ and $v'$ are patterns that appear in $x$ 
and $x'$,  they induce $w$ and $w'$ in $y$ and $y'$,
and there 
is some $y'' \in f(W)$ in which $w$ and $w'$ coexist.  
Then $v$ and $v'$ 
coexist in each element of $f^{-1}(y'')$.

For (5), the forward direction follows for any given 
$y \in f(x)$ by considering elements of $f(A)$ 
that have the same ribbon structure\footnote{In \cite{Pavlov2018},
``the first two coordinates of $y$''.} as $y$,
and the reverse direction also follows by
restricting attention to a single fixed ribbon 
structure.
\end{proof}

For any subshift $W$, its transitivity pair relationship is closed.
So we may define a \emph{generalized transitivity} hierarchy 
similar to the TCPE hierarchy as follows.

\begin{definition}\label{def:gtr}
If $W$ is a subshift, define $T_1 \subseteq W^2$ to be its set of 
transitivity pairs.  Then for each $\alpha < \omega_1$ define 
$T_{\alpha+1}$ to be the transitive closure of $T_\alpha$ if $T_\alpha$ 
is closed, and to be the closure of $T_\alpha$ otherwise. 
For $\lambda$ a limit, define $T_\lambda = \cup_{\alpha<\lambda} T_\alpha$.
We say that $W$ is \emph{generalized transitive} if there is some 
$\alpha$ such that $T_\alpha = W^2$, in which case the least such 
$\alpha$ is called the \emph{generalized transitivity rank} of $W$.
\end{definition}

Observe that the shifts with transitivity rank 1 are exactly the 
transitive ones.  We have the following relationship between transitivity rank 
and TCPE rank.

\begin{theorem}
Suppose that $W$ is a subshift with a fully supported measure.  
Let $f$ be the operation of 
Theorem \ref{thm:pavlov-modified}.  Then $f(W)$ has TCPE 
if and only if $W$ is generalized transitive, and in this case the 
TCPE rank of $f(W)$ and the transitivity rank of $W$ coincide.
\end{theorem}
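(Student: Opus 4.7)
The plan is to prove by transfinite induction on $\alpha$ that the two hierarchies correspond via the image map $\Phi(R) := \bigcup_{(x,x') \in R} f(x) \times f(x')$, establishing the identity $E_\alpha = \Phi(T_\alpha)$ together with the ``for some iff for every'' dichotomy furnished at stage $1$ by Theorem \ref{thm:pavlov-modified}(4):
\[
(x,x') \in T_\alpha \iff (f(x) \times f(x')) \cap E_\alpha \neq \emptyset \iff f(x) \times f(x') \subseteq E_\alpha.
\]
Once this is in hand, the two ranks coincide: $T_\alpha = W^2$ iff every pair is sent into $E_\alpha$, and by property (1) (disjointness of $f$ on distinct orbits) this is equivalent to $E_\alpha = f(W)^2$. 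The base case is precisely Theorem \ref{thm:pavlov-modified}(4), and the limit stage is immediate since $\Phi$ commutes with unions.

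For the successor stage, I first verify that ``is an equivalence relation'' and ``is closed'' transfer across $\Phi$, so that the two hierarchies always apply the \emph{same} operation at each level. Reflexivity and symmetry are automatic. For transitivity, if $(y,y'),(y',y'') \in E_\alpha$ with $(x,x') \in T_\alpha$ and $(x'^*,x'') \in T_\alpha$ witnessing these via $y' \in f(x') \cap f(x'^*)$, then property (1) forces $O(x') = O(x'^*)$, so after replacing $x'^*$ by a shift of $x'$ the chain lifts to $W$. Thus $E_\alpha$ is transitive iff $T_\alpha$ is, and transitive closures on both sides agree via $\Phi$.

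The main step, and the principal obstacle, is the closure operation. If $T_\alpha$ is not closed, choose $(x_n,x_n') \in T_\alpha$ with $(x_n,x_n') \to (x,x')$. By the inductive hypothesis, $f(x_n) \times f(x_n') \subseteq E_\alpha$ for each $n$. Apply Theorem \ref{thm:pavlov-modified}(5) coordinatewise with $A$ the orbit of each $x_n$ (respectively $x_n'$): any $y \in f(x)$ lies in $\overline{\bigcup_n f(x_n)}$, and similarly for $y' \in f(x')$. This places every pair in $f(x) \times f(x')$ inside $\overline{E_\alpha} = E_{\alpha+1}$, giving $\Phi(\overline{T_\alpha}) \subseteq E_{\alpha+1}$. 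For the reverse inclusion, given $(y_n,y_n') \to (y,y')$ with $(y_n,y_n') \in E_\alpha$, lift to $(x_n,x_n') \in T_\alpha$ with $y_n \in f(x_n)$, $y_n' \in f(x_n')$; pass to a subsequence so $(x_n,x_n') \to (x,x')$ in $W^2$. Then $(x,x') \in \overline{T_\alpha}$, and combining Theorem \ref{thm:pavlov-modified}(5) with property (1) (which pins down the unique orbit whose $f$-image contains $y$) ensures $y \in f(x)$ and $y' \in f(x')$, so $(y,y') \in \Phi(\overline{T_\alpha})$.

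The delicate point throughout is interpreting property (5), which is stated for shift-invariant subsets of $W$, componentwise on $W^2$; the routine check is that Pavlov's ribbon construction is genuinely coordinatewise so that the ``approximation of $f(x)$ by $f(x_n)$'' extends to the product. Once the closure step is settled, the induction closes, and the equivalence of ranks (hence of the TCPE property) follows directly from the base equivalence of $T_\alpha = W^2$ and $E_\alpha = f(W)^2$.
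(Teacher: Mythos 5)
Your overall strategy is the same as the paper's: induct on $\alpha$ to show $E_\alpha=\Phi(T_\alpha)$, get the base case from part (4), pass limits trivially, and at successors check that closedness and equivalence-relation-hood transfer so both hierarchies perform the same operation. The rank comparison at the end is also handled the same way. The problem is in the closure step, which you correctly identify as the main obstacle but do not actually close.

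Two specific points fail if Theorem \ref{thm:pavlov-modified} is used as a black box. First, applying property (5) \emph{coordinatewise} gives you $f(x)\subseteq\overline{\bigcup_n f(x_n)}$ and $f(x')\subseteq\overline{\bigcup_n f(x_n')}$, but membership in $\overline{E_\alpha}$ requires a single sequence of \emph{pairs} $(z_k,z_k')\in f(x_{n_k})\times f(x_{n_k}')$ with \emph{matching} indices converging to $(y,y')$; separate approximations of the two coordinates may use incompatible indices, so $\Phi(\overline{T_\alpha})\subseteq\overline{E_\alpha}$ does not follow. The paper gets this synchronization by reaching into the construction: it maps the whole sequence $(x_n,x_n')\to(x,x')$ over to $f(W)$ ``using the same ribbon structure and same origin,'' which is exactly the content your ``routine check'' would have to supply. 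Second, your reverse inclusion asserts that $y_n\in f(x_n)$, $y_n\to y$, $x_n\to x$ forces $y\in f(x)$. That is a closed-graph property of the set-valued map $f$, and it is not derivable from (1)--(5): property (5) only tells you that $f(x)\subseteq\overline{f(A)}$ is equivalent to $x\in\overline A$ for shift-invariant $A$, and knowing that the single point $y$ lies in $\overline{f(A)}$ does not let you run that equivalence. The paper avoids needing this entirely in the ``$T_\alpha$ closed $\Rightarrow$ $E_\alpha$ closed'' direction by a different argument: since each $T_\alpha$ is shift-invariant in the doubled sense, the complement of a closed $T_\alpha$ is a union of sets $U_v\times U_{v'}$ with $U_v$ open and shift-invariant, and parts (1) and (5) show $f$ carries open shift-invariant sets to open shift-invariant sets, so the complement of $E_\alpha$ is open. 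If you want to keep your sequence-based route, you must either prove the closed-graph and joint-approximation properties from Pavlov's construction explicitly, or switch to the shift-invariance argument; as written, the induction does not go through.
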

\begin{proof}
For any $T \subseteq W^2$, let $f(T)$ denote $\bigcup_{(x,x') \in T} f(x) \times f(x')$.
Let $E_\alpha$ denote the sets of the TCPE hierarchy on $f(W)$. 
We claim that for all $\alpha \in [1,\omega_1)$, we have $f(T_\alpha) = E_\alpha$.
The fact that $f(T_1) = E_1$ follows from Theorem \ref{thm:pavlov-modified}
part (4).  The limit step cannot cause any discrepancy.  In consideration of 
the successor step, suppose that $f(T_\alpha) = E_\alpha$ for some $\alpha$.

We claim $T_\alpha$ is closed if and only if $E_\alpha$ is closed. 
Observe $T_1$ is 
shift-invariant in the sense that $(x,x')\in T_1$ implies 
$O(x) \times O(x') \subseteq T_1$, and therefore each $T_\alpha$ is 
also shift-invariant in that sense (this shift-invariance is not destroyed 
by topological or transitive closures, or by limits).  Therefore, if $T_\alpha$
is closed, its complement is a union of sets of the form 
$U_v\times U_{v'}$, where
$v$ and $v'$ are patterns and
$$U_{v} := \{x: \text{ $v$ appears in $x$}\}.$$
Therefore the complement of $E_\alpha$ is a union of sets of the form 
$f(U_v) \times f(U_{v'})$.  Theorem \ref{thm:pavlov-modified} parts (1) and (5) 
imply that $f$ maps 
open shift-invariant sets to open shift-invariant sets.  Since the $U_v$ 
are shift-invariant, it follows that if $T_\alpha$ is closed, then so 
is $E_\alpha$.  On the other hand, if $T_\alpha$ is not closed, there are 
$(x,x') \not\in T_\alpha$ but $(x_n,x_n') \in T_\alpha$ with $(x_n,x_n')\rightarrow (x,x')$. 
Mapping these points all over to $f(W)$ using the same ribbon structure 
and same origin yields the analogous situation in $E_\alpha$.  This 
completes the proof of the claim that $T_\alpha$ is closed if and only if 
$E_\alpha$ is closed.

If $T_\alpha$ is closed, then it is easy to check that $(x,x')\in T_{\alpha+1}$ 
if and only if $f(x)\times f(x') \subseteq E_{\alpha+1}$.  On the other hand, 
if $T_\alpha$ is not closed, then an argument as above shows that 
$(x,x') \in T_{\alpha+1}$ implies $f(x) \times f(x') \subseteq E_{\alpha+1}$ 
and vice versa.
\end{proof}

Therefore it would suffice to show that
the SFT $Y_2$ defined in the previous section has a fully supported
measure and has generalized transitivity rank 3.
Unfortunately, it is possible that $Y_2$ does not have a fully supported 
measure.  However, the reasons are non-essential and 
we can modify the tiling construction to ensure a fully supported measure.

\subsection{An SFT with a fully supported measure and generalized transitivity rank 3}\label{sec:drs-trap}
We now describe why $Y_2$ may not have a fully supported 
measure, and how to fix this.
We would like to show that given an element $x \in Y_2$ 
and a pattern $v \in x$, that there is some $x' \in Y_2$ such 
that $v$ appears with positive density in $x'$.  For simplicity, 
let us consider a type $\infty$ element of $Y_2$ in which the 
$Y_1$ part uses only the symbol $\downarrow$.  
Consider its tiling structure.  
We could imagine a very unlucky pattern $v$ with the following properties:
\begin{itemize}
\item $v$ is a macrotile located in
in the computation part of its parent
\item The parts of the parent computation that $v$ sees are actually a 
rare combination that occurs in only one of all possible parent macrotiles
\item The uniquely implied parent macrotile also has these three properties.
\end{itemize}
Such unlucky $v$ may well exist, and if it exists it could occur at most 
one time in any configuration.  

To fix this we use a trick similar to one Durand and Romashchenko
used for a similar problem in \cite{DurandRomashchenko-TA}.  We make a 
``trap zone'' of size $2\times 2$ child macrotiles,
located in a part of the parent macrotile which is out of the computation 
zone and out of the way of the wires.  
Where exactly to put the trap zone can be efficiently computed 
in the same way as the wire layout is computed.  The idea 
is that any $2\times 2$ block of level-$i$ macrotiles that is 
permitted to appear at all, in any location, 
should be permitted to appear 
in the appropriately sized
trap zone.  To achieve this, we modify the direction of 
information flow at the eight level-$i$ macrocolors 
which form the boundary 
of the trap zone, so that all information at level 
$i$ is flowing out of the trap zone only.

We relax the
consistency requirements for tiles whose location is north, south 
east or west of one of the tiles in the trap zone.  
Note that if a tile has three of its 
location parts in agreement, the location of that tile is already
determined.  So if a tile knows itself (based on 3 location parts) 
to be a neighbor of the trap zone, then it will require only 
the consistency of the three parts of its color not touching the 
trap tile, and it will allow its trap-adjacent macrocolor to be
unrestricted.  Thus a trap neighbor $M$ will 
just observe whatever the adjacent trapped tile $M'$ chooses 
to hallucinate in all fields, including hallucinated 
``self-information of $M$'',
``corner messages sent by $M$'',
``diagonal neighbor information communicated by $M$''
and so on.
By reversing information flow in this way, 
any locally admissible $2\times 2$ block of 
tiles is permitted to appear in the trap zone.

Now, the trapped tiles will display accurate self-information
because they are internally consistent.\footnote{If 
a trapped tile thinks that it is a trap neighbor, it might display 
a false self-knowledge on one of its sides!  But its other 
side will have its true self-knowledge.  The information displayed 
by the group of trapped tiles is sufficent for the trap neighbors to 
correctly deduce which self-knowledge is correct 
if two versions are offered.}
The trap neighbors must examine the self-knowledge of 
the trapped tiles
to get some information:
\begin{itemize}
\item What sizes of square have appeared in the
trapped tile?  (The trap neighbors 
make sure the true parent has recorded the numbers
from the trapped tiles parameter tapes.)
\item What partial sides and corners appear in the trapped 
tile? (This is also readable from the parameter tape of 
trapped tiles.)
\end{itemize}

The eight trap neighbors and the four diagonal 
trap neighbors use a new 
\emph{trap information part} of their colors to 
pass information in a twelve-tile loop.  The passed information is:
\begin{itemize}
\item the combined self-information displayed by all four
trapped tiles, and
\item any side messages that trap neighbors would have 
wanted to pass through the trap zone
\end{itemize}
Based on this information, the trap neighbors are 
fully equipped to fill in all the missing size-checking functions of the 
trapped tiles.  If they are able to compute the 
size of a square whose corner is in the trapped 
tiles, they make sure the parent records that size.
If they are not able to compute the size of such a
square, they make sure the parent has recorded 
the deep coordinates of the partial side or corner in the trap zone.
If the parent cites a trapped tile as justification for 
a size, the trap neighbors check if this is warranted.
Finally, 
any messages that should have passed directly 
through the trapped tile are routed around it, and any 
messages that the trapped tile should have sent
out are sent out (with correct pixel coordinates, which 
the trapped tile would not have been able to compute by itself, 
since it believes itself to be in a different part of the parent).

The consistency checks for these modifications are all efficient to compute.
To summarize, 
let $Y_3$ be the SFT defined just as $Y_2$ was defined, but using 
the following colors and algorithm instead.

A level-$i$ macrocolor contains:
\begin{enumerate}
\item Location, wire and computation bits
\item Self-knowledge, neighbor-knowledge, diagonal-neighbor-knowledge, 
and parent-knowledge bits
\item Corner message sending and receiving bits
\item (NEW) Trap neighbor information circle bits
\begin{enumerate}
\item Eight self-knowledge fields, one for each side displayed by the trapped tiles.
These are used to pass the self-knowledge of the trapped tiles around to all trapped
neighbors.
\item Eight corner message sending fields, one for each trap neighbor.  These are used to inform all trap neighbors of any message that a trap neighbor wanted to send
through the trap zone (but was unable because information cannot flow into the trap zone).
\end{enumerate}
\end{enumerate}

The algorithm $A(t)$ does the following (changes italicized):
\begin{enumerate}
\item (Parsing)
\item (Consistency)
\begin{enumerate}
\item Check location, wire, and computation parts are consistent,
\emph{
except if 
three locations indicate this tile is a trap neighbor, in which case 
the trap-adjacent
location/wire/computation is permitted to be anything.}
\item Check consistency of self-, neighbor-, diagonal-neighbor-
and parent-knowledge across the four colors, \emph{except if this tile 
is a trap tile neighbor, in which case allow anything in the trap-adjacent color,
and deduce neighbor- and diagonal-neighbor-information 
involving trapped tiles from the trap neighbor information circle.}
\item \emph{If this tile is a trap neighbor, 
\begin{enumerate}
\item check that the self-information
displayed by the adjacent trapped tile also appears in the
appropriate field of the information circle bits
\item check that any message this tile wanted to send through 
the trap zone instead appears in the appropriate field 
of the information circle
\item check that the remaining fields of the information circle 
agree on both sides (that is, the information that other trap 
neighbors are re-routing is being accurately copied around).
\item Use the information circle to 
deduce the accurate self-information of all trapped tiles,
and to learn which messages the other trapped neighbors 
wanted to send through the trap zone.
\end{enumerate}}
\item Perform corner-message-passing functions. \emph{If this 
tile is a trap tile neighbor, do not directly
exchange messages with 
the trapped tile.  Instead, look to the information circle to see if any 
other trap neighbor wanted to send a message to this tile, 
and to see if any trapped tile should have sent a
message to this tile.}
\item Make sure the parent knows of any sizes deducible from
self-, neighbor- and diagonal-neighbor knowledge.
\item Make sure the parent accurately records or doesn't record
any partial corner or side from this tile, as appropriate. \emph{Trap
neighbors also do this on behalf of the trapped tiles.}
\item If the parent cites the location of this macrotile as justification 
for a size, make sure that is true and that the lexicographically 
least macrotile was cited.  \emph{Trap neighbors also check 
this on behalf of the trapped tiles.}
\item If the parent claims a partial corner or side that would intersect 
this macrotile, make sure that is true. \emph{Trap neighbors also 
check this on behalf of the trapped tiles.}
\item Check the parent-knowledge does not contain squares 
of too-different sizes.
\end{enumerate}
\item (Sync Levels)
\item If any of the above steps do not check out, halt.  Otherwise, run forever.
\end{enumerate}

Observe that $X_{0,1}$ is still a factor of $Y_3$.  That is because 
Proposition \ref{prop:up-accurate}, Lemma \ref{lem:adjacent}, 
and  Lemma \ref{lem:down-accurate} go through 
with only the following slight modifications.
\begin{enumerate}
\item We say that a macrotile $\bar t$ from $T$ is self-accurate 
at $u$ in $y$ if the top color of $T$ is self-accurate, and all the 
child macrotiles of $T$ are accurate \emph{except possibly the 
trapped tiles, who are only required to be self-accurate}.
\item In Proposition \ref{prop:up-accurate}, the 
only change is the more convoluted process 
by which the self-information of trapped tiles 
gets to the parent.
\item There is no modification to Lemma \ref{lem:adjacent} 
because the trapped tiles do not occur at the edge of their parent.
\item In Lemma \ref{lem:down-accurate}, 
we build superpositions where the trapped tiles happen to hallucinate 
the truth, so we may continue setting all parts of all macrocolors
to accurate values.  
\end{enumerate}
We leave the easy details to the reader.

\begin{lemma}\label{lem:fsm}
The SFT $Y_3$ has a fully supported measure.
\end{lemma}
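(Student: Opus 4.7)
The plan is to show that for every pattern $v \in \mathcal{L}(Y_3)$, some configuration $x' \in Y_3$ contains $v$ with positive density; then any weak-$\ast$ limit of Cesàro averages $\frac{1}{n^2}\sum_{g\in [0,n)^2} \delta_{T^g x'}$ is a shift-invariant probability measure giving positive mass to the cylinder $[v]$, and a countable convex combination indexed by a basis of cylinders yields a single fully supported invariant measure. So the content of the lemma is reduced to engineering, for each pattern $v$, a configuration of $Y_3$ in which $v$ recurs with positive density.

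To build the backbone, first invoke Proposition \ref{prop:Xfactor} to produce a ``generic'' configuration $y^\ast \in Y_3$ whose $Y_1$-part is, say, the uniform $\downarrow$ configuration (so $\type(h_2(y^\ast))=\infty$), and whose macrotile structure is aligned on the standard $L_i$-grid at every level $i$. Given a target pattern $v \in \mathcal L(Y_3)$, fix a configuration $x$ realizing $v$ and choose the level $j$ so large that some $2 \times 2$ block $v'$ of level-$(j-1)$ macrotiles in $x$ entirely contains $v$. Because $v'$ is the restriction of a valid configuration, it is a ``locally admissible $2\times 2$ block of level-$(j-1)$ macrotiles,'' which is precisely the class of patterns the trap zone design was built to admit.

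Now modify $y^\ast$ by overwriting, in each level-$j$ macrotile, the contents of its trap zone with $v'$, and simultaneously updating the trap-information-circle bits of the eight trap-neighbor and four diagonal-trap-neighbor level-$(j-1)$ macrotiles so that they report the new self-knowledge, side messages, and corner messages of the substituted trapped tiles. Call the result $x'$. Inside the trap zone $x'$ is consistent because $v'$ came from a genuine configuration and all internal $2\times 2$ checks of the trapped tiles pass. Across the trap boundary $x'$ is consistent because the trap design removes all location/wire/computation/knowledge consistency requirements on the trap-adjacent colors, and the updated information-circle bits satisfy the new checks. Outside the trap neighborhood $x'$ is unchanged from $y^\ast$ and hence consistent. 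At every level $i \ne j$, the modification is invisible because the trap zones at other levels are untouched and the parent-knowledge of the level-$j$ macrotile containing the trap zone was always, by design, computed from the self-information reported around the circle rather than from the trapped tiles' hallucinations. Thus $x' \in Y_3$, and since the level-$j$ macrotiles tile the plane periodically in $y^\ast$, the pattern $v$ recurs in $x'$ with density at least $1/L_j^2 > 0$.

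The main obstacle, then, is not finding the recurrent configuration but confirming that the substitution keeps all levels consistent -- in particular that the parent-level accuracy checks (the analogues of Proposition \ref{prop:up-accurate} adapted to the trap zone, as indicated in the paragraphs just before the statement of the lemma) still pass. This is essentially a matter of citing the modifications (1)--(4) of those propositions outlined above: the trapped tiles are required only to be self-accurate, and the trap neighbors use the information circle to deduce the true self-knowledge of the trapped tiles and to perform the parent-facing checks on their behalf. Since $v'$ comes from a genuine $Y_3$-configuration, the self-information it advertises truly is accurate for the $Y_1$-background we have placed it on (we may, if necessary, choose the $Y_1$-background of $x'$ to agree with that of $x$ in the regions we substituted, so that accuracy is automatic). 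With that verification in hand, the measure-theoretic conclusion is standard and the lemma follows.
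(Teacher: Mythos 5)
Your overall reduction (positive-density recurrence for every pattern, then Ces\`aro averages and a convex combination over cylinders) matches the paper's implicit strategy, and the idea of planting the pattern in trap zones is the right one. But the surgery step has a genuine gap: you treat the substitution as a \emph{local} modification of a fixed background $y^\ast$, and it is not. The trapped tiles' self-knowledge (sizes of squares, partial corners and sides in their responsibility zones) is read off by the trap neighbors, recorded on the parent's parameter tape, and propagated up the entire macrotile hierarchy, where the algorithm checks it against everything else each ancestor is responsible for. If $v$ carries data about finite $n$-squares and your background has $Y_1$-part equal to the uniform $\downarrow$ configuration (type $\infty$), then some sufficiently large ancestor is responsible both for an $n$-square reported out of a trap zone and for the arbitrarily long partial sides of the degenerate infinite squares in the background, and its children halt at the size-compatibility check. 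Your parenthetical fix --- ``choose the $Y_1$-background of $x'$ to agree with that of $x$ in the regions we substituted'' --- does not help: patching the $Y_1$-layer only inside the trap zones generally does not even yield a valid element of $Y_1$ (arrows must close up into complete nested squares), let alone one satisfying the size restrictions globally. For the same reason the claim that the modification is ``invisible'' at levels $i \neq j$ is false: the level-$j$ parent's tape contents change, hence its colors change, hence so do those of all its ancestors.

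This is exactly the work the paper's proof does and yours omits. There, one first extracts from $w$ a single finite type $(n_0,n_0+1)$ consistent with $h(w)$ (completing any partial squares when the ambient type is $\infty$ --- note also that a pattern containing a corner of an infinite square can occur at most four times in a type-$\infty$ configuration, so it cannot recur with positive density there), then builds a fresh $y' \in Y_1$ of that type: a grid of $n_0$-squares leaving $M\times M$ blank cells, a copy of the restriction of $y$ to the combined responsibility zones of the trapped tiles placed in a fairly central trap zone of each cell, and Lemma \ref{lem:extension} to fill the rest with $n_0$- and $(n_0+1)$-squares. Only then is the tiling layer rebuilt from scratch by the induction of Lemma \ref{lem:down-accurate} (with the three-case analysis for trapped tiles, trap neighbors, and ordinary tiles) so that every macrotile at every level is accurate for $y'$. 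Your proof needs this global reconstruction of the $Y_1$-layer, not a local overwrite of an incompatible background.
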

\begin{proof}
Suppose that $w$ is a pattern that appears in some $z = (x,y) \in Y_3$.  Since 
every pattern is eventually contained in a $2\times 2$ block of macrotiles 
at some level, without loss of generality we can assume that $w$ is a 
$2\times 2$ block of macrotiles.  Let $h:Y_3\rightarrow Y_1$ be the 
obvious factor map.
We choose a finite number $n_0$ 
as follows.  If $\type(h(x)) = n$ or $\type(h(x)) = (n,n+1)$, let $n_0=n$.  
If $\type(h(x)) = \infty$, then $h(w)$ is either part of an interior of a square, 
or contains 
some combination of partial corners and/or partial sides.  Though these 
partial squares are infinite in $x$, it is consistent with $h(w)$ that they 
be completed into large finite squares.  In this case, 
let $n_0$ be large enough that 
$(n_0,n_0+1) \in \type(h(w))$, and also large enough that no square of 
size $n_0$ could have more than a partial corner or side appear in 
any region equal in size to the combined responsibility zones 
of the tiles of $w$.

Now, fix a non-exceptional tiling structure.  The pattern $w$ is the right shape to fit in 
a trap zone at some level.  Trap zones the size of $w$ appear with positive 
density.  Let $N$ be the number guaranteed by Lemma \ref{lem:extension} 
for $n_0$.  Let $M$ be a number large enough that every $M\times M$ 
square contains a $w$-sized trap zone that is fairly central: the edge of 
the combined responsibility zones of the trapped tiles 
is not closer than $N$ pixels away from the edge of the $M\times M$ square.
Build $y \in Y_1$ as follows.  First lay $n_0$-squares in horizontal stripes, 
so that the blank space between the stripes is $M$ pixels tall.  Then lay 
$n_0$ squares in vertical strips so that the blank space between the strips 
is $M$ pixels wide.  (It maybe necessary to play with the spacing of 
$n_0$-squares, adjacent vs. a one-pixel gap, in order to achieve this,
but $M$ is bigger than $N$, so it is possible.) 
Then put $h(w')$ into one fairly central trap zone in each $M\times M$ 
blank region, where $w'$ is the restriction of $y$ to the combined 
responsibility zones of the tiles of $w$.  
Then use Lemma \ref{lem:extension} to fill in $n_0$-squares 
and $(n_0+1)$-squares through the entire remaining space in each
region.  The result 
is an element $y' \in Y_1$.

Now we argue that there is an $x' \in T$ such that $z :=(x',y') \in Y_3$
 contains $w$ with positive density.  Let $j_0$ denote the level 
 of the four macrotiles of $w$.  Let $u \subseteq \mathbb Z^2$ be 
 an $L_i \times L_i$ region, where $i>j_0$, and where $u$ corresponds 
 to a single macrotile in the fixed tiling structure.  We claim that for every 
 macrocolor $\bar c$ that is self-accurate at $u$ for $y'$, there 
 is a valid $T$-pattern $\bar t$ which is self-accurate at $u$ for $y'$, 
 has top-level color $\bar c$,
 and furthermore $\bar t$ has $w$ in every fairly 
 central trap zone.
 
 The base case occurs when $i = j_0+1$.  If $u$ does not contain 
 a fairly central trap zone, we apply Lemma \ref{lem:down-accurate} 
 and are done.  So assume $u$ does contain a fairly central 
 trap zone.  We are trying to construct
 a level-$i$ macrotile and we will do so by selecting appropriate 
 level-$j_0$ child macrotiles $M$ and arguing that they fit together.  There are three
 cases.
 \begin{enumerate}
 \item If $M$ is neither trapped nor a trap neighbor, we choose a 
 $T$-pattern $\bar t_M$ for $M$ exactly as in Lemma \ref{lem:down-accurate},
 choosing accurate values for all fields, and referring to $\bar c$ for the unique 
 choices of wire and computation bits.
 \item If $M$ is trapped, let $\bar t_M$ be the corresponding tile of $w$.  By 
 Proposition \ref{prop:up-accurate}, $\bar t_M$ is self-accurate at its location 
 in $y$.  Since self-accuracy depends only on the responsibility zone, and 
 $y'$ copied $y$ on the entire responsibility zone of the trapped tiles, 
 $\bar t_M$ is self-accurate at its location in $y'$.
 \item If $M$ is a trap neighbor, set its colors to match the corresponding 
 trapped tile of $w$ on its trap size.  Set its standard colors on the other 
 three sides to accurate values.  Set its information circle colors to 
 accurately reflect the self-information that all trapped tiles are displaying,
 and to accurately reflect the messages that should be sent through the 
 trap zone on all sides.  Since these colors are accurate, apply 
 Lemma \ref{lem:down-accurate} to produce the macrotile $\bar t_M$.
 \end{enumerate}
 The colors above are chosen so that all tiles are self-accurate 
 and all adjacent tiles have top-level
 colors that 
 can go next to each other.  Therefore, by Lemma \ref{lem:adjacent}, all the
child macrotiles created above can fit together to form a level-$i$ 
macrotile $\bar t$ with the right properties.
 
 The inductive case is now identical with the proof of Lemma \ref{lem:down-accurate}.
It now follows by compactness that there is an infinite $T$-tiling $x'$ 
that is accurate at each pixel in $y'$ and copies $w$ in each fairly central 
trap zone.  Since the fairly central trap zones occur with positive density,
we are done.
\end{proof}

\begin{lemma}\label{lem:gtr3}
The SFT $Y_3$ has generalized transitivity rank 3.
\end{lemma}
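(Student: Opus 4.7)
The plan is to mirror the type-based entropy-pair analysis of $X_{0,1}$ from Lemma \ref{lem:ep-via-type}, replacing entropy-or-equal pairs with transitivity pairs on $Y_3$. For $z \in Y_3$, I would use the information type of $z$ discussed at the end of Proposition \ref{prop:Xfactor}, which coincides with $\type(h_2(z)) \in \omega \cup \{(n,n+1) : n \in \omega\} \cup \{\infty\}$. The central claim will be that $(z_1, z_2) \in Y_3^2$ is a transitivity pair exactly when $\type(z_1)$ and $\type(z_2)$ satisfy the same compatibility table as in Lemma \ref{lem:ep-via-type}.

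For the ``only if'' direction, if the types are incompatible, one can exhibit large patterns $v_1, v_2$ from $z_1, z_2$ whose displayed squares or long partial sides witness the incompatibility. By Proposition \ref{prop:up-accurate}, any $Y_3$-configuration containing both patterns would eventually have, at some sufficiently high macrotile level, a parent-knowledge containing two incompatible sizes (or a partial side too long compared to a coexisting finite size). This triggers the last consistency check of the algorithm, so no such common configuration exists.

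For the ``if'' direction, I would adapt the trap-zone construction from the proof of Lemma \ref{lem:fsm}. Given patterns $v_1, v_2$ from compatible-type configurations, pick a common finite type $t$ consistent with both (as in the proof of Lemma \ref{lem:ep-via-type}). Build a $Y_1$-configuration $y'$ of type $t$ whose macrotile grid contains two sufficiently central trap zones large enough to hold $v_1$ and $v_2$ in separate locations, into which we copy their $Y_1$-parts. The trap-zone mechanism then permits installing the macrotile overlays of $v_1$ and $v_2$ inside these trap zones while choosing accurate overlays everywhere else; a compactness argument as in Lemma \ref{lem:fsm} produces a configuration $z \in Y_3$ in which both $v_1$ and $v_2$ appear.

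With $T_1$ characterized, the rank analysis proceeds as for $X_{0,1}$. The relation $T_1$ is automatically closed (as noted before Definition \ref{def:gtr}), but it is not transitive: the pair of pure types $0$ and $2$ lies outside $T_1$, yet sits in the chain $0 \sim 1 \sim 2$. Hence $T_2$ is the transitive, symmetric closure of $T_1$, with exactly two classes: the finite-type elements (all connected through the integer ladder of pure and chimera types) and the $\infty$-type elements. Because pure-type-$n$ configurations converge to $\infty$-type ones as $n \to \infty$, $T_2$ is not closed, and $T_3 = \overline{T_2} = Y_3^2$. Since $T_1, T_2 \neq Y_3^2$, the generalized transitivity rank is exactly $3$. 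The main obstacle will be the ``if'' direction: one must realize arbitrary pattern pairs from two compatible-type configurations simultaneously inside a single $Y_3$-configuration despite their potentially exotic macrotile overlays, which is precisely why the trap-zone mechanism was built into $Y_3$ in the first place.
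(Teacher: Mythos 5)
Your proposal is correct and follows essentially the same route as the paper: characterize the transitivity pairs of $Y_3$ by the type table of Lemma \ref{lem:ep-via-type}, realize coexistence of compatible patterns via the trap-zone construction of Lemma \ref{lem:fsm}, and then run the three-step closure computation ($T_1$ closed but not transitive, $T_2$ two classes and not closed, $T_3 = Y_3^2$). The only cosmetic differences are that the paper dispatches the negative direction by noting the images $h(x),h(x')$ already fail to be a transitivity pair in $X$ (rather than re-invoking Proposition \ref{prop:up-accurate}), and it places the two patterns as free choices in a positive density of trap zones rather than in just two; both variants are fine.
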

\begin{proof}
We claim that $x,x'$ in $Y_3$ are a transitivity pair exactly when 
$h(x),h(x')$ in $X$ are an entropy-or-equal pair.  This is determined 
by type (see Lemma \ref{lem:ep-via-type}).  When $h(x),h(x')$ are 
not an entropy-or-equal pair, observe it always happens for a particularly 
strong reason: $h(x)$ and $h(x')$ were not even a transitivity pair.  
Therefore, $x,x'$ cannot be a transitivity pair either.  On the other 
hand, if $h(x)$ and $h(x')$ have compatible type, then for any 
patterns $w\in x$ and $w' \in x'$, there is an $n_0$ such that 
$(n_0,n_0+1) \in \type(h_1(w)) \cap \type(h_1(w'))$.  As in the previous 
lemma, it suffices to consider $w$ and $w'$ which are the same
size and which each consist of four 
macrotiles in a $2\times 2$ arrangement.  As in the previous lemma, 
construct $y \in Y_1$ by fixing a tiling structure, laying $n_0$-squares 
in an $M\times M$ grid pattern for large enough $M$, 
placing $h_1(w)$ or $h_1(w')$ (free choice) in fairly 
central trap zones in each $M\times M$ region, and then filling in 
the rest of $y$ and the computations just as in the previous lemma.
Apply compactness and the result is an element of $Y_3$ in which 
both $w$ and $w'$ appear.
\end{proof}

\begin{theorem}
There is a $\mathbb Z^2$-SFT with TCPE rank 3.
\end{theorem}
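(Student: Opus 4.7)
The plan is to combine the ingredients assembled in the previous subsections in an essentially routine way. The SFT $Y_3$ constructed in Section~\ref{sec:drs-trap} has been shown (Lemma~\ref{lem:fsm}) to admit a fully supported shift-invariant measure, and (Lemma~\ref{lem:gtr3}) to have generalized transitivity rank exactly $3$. This places $Y_3$ squarely in the setting where the modified Pavlov construction applies.

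First I would apply the map $f$ of Theorem~\ref{thm:pavlov-modified} to $Y_3$. By part (2) of that theorem, $f(Y_3)$ is again a $\mathbb Z^2$-SFT, and by part (3) it inherits a fully supported measure. Then I would invoke the theorem relating generalized transitivity rank to TCPE rank: because $Y_3$ has a fully supported measure and is generalized transitive of rank $3$, $f(Y_3)$ has TCPE and its TCPE rank equals the generalized transitivity rank of $Y_3$, which is $3$.

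The only thing one has to verify separately is that the rank does not collapse at the first step. Concretely, one should check that $T_1 \subsetneq Y_3^2$ (so $f(Y_3)$ is not already transitive in the entropy-pair sense), that the transitive closure of $T_1$ is not yet all of $Y_3^2$, and that the topological closure of that transitive closure is all of $Y_3^2$. This is precisely the analysis carried out for $X_{0,1}$ in Lemma~\ref{lem:ep-via-type}, lifted through the factor map $h: Y_3 \to X_{0,1}$ via Lemma~\ref{lem:gtr3}: the transitivity pair structure on $Y_3$ mirrors the entropy-or-equal pair structure on $X_{0,1}$, whose hierarchy was computed by hand to have length exactly $3$ (finite-type points are merged into one class at stage $2$, and the infinite-type points are glued on by taking a topological closure at stage $3$).

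I do not anticipate any serious obstacle at this stage; all the real work was the construction of $Y_3$ and the verification of its two key properties, and the Pavlov transfer theorem is designed to convert generalized transitivity rank into TCPE rank without loss. The only mild subtlety is making sure we apply the strengthened version (Theorem~\ref{thm:pavlov-modified}) rather than the original Pavlov statement, since it is the stronger ``if and only if'' form connecting transitivity pairs to entropy-or-equal pairs that allows us to identify ranks exactly rather than merely obtain TCPE.
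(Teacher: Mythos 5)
Your proposal is correct and follows exactly the paper's own route: the example is $f(Y_3)$, and the proof is an immediate combination of Theorem~\ref{thm:pavlov-modified}, Lemma~\ref{lem:fsm}, Lemma~\ref{lem:gtr3}, and the theorem identifying the TCPE rank of $f(W)$ with the generalized transitivity rank of $W$. The extra verification you flag (that the rank is exactly $3$ and does not collapse) is already contained in Lemma~\ref{lem:gtr3} together with the type analysis of Lemma~\ref{lem:ep-via-type}, so nothing further is needed.
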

\begin{proof}
Our example is $f(Y_3)$, where $f$ is the map from Theorem \ref{thm:pavlov-modified}.
Apply that theorem, Lemma \ref{lem:fsm} and Lemma \ref{lem:gtr3}.
\end{proof}

\section{A family of $\mathbb Z^2$ shifts}\label{sec:rankalpha}

To generalize the previous construction to all computable ordinals, 
we define a transformation whose input is a tree $T \subseteq \omega^{<\omega}$
and whose output is a subshift $X_T \subseteq 2^{\mathbb Z^2}$, 
whose TCPE status and rank are controlled by properties of $T$.
It will be technically more convenient to use trees 
$T \subseteq \Omega^{<\omega},$ where $\Omega = \{(n,n+1) : n \in \omega\}$.

\subsection{Definition of the family of shifts}

\begin{definition}
Given square patterns $A,B \in 2^{[0,m)^2}$, and 
subpatterns $C \preceq A, D \preceq B$,
let $R_{A,B,C,D}$ denote the set of 
restrictions which say: in every configuration in which both a 
 $C$ and a $D$ 
appear, both an $A$ and a $B$ must appear.  
Furthermore every occurrence of $A$ or $B$ must have another 
occurrence of $A$ or $B$ directly north, south, east and west.
\end{definition}

Informally, when restrictions $R_{A,B,C,D}$ are applied, the permitted configurations 
fall into two categories.  Configurations in which $C$ and $D$ do not coexist are 
unrestricted.  But in configurations where $C$ and $D$ do coexist, the 
 configuration can be understood as an (affine) configuration on $\{A,B\}^{\mathbb Z^2}$.

\begin{definition}
For each $\sigma \in \Omega^{<\omega}$, define $A_\sigma,B_\sigma$ to be
the following patterns in $2^{[0,m)^2}$ (where $m$ depends on $\sigma$).
Let $A_\lambda = 0$ and $B_\lambda = 1$.  If $A_\sigma$ and $B_\sigma$ 
have already been defined, let $B_{\sigma\concat(n,n+1)}$ be an $(n+1)$-square 
on alphabet $\{A_\sigma,B_\sigma\}$, and let $A_{\sigma\concat(n,n+1)}$ 
be an $n$-square on the same alphabet, plus a row of $B_\sigma$ along the top 
and along the right side (to make it the same size as $B_{\sigma\concat(n,n+1)}$.
$$A_{\sigma\concat(n,n+1)} = \begin{array}{cccccc} B_\sigma & \dots & \dots & \dots &\dots & B_\sigma\\
A_\sigma & \dots & \dots & \dots & A_\sigma & \vdots\\
\vdots & B_\sigma & \dots & B_\sigma & \vdots & \vdots \\
\vdots & \vdots & \ddots & \vdots & \vdots & \vdots \\
\vdots & B_\sigma & \dots & B_\sigma & \vdots & \vdots \\
A_\sigma & \dots & \dots & \dots & A_\sigma & B_\sigma\end{array},
B_{\sigma\concat(n,n+1)} = \begin{array}{cccccc} A_\sigma & \dots & \dots & \dots & \dots & A_\sigma\\
\vdots & B_\sigma & \dots & \dots & B_\sigma & \vdots\\
\vdots & \vdots & \ddots & & \vdots & \vdots\\
\vdots & \vdots &  & \ddots & \vdots & \vdots\\
\vdots & B_\sigma & \dots & \dots & B_\sigma & \vdots\\
A_\sigma & \dots & \dots & \dots & \dots & A_\sigma\end{array}
$$
\end{definition}

\begin{definition}  Definition of $R_\sigma$ and $F_\sigma$.
\begin{enumerate}
\item No restrictions are imposed by $R_\lambda$.  For $\sigma \in \Omega^{<\omega}$ of 
length at least 1, write $\sigma = \rho\concat(n,n+1)$, let $R_{\sigma}$ denote 
$R_{A_\rho,B_\rho, C, B_\rho}$, where $C$ is the $n$-square on alphabet $\{A_\rho, B_\rho\}$. 
\item Let $F_\sigma$ denote $F_{A_\sigma,B_\sigma}$, with the $F$ restrictions 
as in Definition \ref{def:FAB}.
\end{enumerate}
\end{definition}

\begin{definition}
For any tree $T \subseteq \Omega^{<\omega}$, define $X_T$ to be the subshift 
defined by forbidding $R_\sigma$ and $F_\sigma$
for each $\sigma \in T$.
\end{definition}

Observe that if $T = \{\lambda\}$, then $R_\lambda$ makes no 
restrictions and $F_\lambda = F_{0,1}$ ensures that $X_T$ is equal to the subshift $X$ 
considered in the previous section.

The intuition behind the definition can be understood by considering the 
simple case $T = \{\lambda, (1,2)\}$.  We will speak about this case informally 
in order to motivate the subsequent arguments. 
The entropy pair relations for 
$X_T$ are pictured in Figure \ref{fig:simpleT}\footnote{
The figure is accurate other than a technical caveat
about alignment; technically there are 16 parallel widgets 
between type 1 and type 2.}.  
\begin{figure}[hbpt]\label{fig:simpleT}
\includegraphics[width=\textwidth]{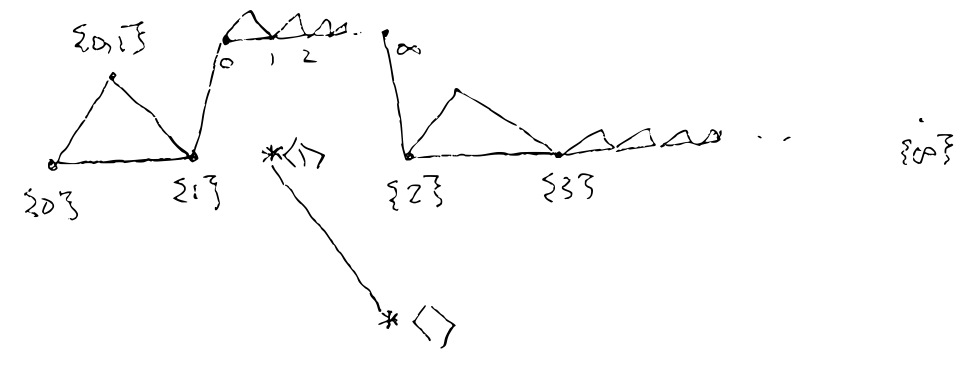}
\end{figure}
Because $\lambda \in T$, 
$X_T$ is a subshift of $X$.  The inclusion of $R_{(1,2)}$ removes many 
elements of $X$ which had type $(1,2)$, and the remaining elements 
of type $(1,2)$, the pattern of 1-squares 
and 2-squares is very regular, these elements can be understood 
as configurations on the alphabet 
$A_{(1,2)}, B_{(1,2)}$.   Therefore it makes sense to apply $F_{(1,2)}$ 
to these configurations.  This causes a fracturing of the $(1,2)$ type into 
subtypes: 
$$(1,2)\concat0, (1,2)\concat(0,1), (1,2)\concat1, 
\dots, (1,2)\concat\infty,$$ similar to how $X$ was fractured.  
In $X$, an element of type 1 was an entropy pair with an element of 
type 2, but in $X_T$, no element of type 1 is an entropy pair with 
any element of type 2, for the same reason that $0^{\mathbb Z^2}$ (type 0)
and $1^{\mathbb Z^2}$ (type $\infty$) were not entropy pairs in $X$ 
(the connecting configurations would need to contain both 
0-squares on alphabet $\{A_{(1,2)}, B_{(1,2)}\}$ and $n$-squares on 
the same alphabet, where $n$ is large).

Note, however, that among configurations with subtype $(1,2)\concat 0$,
there is one which is 
is just an infinite array of $A_{(1,2)}$, i.e. an orderly array of 1-squares on the 
original alphabet.  Therefore this configuration is also 
in some sense type 1.  Similarly, there is a configuration with subtype $(1,2)\concat\infty$ 
which is just an infinite array of $B_{(1,2)}$, i.e. consists of tightly packed 
2-squares on the original alphabet.  Therefore this configuration 
is in some sense type 2.  These connections provide the reason why $X_T$ still 
has TCPE (in fact, TCPE rank 4).    The first step $E_1$ is as in 
Figure \ref{fig:simpleT}.  The equivalence relation $E_2$ has three classes, 
which could be named ``types less than $(1,2)\concat \infty$'',  
``finite types $\geq (1,2)\concat\infty$'' and ``$\infty$''.
The closed relation $E_3$ provides connections from the first equivalence 
class to the special $(1,2)\concat\infty$ element, and from the second 
equivalence class to the $\infty$ element.  And $E_4$ is everything.

\subsection{Types and the Hausdorff derivative}

We assign each $x \in X_T$ a \emph{type} and an \emph{alignment}.
The type of $x$ is a string in $(\omega \cup \Omega \cup \{\infty\})^{\leq \omega}$
which depends on $x$ and $T$.  The \emph{alignment} is an element 
of $(\mathbb N^2)^{\leq \omega}$.  

Given $\sigma \in \Omega^{<\omega}$, let $j_\sigma\in \Nat$ be 
the unique number such 
that $A_\sigma, B_\sigma \in 2^{[0,j_\sigma)^2}$

\begin{definition}
Given $\sigma \in \Omega^{<\omega}$, we say that $x \in 2^{\mathbb Z^2}$ 
is \emph{$\sigma$-regular} if there is some $g \in [0,j_\sigma)^2$ such 
that for all $h \in (j_\sigma\mathbb Z)^2$, $x\uhr ([0,j_\sigma)^2+h+g)$ is equal to 
either $A_\sigma$ or $B_\sigma$.  If such $g$ exists it is unique and 
we say $g$ is the \emph{$\sigma$-alignment} of $x$.
\end{definition}

That is, $x$ is $\sigma$-regular if $x$ can be parsed as a tiling on 
alphabet $\{A_\sigma,B_\sigma\}$.  Observe that if $x$ is $\sigma$-regular,
then $x$ is $\tau$-regular for every $\tau \preceq \sigma$, and $x$ 
is not $\tau$-regular for any $\tau$ that is incomparable with $\sigma$.

\begin{definition}\label{defn:types}
For a tree $T \subseteq \Omega^{<\omega}$ and $x \in X_T$, define 
$\type_T(x)$ as follows.  Let $\rho \in T \cup [T]$ 
be longest such that $x$ is $\rho$-regular.  
\begin{itemize}
\item If $\rho \in [T]$, define 
$\type_T(x) = \rho$.  
\item If $\rho \in T$, consider $x$ as a configuration 
on alphabet $\{A_\rho,B_\rho\}$ and let $t \in \omega \cup \Omega \cup \{\infty\}$ 
be its unique type in the sense of Definition \ref{defn:type}.  Then define 
$\type_T(x) = \rho\concat t$.
\end{itemize}
\end{definition}

If $\sigma \in T$ and $x \in X_T$ is $\sigma$-regular, then if $x'$ 
denotes the shift of $x$ by any amount not in $(j_\sigma\mathbb Z)^2$,
 in general 
$x$ and $x'$ will not be an entropy pair, because their alignment 
is off.  In practice this does not cause any difficulties because 
the alignments get progressively washed out at limit stages of 
the TCPE ranking process. So with the caveat that alignments are 
much less important than the types, but still necessary for 
the reader who wants all details, we define them as follows.

\begin{definition}
For a tree $T \subseteq \Omega^{<\omega}$ and $x \in X_T$, define 
$\alignment_T(x)$ as follows.  Let $\rho \in T \cup [T]$ 
be longest such that $x$ is $\rho$-regular.  Define $\alignment_T(x)$ 
to be the unique string $\nu \in (\Nat^2)^{\leq \omega}$ 
such that $|\nu| = |\rho|$ and for all $k < |\nu|$, 
$\nu(k)$ is the $(\rho\uhr k)$-alignment of $x$. 
\end{definition}

Every type is an element of $(\omega \cup \Omega \cup \infty)^{\leq \omega}$.  
Which of these elements actually appear as types?  

\begin{definition}
Given $T \in \Omega^{<\omega}$, define 
$$T^+ = \{\sigma\concat a : \sigma\concat a \not\in T, \sigma \in T, a \in \omega \cup \Omega \cup \{\infty\}\}.$$
\end{definition}

\begin{prop}
For $T \in \Omega^{<\omega}$, we have $\{\type_T(x) : x \in X_T\} = T^+ \cup [T]$.
\end{prop}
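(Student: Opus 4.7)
The plan is to prove both inclusions, treating separately the case where the witnessing $\rho$ lies in $[T]$ and where it lies in $T$ itself.

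For the inclusion $\{\type_T(x) : x \in X_T\} \subseteq T^+ \cup [T]$, I would take any $x \in X_T$ and consider the longest $\rho \in T \cup [T]$ for which $x$ is $\rho$-regular (extending $\rho$-regularity to $\rho \in [T]$ to mean $(\rho\uhr k)$-regular for all $k$). If $\rho \in [T]$ then $\type_T(x) = \rho \in [T]$, and we are done. Otherwise $\rho \in T$ and $\type_T(x) = \rho \concat t$, where $t \in \omega \cup \Omega \cup \{\infty\}$ is the type of $x$ parsed on alphabet $\{A_\rho, B_\rho\}$. Since $T \subseteq \Omega^{<\omega}$, the only way $\rho\concat t$ could land in $T$ is $t = (n, n+1)$ for some $n$. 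In that case both an $n$-square on $\{A_\rho, B_\rho\}$ and the pattern $B_\rho$ appear in $x$, so the restriction $R_{\rho\concat t}$ fires and forces $x$ to be $(\rho\concat t)$-regular, contradicting maximality of $\rho$. Hence $\rho\concat t \notin T$, i.e. $\rho\concat t \in T^+$.

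For the reverse inclusion I would exhibit, for each element of $T^+ \cup [T]$, an $x \in X_T$ realizing it. For $\rho \in [T]$, a nested construction works: the pattern $B_{\rho\uhr(k+1)}$, parsed on $\{A_{\rho\uhr k}, B_{\rho\uhr k}\}$, contains $B_{\rho\uhr k}$ as an interior subpattern, so the sequence $B_{\rho\uhr 1}, B_{\rho\uhr 2}, \dots$ can be centered consistently and passed to a limit $x$ which is $(\rho\uhr k)$-regular for every $k$. Every $F_\tau$ and $R_\tau$ with $\tau \in T$ is then satisfied: those with $\tau \prec \rho$ hold because $x$ is $\tau$-regular in a consistent nested way, and those for $\tau$ incomparable to $\rho$ hold vacuously because neither $A_\tau$ nor $B_\tau$ nor any trigger pattern on $\{A_\tau, B_\tau\}$ occurs in $x$. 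For $\sigma\concat t \in T^+$ with $\sigma \in T$, I would pick a configuration $\bar{x}$ on alphabet $\{A_\sigma, B_\sigma\}$ of type $t$ in the sense of Definition \ref{defn:type} (a pure-type configuration if $t \in \omega$, a chimera-type configuration if $t \in \Omega$, and the constant $B_\sigma$ configuration if $t = \infty$), and then interpret it as $x \in 2^{\mathbb Z^2}$ by substituting each symbol with its underlying $\{0,1\}$ block; the required verification splits into restrictions for $\tau \preceq \sigma$ (handled by $\sigma$-regularity) and restrictions for $\tau$ not comparable with or properly extending $\sigma$ (vacuous, since the relevant trigger patterns do not occur in $x$).

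The delicate step is showing that the configuration $x$ built in this last subcase does not accidentally acquire more regularity than advertised, i.e., that $\sigma$ really is the longest element of $T \cup [T]$ for which $x$ is regular. The hard part will be choosing $\bar{x}$ carefully enough (using the irregular-spacing constructions from Section \ref{sec:rank3} for the pure and chimera subcases) so that no tiling by $A_{\sigma\concat(n',n'+1)}, B_{\sigma\concat(n',n'+1)}$ is inadvertently induced; this is where the detailed structure of the $A_\sigma, B_\sigma$ patterns and the role of the alignment invariant become important, and is exactly the kind of bookkeeping that the $\alignment$ function from the preceding subsection was introduced to track.
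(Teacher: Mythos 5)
Your proposal is correct and follows essentially the same route as the paper: the forward inclusion is exactly the paper's contradiction argument (if $\type_T(x)=\rho\concat(n,n+1)$ landed in $T$, the restriction $R_{\rho\concat(n,n+1)}$ would force $(\rho\concat(n,n+1))$-regularity, contradicting the maximality of $\rho$), and the reverse inclusion is the same realization-by-construction that the paper dismisses as ``easy.'' Your closing remarks about avoiding accidental extra regularity (by using irregular spacings) and about nesting the $B_{\rho\uhr k}$ for infinite types are exactly the details the paper leaves implicit, and they are handled correctly.
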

\begin{proof}
If $\type_T(x)$ is infinite, then $\type_T(x) \in [T]$.  It is easy to construct $x$ of 
given infinite type.  If $\type_T(x) = \rho\concat t$ as in Definition \ref{defn:types}, 
suppose for contradiction that 
$\rho\concat t \in T$.  Then $t = (n,n+1)$ for some $n$, so $x$ contains 
both an $n$-square and an $(n,n+1)$-square on alphabet $\{A_\rho,B_\rho\}$.  Since $\rho\concat t \in T$, 
the restrictions $R_{\rho\concat t}$ are present and apply to $x$.  Therefore $x$ is $\rho\concat t$-aligned,
contradicting the initial choice of $\rho$.  On the other hand it is easy to construct an $x \in X_T$ with 
$\type_T(x)  = \rho\concat t \in T^+$, by constructing any configuration of type $t$ on alphabet $\{A_\rho,B_\rho\}$.
\end{proof}

We 
put the following natural linear order on $\omega \cup \Omega \cup \infty$:
$$0 < (0,1) < 1 < (1,2) < 2 < \dots < \infty.$$

This induces a lexicographic order on types.

\begin{definition}
Let $L_T$ be the linear order $(\{\type_T(x) : x \in X_T\}, \leq_{\text{lex}})$.
\end{definition}

Finally, it will be convenient to consider only those $T$ 
where the branches of $T$ are spread out a little bit.
Define $2\Omega = \{(2n, 2n+1) : n \in \omega\}$.  If $T \in (2\Omega)^{<\omega}$, 
we still have 
$\{\type_T(x) : x \in X_T\} \subseteq (\omega \cup \Omega \cup \{\infty\})^{\leq \omega}$
because the ``odd'' elements of $\Omega$ can still appear at the end of $\type_T(x)$.

%For $\sigma \in \Omega^{<\omega}$, if $\sigma = \langle (n_0,n_0+1),(n_1,n_1+1), \dots, (n_k,n_k+1)\rangle$, 
%define 
%$$2\sigma = \langle(2n_0,2n_0+1),(2n_1,2n_1+1),\dots,(2n_k,2n_k+1)\rangle.$$  
%Then given $T \subseteq \Omega^{<\omega}$, define $2T = \{2\sigma : \sigma \in T\}$.

Recall the \emph{Hausdorff derivative} is the following operation on linear 
orders.  Given a linear order $L$, define an equivalence relation 
$H$ on $L$  by $aHb$ if there are only finitely many $c$ with 
$a \leq c \leq b$ or $b \leq c \leq a$.  The equivalence classes 
are well-behaved with respect to the order.  Output the linear order $L'$
whose elements are the $H$-equivalence classes.

The $\alpha$-th Hausdorff derivative is then defined by transfinite 
iteration of the Hausdorff derivative.  (At limit stages, take the 
union of all equivalence relations defined so far.)

Recall that a linear order is \emph{scattered} if $\mathbb Q$ 
cannot be order-embedded into it.  It is well-known that 
repeated applications of the Hausdorff derivative 
stabilize to the equivalence relation $L^2$ if and only if 
$L$ is scattered.  The \emph{Hausdorff rank} of a scattered
linear order is the least $\alpha$ at which this occurs.

\subsection{TCPE ranks of these shifts}

For $T \in (2\Omega)^{<\omega}$, 
we show that $X_{T}$ has TCPE if and only if 
$\{\type_{T}(x) : x \in X_{T}\}$ is scattered when considered 
as a linear order with the lexicographical ordering.  Furthermore, 
there is a precise level-by-level correspondence 
between the TCPE rank of $X_{T}$ and the Hausdorff rank 
of $L_T$.

\begin{theorem}\label{thm:tcpe-hausdorff}  Given $T \subseteq (2\Omega)^{<\omega}$, the shift
$X_{T}$ has TCPE if and only if $L_T$ is scattered.  Furthermore, 
if $X_{T}$ has TCPE rank $\alpha$ and $L_T$ has Hausdorff rank $\beta$, then
$\alpha \in \{2\beta -1, 2\beta\}$.
\end{theorem}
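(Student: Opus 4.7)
The plan is to establish an explicit correspondence between the TCPE hierarchy $(E_\alpha)$ on $X_T$ and the Hausdorff derivative hierarchy on the linear order $L_T$. Specifically, I will argue that each pair of consecutive TCPE steps (one transitive closure followed by one topological closure) implements one Hausdorff derivative on the set of types, so that Hausdorff rank $\beta$ of $L_T$ produces a TCPE rank of either $2\beta-1$ or $2\beta$, with the choice determined by whether the last topological closure alone already yields an equivalence relation.

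First I would prove an entropy-pair characterization generalizing Lemma \ref{lem:ep-via-type}: configurations $x,y \in X_T$ are an entropy-or-equal pair if and only if their types share a common prefix $\rho \in T$ of maximal possible length, their $\rho$-alignments agree, and the symbols immediately following $\rho$ in $\type_T(x)$ and $\type_T(y)$ are either equal or adjacent in the order $0 < (0,1) < 1 < (1,2) < \cdots < \infty$. The forward direction uses a version of Lemma \ref{lem:extension} at scale $j_\rho$ (relativized to the alphabet $\{A_\rho,B_\rho\}$), which lets one freely interleave adjacent-type patterns on a positive-density grid, producing the required independence. The reverse direction uses $R_\sigma$ and $F_\sigma$ for $\sigma \preceq \rho\concat(n,n+1)$: any two patterns of non-adjacent types force forbidden configurations when superimposed.

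Next I would proceed by induction on the Hausdorff derivative. The first transitive closure $E_2$ identifies configurations whose types lie in a common maximal finite interval of $L_T$ (modulo alignment), which is exactly the first Hausdorff equivalence restricted to types in $T^+$. The topological closure $E_3$ then performs two roles simultaneously: (i) it attaches each infinite type $\rho \in [T]$ to the Hausdorff classes that approach it as a limit in $L_T$, by realizing $\rho$-typed configurations as limits of $\rho\uhr n\concat t$-typed ones, and (ii) it washes out alignment distinctions by providing limiting sequences whose $\rho$-alignments cycle through all residues. Iterating, the inductive hypothesis $E_{2\beta}$ = ``$\beta$-th Hausdorff equivalence (with alignment collapsed and $[T]$-types absorbed)'' survives both the transitive closure step (implementing the ``finitely apart'' merging that defines the next Hausdorff derivative) and the topological closure step (absorbing the newly created limit types and one more layer of alignment). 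Thus $X_T$ has TCPE iff $L_T$ is scattered, and the TCPE rank is sandwiched between $2\beta-1$ and $2\beta$; the case $2\beta-1$ occurs exactly when the final topological closure happens to produce both the closure and the transitive closure simultaneously (as in the rank-3 example where $E_3 = X^2$ directly), while $2\beta$ occurs when a further transitive closure is required (as in the $T = \{\lambda,(1,2)\}$ example where $E_3$ connects three $E_2$-classes in a chain but $E_4$ is needed to make this chain transitive).

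The main obstacle will be carefully managing alignments. On the one hand, alignments are essential for the base-case characterization of $E_1$: configurations differing by a shift outside $(j_\rho\mathbb{Z})^2$ will typically not form an entropy pair even with identical types. On the other hand, the ultimate correspondence with $L_T$ is insensitive to alignment, and one must verify that each topological closure step ``forgets'' exactly one additional level of alignment data without prematurely introducing cross-type identifications. Establishing this requires constructing concrete sequences of configurations whose alignments at level $\rho$ interpolate through all residues while types remain fixed, and checking that such sequences converge in the product topology. A secondary technical challenge is pinning down the $\pm 1$ in the theorem: one must analyze how the final Hausdorff equivalence class emerges, distinguishing the case where its construction requires the compound operation of closure followed by transitivity (yielding $2\beta$) from the case where a single topological closure already both adds the needed limit points and renders the resulting relation transitive (yielding $2\beta-1$).
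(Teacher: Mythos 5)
Your overall architecture matches the paper's: prove a lower bound ($E_{2\beta}$ can merge no faster than the $\beta$-th Hausdorff derivative $H_\beta$, via the fact that configurations whose types are infinitely far apart in $L_T$ cannot be transitivity pairs) and an upper bound (topological closure connects successor classes, so two TCPE steps realize one Hausdorff derivative), with the $\pm 1$ resolved by whether the final closure alone suffices. However, your base-case characterization of $E_1$ is wrong, and the error is fatal to the correspondence. You claim $x,y$ are an entropy-or-equal pair iff their longest common prefix $\rho\in T$ has agreeing alignments and the symbols immediately following $\rho$ are equal or \emph{adjacent} in the order $0<(0,1)<1<\cdots<\infty$. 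Take $T=\{\lambda,(1,2)\}$, $x$ of type $1$ and $y$ of type $(1,2)\concat 3$: the common prefix is $\lambda$ and the next symbols $1$ and $(1,2)$ are adjacent, so your criterion declares them an entropy pair. But there are infinitely many types between $1$ and $(1,2)\concat 3$ in $L_T$ (all of $(1,2)\concat 0,(1,2)\concat(0,1),\dots$), and the paper's Lemma \ref{lem:type-separation} shows such pairs are not even transitivity pairs. With your $E_1$, the transitive closure $E_2$ would already chain type $1$ through the $(1,2)$-regular types to type $2$, collapsing the rank of this example to $3$ rather than the correct $4$. The correct statement (Lemma \ref{lem:ep-successors}) is in terms of adjacency of the \emph{full} types in $L_T$, and even then only asserts that \emph{some} representatives $x'\equiv_\tlex x$, $y'\equiv_\tlex y$ form an entropy pair (e.g.\ the successor of type $\rho\concat n$ is $\rho\concat(n,n+1)\concat 0$, and only the special configuration tiled entirely by $A_{\rho\concat(n,n+1)}$ connects back to type $\rho\concat n$); a generic pair of adjacent-type configurations need not be an entropy pair.

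A secondary divergence: you propose to handle alignment by having each topological closure step "wash out" one level of alignment data, and you correctly flag this as the main obstacle, but you do not resolve it. The paper avoids the issue entirely by a different device: it fixes an \emph{alignment family} $a$, restricts the whole analysis to the subsets $X_T^a$ of mutually compatible alignments, proves $E_{2\beta}^a=H_\beta^a$ there (Lemmas \ref{lem:ep-successors} and \ref{lem:connectedness}), and only at the end glues the pieces using the facts that $1^{\mathbb Z^2}$ lies in every $X_T^a$ and that $E_{2\beta}$ is an equivalence relation. If you pursue your route you must actually exhibit convergent sequences of fixed type whose alignments interpolate, and verify this does not delay the hierarchy; the restricted-domain trick makes that verification unnecessary.
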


For the rest of this section, $T\subseteq (2\Omega)^{<\omega}$ is some tree.
The lexicographic order on types can be lifted 
to $X_T$. 

\begin{definition}
Define
a total pre-order on $X_T$ by 
$x \lex y$ if $\type_T(x) \leq_{\text{lex}} \type_T(y)$.
\end{definition}

The lifted pre-order relates to the topology of $X_T$ in the following sense.
\begin{lemma}\label{lem:limit}
If $x_n \rightarrow x \in X_T$ in its topology, and $\rho_1 <_\tlex \type_T(x) <_\tlex \rho_2$
for some types $\rho_1$ and $\rho_2$, then for sufficiently large $m$, we have 
$\rho_1 \lex \type_T(x_m) \lex \rho_2$.
\end{lemma}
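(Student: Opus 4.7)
The plan is to find a finite window $W \subseteq \mathbb Z^2$ such that every $y \in X_T$ with $y\uhr W = x\uhr W$ automatically satisfies $\rho_1 \le_\tlex \type_T(y) \le_\tlex \rho_2$. Since $x_n \to x$ yields $x_m\uhr W = x\uhr W$ for all sufficiently large $m$, this immediately gives the conclusion.

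Write $\tau = \type_T(x)$. The strict inequalities $\rho_1 <_\tlex \tau <_\tlex \rho_2$ in the lex order are witnessed by a finite amount of data about $\tau$: there is a finite prefix $\sigma \preceq \tau$ (with $\sigma = \tau$ allowed when $\tau$ is finite) such that any type $\nu \in T^+ \cup [T]$ which agrees with $\tau$ on the coordinates of $\sigma$ --- and, in the case $\sigma = \tau$, satisfies a mild restriction on the next coordinate --- automatically lies in $[\rho_1, \rho_2]$. This $\sigma$ can be read off from the positions at which $\rho_1, \rho_2$ first differ from $\tau$, using the characterization of the lex order. When $\sigma$ is a proper prefix of $\tau$ we have $\sigma \in T$; choosing $W$ large enough to contain several complete $A_\sigma$- and $B_\sigma$-copies from $x$'s $\sigma$-regular tiling suffices, because the restrictions $R_{\sigma'}$ for $\sigma' \preceq \sigma$ then propagate $\sigma$-regularity from $W$ to all of $y$, giving $\sigma \preceq \type_T(y)$.

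When $\sigma = \tau = \rho \concat t$ is finite, additionally enlarge $W$ so that it (a)~contains enough of $x$'s $\{A_\rho, B_\rho\}$-level squares to force, via the $F_\rho$ restrictions, $y$'s square-content at level $\rho$ into the narrow range compatible with $[\rho_1(|\rho|), \rho_2(|\rho|)]$; and (b)~witnesses the spacing/alignment irregularity of $x$'s squares which must exist, because $\rho$ is by definition the longest element of $T \cup [T]$ for which $x$ is regular, so $x$ fails to be $\rho\concat(n,n+1)$-regular for every $(n,n+1) \in 2\Omega$ with $\rho\concat(n,n+1) \in T$. Agreement on $W$ transfers these irregularities to $y$, ruling out any extension of $\type_T(y)$ past $\rho$ into such a $T$-child.

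The main technical obstacle is the case $\sigma = \tau$, where $\rho_1$ or $\rho_2$ is tight against $\tau$ in the lex order and one must simultaneously use the local pattern-appearance forced by $F_\rho$ and the global misalignment witnessed in $W$ to control $\type_T(y)$'s last coordinate tightly. The standing assumption $T \subseteq (2\Omega)^{<\omega}$ helps here: the admissible $T$-children of $\rho$ are restricted to even-indexed pairs and are therefore well-separated in the lex order, giving enough room to insulate $\tau$ from potential jumps in $\type_T(y)$.
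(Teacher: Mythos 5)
Your proposal is correct and is essentially the paper's own argument: the paper likewise reduces everything to finitely witnessed features of $x$ that must eventually appear in $x_m$ (presence of $n$- and $(n+1)$-squares on the alphabet $\{A_\rho,B_\rho\}$ to bound square sizes via restriction (3) of $F_\rho$, large blocks of $B_\rho$ in the $\infty$ case, and a finite pattern witnessing failure of $\rho\concat(n,n+1)$-regularity which, combined with $R_{\rho\concat(n,n+1)}$, blocks $\type_T(x_m)$ from descending into that $T$-child). The only difference is organizational --- the paper treats the lower and upper bounds separately with a case split on the first symbol where $\type_T(x)$ diverges from $\rho_1$ (resp.\ $\rho_2$), whereas you package the same witnesses into a single finite window $W$ --- so this is the same proof.
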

\begin{proof}  First, let $\rho\in T$ be longest such that $\rho \prec \rho_1$ 
and $\rho \prec \type_T(x)$.  Let $t_1$ and $t$ be the next symbols of each type, 
that is $\rho\concat t_1 \preceq \rho_1$ and $\rho\concat t \preceq \type_T(x)$. 
We know that $t_1 < t$.  If $t = (n,n+1)$, then $x$ contains both an 
$n$-square and an $(n+1)$-square on alphabet $\{A_\rho, B_\rho\}$, 
so for sufficiently large $m$, each $x_m$ also contains these features.  
So for sufficiently large $m$, we have $\rho\concat t \prec \type_T(x_m)$,
and thus $\rho_1 <_\tlex \type_T(x_m)$.  If $t = n$ for some $n \in \omega$, 
then $n\neq 0$ (otherwise $\rho_1$ could not be less), and for sufficiently 
large $m$, $x_m$ must also contain an $n$-square on alphabet $\{A_\rho,B_\rho\}$.
Therefore, we eventually have $\rho\concat (n-1,n) \lex \type_T(x_m)$.  
If $\rho_1 = \rho\concat (n-1,n)$, we are done with this case.  If $\rho_1$ 
is longer, that means that $\rho\concat (n-1,n) \in T$.  We know that 
$x$ is not $\rho\concat(n-1,n)$-aligned (if it were, we would have chosen 
a longer $\rho$).  Therefore, there is some pattern in $x$ that contains $n$-squares 
which are arranged in a way incompatible with such an alignment.  
For sufficiently large $m$, the configurations $x_m$ also contain such an arrangement. 
Thus for sufficiently large $m$, these $x_m$ cannot contain an $(n-1)$-square,
so $\rho\concat (n-1,n) <_\tlex \type_T(x_m)$.    Finally, if $t = \infty$, then $t_1$ 
is finite, but $x$ contains arbitrarily large blocks of $B_\rho$.  For sufficiently 
large $m$, each $x_m$ contains blocks of $B_\rho$ large enough to 
ensure that $\rho\concat t_1 <_\tlex \type_T(x_m)$.

The argument for the upper bound is similar.  Let $\rho\in T$ be longest such that 
$\rho \prec \rho_2$ and $\rho\prec \type_T(x)$, and let $t, t_2$ be such that 
$\rho\concat t \prec \type_T(x)$ and $\rho\concat t_2 \prec \rho_2$.  If 
$t \in \Omega$ the argument is as above.  If $t = n\in \omega$ then now $n=0$ is possible, 
but in any case the argument is as above.  And $t=\infty$ is 
not possible because then there is nothing that $\rho_2$ could be.
\end{proof}

Let $H_\alpha \subseteq X_T^2$ be the associated liftings of the Hausdorff derivatives $L_T^\alpha$ 
of $L_T$.  That is, if we let $[\rho]_\alpha$ denote the equivalence class of the type $\rho$ 
in $L_T^\alpha$, we have
$$H_\alpha = \{(x,y) : \type_T(x) \in [\type_T(y)]_\alpha\}.$$

In particular, $H_1$ is the set of all $(x,y)$ for which there are only finitely many 
types between $\type_T(x)$ and $\type_T(y)$.

\begin{lemma}\label{lem:type-separation}
For $x,y \in X_T$, if there are infinitely many types between $\type_T(x)$ and $\type_T(y)$,
then $x$ and $y$ 
are not a transitivity pair.
\end{lemma}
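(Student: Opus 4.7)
The plan is to use Lemma \ref{lem:limit} to exhibit patterns $v \in x$ and $v' \in y$ whose ``compatible-type'' ranges are lex-disjoint, so that no single configuration of $X_T$ can contain both.

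Without loss of generality we assume $\mu := \type_T(x) <_\tlex \type_T(y) =: \nu$. Since infinitely many types of $L_T$ lie strictly between $\mu$ and $\nu$, we fix two of them, $\tau_1, \tau_2 \in L_T$, with $\mu <_\tlex \tau_1 <_\tlex \tau_2 <_\tlex \nu$. Applying Lemma \ref{lem:limit} to sequences converging to $x$ (using only its upper-bound half with $\rho_2 = \tau_1$, or equivalently inspecting that half of its proof) shows that $\{z \in X_T : \type_T(z) \leq_\tlex \tau_1\}$ contains a topological neighborhood of $x$. Since cylinders form a base for the topology of $X_T$ and $\type_T$ is shift-invariant, we can then extract a pattern $v$ appearing in $x$ such that every configuration in which $v$ appears anywhere has type $\leq_\tlex \tau_1$. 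Symmetrically, applying the lower-bound half of Lemma \ref{lem:limit} at $y$ with $\rho_1 = \tau_2$ yields a pattern $v'$ appearing in $y$ such that every configuration in which $v'$ appears has type $\geq_\tlex \tau_2$.

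Since $\tau_1 <_\tlex \tau_2$, no configuration of $X_T$ can have type both $\leq_\tlex \tau_1$ and $\geq_\tlex \tau_2$. Hence $v$ and $v'$ never coexist, and $(x, y)$ is not a transitivity pair. The only subtle point will be the routine passage from the sequential conclusion of Lemma \ref{lem:limit} to openness at the single points $x, y$, and from there to separating cylinders; this is immediate because $X_T$ is a metrizable subshift with a basis of cylinders. Observe that the hypothesis of infinitely many intermediate types is used only to ensure the existence of the two strictly intermediate types $\tau_1 <_\tlex \tau_2$.
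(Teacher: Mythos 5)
Your proof is correct, and it takes a genuinely different route from the paper's. The paper argues the contrapositive directly: assuming $x$ and $y$ are a transitivity pair, it takes the longest common prefix $\rho\in T$ of the two types and runs a case analysis on the next symbols $t_x<t_y$, using the square-size rules and the restrictions $R_\rho$, $F_\rho$ to show that coexistence of large patterns forces the two types to be adjacent (or nearly so) in $L_T$. You instead reuse Lemma~\ref{lem:limit}: its two halves --- which, as you correctly note, are independent in the proof even though the statement hypothesizes both bounds --- show that a large central pattern of $x$ pins the type of any configuration containing it into a one-sided lex-interval, and likewise for $y$; choosing two strictly intermediate types $\tau_1<_\tlex\tau_2$ makes the two intervals disjoint, so the patterns cannot coexist. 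The bridge from the sequential statement to a separating cylinder is sound (first countability of $X_T$ plus shift-invariance of $\type_T$), and the conclusion you obtain is formally a bit stronger than the lemma (two strictly intermediate types already suffice). What your route buys is economy: the combinatorial case analysis is done once, in the proof of Lemma~\ref{lem:limit}, rather than repeated. What the paper's direct argument buys is the finer by-product classification of exactly which near-miss pairs survive as transitivity pairs (e.g.\ that when $\rho\concat(n,n+1)\in T$ the only admissible partner of type $\rho\concat n$ from above is the all-$A_{\rho\concat(n,n+1)}$ configuration), information that is echoed in Lemma~\ref{lem:ep-successors}. The one point requiring care, which you flag appropriately, is that you must cite the one-sided content of the proof of Lemma~\ref{lem:limit} rather than its literal statement, since $\type_T(x)$ or $\type_T(y)$ may be lex-extremal in $L_T$.
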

\begin{proof}  We can assume that $x <_\tlex y$.  Supposing that $x$ and $y$ 
are a transitivity pair, we show that there are not infinitely many types between them.
Let $\rho\in T$ be longest such that $\rho \prec \type_T(x)$ and $\rho \prec \type_T(y)$. 
Let $t_x, t_y \in \omega\cup \Omega \cup \{\infty\}$ such that 
$\rho\concat t_x \prec \type_T(x)$ and $\rho\concat t_y \prec \type_T(y)$.  
Then $t_x < t_y$.  Thus $t_x \neq \infty$, so $x$ contains a copy of $A_\rho$, 
and $t_y \neq 0$, so $y$ contains a copy of $B_\rho$.  Therefore, 
any configuration in which large patterns from $x$ and $y$ coexist 
contains both an $A_\rho$ and a $B_\rho$, and is thus subject to 
the restrictions $R_\rho$ and $F_\rho$.  Relative to the alphabet 
$\{A_\rho, B_\rho\}$, if $x$ contains an $n$-square, then 
$y$ cannot contain any square larger than $n+1$ (including infinite 
partial squares).  

So if $t_x = n$, then $t_y \in \{(n,n+1), n+1\}$.  If $(n,n+1)\not\in T$, 
then there are only finitely many types between $x$ and $y$, 
so we are done.  If $(n,n+1) \in T$, then $y$ 
cannot contain $(n+1)$-square.  If it did, 
$x$ and $y$ would not be a transitivity pair, because an element with sufficiently 
large patterns from $x$ contains a pattern which has $n$-squares
but is not $\rho\concat(n,n+1)$-aligned; if such an element also contains 
$(n+1)$-square, it would be forbidden.  Therefore, if $(n,n+1) \in T$, the only 
possibility is $t_y = (n,n+1)$, but $y$ contains only $n$-squares, 
that is, $y$ is an array of $A_{\rho\concat(n,n+1)}$, 
and $\type_T(y) = \rho\concat(n,n+1)\concat 0$.  Thus $x$ and $y$ have successor 
types in this case.  

Since $t_x \neq \infty$, the last case to consider is if $t_x = (n,n+1)$ for some $n$.
Then the only option for $t_y$ is $t_y = n+1$.  If $(n,n+1)\not\in T$, we are done, 
so suppose that $(n,n+1)\in T$.  Then $x$ cannot contain an $n$-square.  If it 
did, then any configuration containing large patterns from $x$ and $y$ would 
contain an $n$-square from $x$, an $(n+1)$-square from $y$, and an arrangment 
of $(n+1)$-squares from $y$ that is incompatible with a $\rho\concat(n,n+1)$-alignment; 
this is forbidden.  Since $x$ contains no $n$-squares, it must be that $x$ is 
an infinite array of $B_{\rho\concat(n,n+1)}$, and thus $\type_T(x) = \rho\concat(n,n+1)\concat\infty$.
Therefore, $x$ and $y$ have successor types.
\end{proof}

It follows that if $x$ and $y$ do not satisfy $xH_1 y$, then $x$ and $y$ are not an entropy pair, 
and furthermore there is no finite chain $x = z_0,z_1,\dots,z_n=y$ in which each $z_i,z_{i+1}$
is an entropy pair (between some $z_i$ and $z_{i+1}$ there will be infinitely many types).
This shows that $E_2 \subseteq H_1$.  

The relations $H_\beta$ have the following nice property.

\begin{definition}
A equivalence relation $F \subseteq X_T^2$ is \emph{interval-like} if for all $x \lex y \lex z$ in $X_T$, 
if $xFz$ then $xFy$ and $yFz$.
\end{definition}

Note that if $F$ is interval-like, there is a natural ordering on the equivalence classes of $F$ defined by 
$[x]_F < [y]_F$ if and only if for all $x' \in [x]_F$ and all $y' \in [y]_F$, we have $x' <_{\text{lex}} y'$.

To show the first half of Theorem \ref{thm:tcpe-hausdorff}, we take advantage of the 
fact that the subshift $X_T$ was designed to have topological connectedness
roughly corresponding to the order topology on $L_T$.  Since the topological 
closure operation cannot do much more than connect elements from 
successor equivalence classes,\footnote{Technically a connection is 
possible from $[x]_\beta$ to $[y]_\beta$
if there is at most one equivalence 
classes between $[x]_\beta$ and $[y]_\beta$, but this is good enough.}
the combined double-operation of the TCPE process cannot connect things 
faster than the Hausdorff derivative.

\begin{lemma}\label{lem:ep-slowerthan-h}
For all $\beta \geq 1$, we have $E_{2\beta} \subseteq H_\beta$.
\end{lemma}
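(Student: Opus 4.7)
I would prove Lemma~\ref{lem:ep-slowerthan-h} by transfinite induction on $\beta$. The base case $\beta = 1$ is already established in the discussion after Lemma~\ref{lem:type-separation}: every entropy-or-equal pair is a transitivity pair, so $E_1 \subseteq H_1$, and since $H_1$ is an equivalence relation the transitive-symmetric closure $E_2$ also lies in $H_1$. At limit ordinals the inclusion follows at once from the inductive hypothesis since both hierarchies are defined by union.

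The heart of the argument is the successor step. Assuming $E_{2\beta} \subseteq H_\beta$, I would first verify by an auxiliary induction that each $H_\beta$-class is an interval in $L_T$; this propagates through both the Hausdorff derivative (derived classes are unions of finitely many consecutive interval classes) and through the limit unions (nested interval classes sharing a common element remain intervals). The key geometric claim is then that the topological closure of $H_\beta$ only connects pairs whose $H_\beta$-classes are equal or differ by at most one class in the quotient order $L_T^\beta$. Suppose for contradiction that $(x_n, y_n) \to (x, y)$ with $x_n H_\beta y_n$ and that two distinct classes $C_1 < C_2$ of $L_T^\beta$ lie strictly between $[\type_T(x)]_\beta$ and $[\type_T(y)]_\beta$. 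Pick $\rho_i \in C_i$, so $\type_T(x) < \rho_1 < \rho_2 < \type_T(y)$ in $L_T$. The two directional bounds of Lemma~\ref{lem:limit} are proved independently in its proof, so the one-sided versions apply: for sufficiently large $n$, $\type_T(x_n) \leq \rho_1$ and $\rho_2 \leq \type_T(y_n)$. Because the common class $[\type_T(x_n)]_\beta$ is an interval containing both $\type_T(x_n)$ and $\type_T(y_n)$, it also contains $\rho_1$ and $\rho_2$; but then $\rho_1 H_\beta \rho_2$, forcing $C_1 = C_2$, a contradiction.

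The key claim places $E_{2\beta+1}$ inside the reflexive-symmetric relation ``$[\type_T(x)]_\beta$ and $[\type_T(y)]_\beta$ differ by at most one class of $L_T^\beta$''. Since the quotient is linearly ordered and this relation already contains adjacency, its transitive closure is exactly the relation of being at finite distance in $L_T^\beta$, which is $H_{\beta+1}$. Hence $E_{2\beta+2} \subseteq H_{\beta+1}$, completing the successor step. The main subtlety is the careful one-sided invocation of Lemma~\ref{lem:limit}, together with the handful of edge cases where $\type_T(x)$ or $\type_T(y)$ is at an extremum of $L_T$: in those cases no type lies beyond, and the assumption that two distinct classes lie strictly between collapses, forcing $x$ and $y$ into the same $H_\beta$-class where nothing needs to be proved.
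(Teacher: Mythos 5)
Your proof is correct and follows essentially the same route as the paper: induction on $\beta$ with the base case from Lemma~\ref{lem:type-separation}, and a successor step that uses the interval-like structure of the $H_\beta$-classes together with Lemma~\ref{lem:limit} to show the topological closure of $H_\beta$ cannot jump over two distinct intermediate classes, after which the transitive/symmetric closure lands inside $H_{\beta+1}$. Your explicit remarks on the one-sided use of Lemma~\ref{lem:limit} and on verifying interval-likeness are just slightly more detailed versions of steps the paper takes for granted (note only that a derived Hausdorff class may be a union of infinitely many consecutive classes, though it is still an interval, which is all that is needed).
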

\begin{proof} By induction.
The case $\beta=1$ was dealt with above. The limit case is clear.  For the successor 
case, suppose that $E_{2\beta} \subseteq H_\beta$.  Since $E_{2\beta+2}$ 
is obtained by taking two closure operations on $E_{2\beta}$, we are done if 
we can show that applying the same two closure operations to $H_\beta$ 
yields a subset of $H_{\beta+1}$.

Let $H_\beta'$ denote the topological closure of $H_\beta$ in $X_T^2$.  
If $(x,y) \in H_\beta'$, then there is a sequence $(x_n,y_n)_{n\in \omega}$ 
of elements of $H_\beta$
whose limit is $(x,y)$.  
Without loss of generality, assume that $[x]_\beta <  [y]_\beta$.   
We would like to conclude that there are only finitely many $H_\beta$-equivalence
classes between $[x]_\beta$ and $[y]_\beta$.  Suppose for contradiction 
that there are $z,w \in X_T$ such that $[x]_\beta < [z]_\beta < [w]_\beta < [y]_\beta$.
Because $H_\beta$ is interval-like, we have $x <_\tlex z$, so 
by Lemma \ref{lem:limit}, eventually $x_n \lex z$.  Similarly, eventually 
we have $w \lex y_n $.  However, this is a contradiction, because 
each $(x_n,y_n) \in H_\beta$ and $H_\beta$ is interval-like.

Let $H_\beta''$ denote the symmetric and transitive closure of $H_\beta'$. 
If $(x,y) \in H_\beta''$ then there is a sequence $x=z_1,\dots,z_n=y$ 
such that each $(z_i,z_{i+1}) \in H_\beta'$.  Therefore there are at most 
finitely many $H_\beta$-equivalence classes between $[x]_\beta$ and $[y]_\beta$.
So $(x,y) \in H_{\beta+1}$.
\end{proof}

Now we turn our attention to showing that the TCPE process connects things 
as quickly as the Hausdorff derivative allows, subject to a small caveat about alignment.

Take as an example the case $T = \{\lambda, (1,2)\}$ which was discussed 
at the start of this section.  It is easy to construct configurations $x,y \in X_T$ 
where for example each may have type $(1,2)\concat 3$, and yet $x$ and $y$ 
are not an entropy pair, because the $\{A_{(1,2)},B_{(1,2)}\}$ grid of $x$ may 
be off by a single pixel shift from the corresponding grid of $y$.  Thus patterns 
of $x$ and $y$ cannot be independently swapped.  Now, $x$ and $y$ 
are a transitivity pair, so we could solve this issue by putting $X_T$ on fabric
as in Theorem \ref{thm:pavlov-modified}.  However, we have not yet added in the 
computation part, and if we make $X_T$ all wiggly before adding the computation, 
the required algorithm becomes very messy.   
It is simpler instead to notice that all we need 
to prove the theorem is to show that the TCPE derivation process 
already proceeds fast 
enough when its domain is restricted to a subset of $X_T$ on which 
all alignments are compatible.  If the derivation process goes fast 
enough on each restricted domain, its progress
cannot get slower when all the domains are considered together.

\begin{definition}
An \emph{alignment family} is a function 
$a: T \rightarrow \mathbb N^2$ such  
that for every $\rho \in T$, there is an $x \in X_T$ such that 
$\rho \prec \type_T(x)$ and $a(\tau)$ is the $\tau$-alignment of $x$
for all $\tau \preceq \rho$.
\end{definition}
If you imagine building such $a$ in layers starting with defining $a(\lambda)$ (which has to be (0,0)), 
then defining $a$ on all strings of length 1, etc, 
 at each $\rho$ the values of $a(\tau)$ for $\tau\prec \rho$, as well as 
 the value at $\rho(|\rho|-1)$, place some simple and deterministic 
 restrictions on what $a(\rho)$ 
 can be, and other than that you have free choice of $a(\rho)$. 
 The important point, which 
 the reader can verify, is that for every $x \in X_T$, there is an alignment family $a$
such that for all $\rho \in T$ with $\rho \prec \type_T(x)$, we have 
$a(\rho)$ is the $\rho$-alignment of $x$.  Just fill in $a$ in layers, 
but when there is a choice at some $\rho \prec \type_T(x)$, copy 
the alignment of $x$.

Given an alignment family $a$, let $X_T^a$ denote the subset of $X_T$ 
consisting of all those $x$ for which the $\rho$-alignment of $x$ 
is $a(\rho)$ for every $\rho$ for which $x$ is $\rho$-regular.  We see that 
$X_T = \cup_a X_T^a$, however any given $x$ is in multiple of these sets, 
and some $x$, such as for example $1^{\mathbb Z^2}$, are in all of them.
For each ordinal $\alpha$ and alignment family $a$, let $H_\alpha^a$ 
and $E_\alpha^a$ denote
denote the restrictions of $H_\alpha$ and $E_\alpha$ to $X_T^a$ respectively.

\begin{lemma}\label{lem:ep-successors}
Suppose $x,y \in X_T$ such that 
$x$ and $y$ have the same $\rho$-alignment for any $\rho$ 
such that $x$ and $y$ are both $\rho$-regular.  Then 
\begin{enumerate}
\item If $\type_T(x) = \type_T(y)$ then $x$ and $y$ are an entropy-or-equal pair.
\item If there are no types strictly between $\type_T(x)$ and $\type_T(y)$,
then there are $x' \equiv_\tlex x$ and $y' \equiv_\tlex y$, with the same 
alignments as $x$ and $y$, such that $x'$ and $y'$ are 
an entropy-or-equal pair.
\end{enumerate}
\end{lemma}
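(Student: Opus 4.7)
The overall strategy is to reduce to the base case handled by Lemma \ref{lem:extension}, by re-reading configurations on the appropriate alphabet $\{A_\rho,B_\rho\}$ at which they still have a finite (non-successor-of-the-tree-branch) type. I will first establish an analog of Lemma \ref{lem:extension} relative to an arbitrary alphabet $\{A_\rho,B_\rho\}$: for any $\rho\in T$ and any finite type $t\in \omega\cup \Omega$ with $\rho\concat t\in T^+$, and any pattern $v$ consistent with type $\rho\concat t$, there is an $N$ such that $v$ can be extended to a rectangular region of side $\geq |v|+N$ whose $\rho$-grid is aligned with that of $v$, which consists of complete $\{A_\rho,B_\rho\}$-squares of the sizes allowed by $t$, and whose boundary avoids $B_\rho$-to-$B_\rho$ adjacency. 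This is proved by taking the pattern, passing through the natural factor map to the alphabet $\{A_\rho,B_\rho\}$, and invoking Lemma \ref{lem:extension} there verbatim; compatibility of $\rho$-alignment ensures the rectangular extensions glue together when placed in a grid.

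For part (1), let $\type_T(x)=\type_T(y)=s$. If $s$ is infinite, then $s\in[T]$, and both $x$ and $y$ are $\rho$-regular with matching alignment for every $\rho\prec s$; given patterns $v\preceq x$ and $w\preceq y$, choose $\rho\prec s$ long enough and $t\in \omega\cup\Omega$ with $\rho\concat t\in T^+$ large enough that $v$ and $w$ are both consistent with type $\rho\concat t$ (possible because large-enough finite types on alphabet $\{A_\rho,B_\rho\}$ are consistent with the interior of an infinite-degenerate square, and alignments agree). If $s$ is finite, write $s=\rho\concat t$ directly. In either case, partition $\mathbb Z^2$ into plots of side $2N+k$ aligned to the $\rho$-grid, place a copy of $v$ or $w$ (independent choice) in the center of each plot, and fill the plots using the extension lemma on $\{A_\rho,B_\rho\}$. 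The resulting configurations lie in $X_T$ because all restrictions $R_\tau$ and $F_\tau$ for $\tau\in T$ are satisfied: any $\tau\not\preceq \rho\concat t$ either imposes no condition, or imposes a condition whose trigger (the coexistence of two specific patterns) does not occur. Positive density of independent placement yields entropy-pairhood.

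For part (2), I would do a case analysis on the successor types. Let $\rho\in T$ be longest with $\rho\prec \type_T(x),\type_T(y)$, and let $t_x<t_y$ be the next symbols. From the proof of Lemma \ref{lem:type-separation}, the possible successor configurations are: (a) $(t_x,t_y)\in\{(n,(n,n{+}1)),((n,n{+}1),n{+}1),(n,n{+}1)\}$ with $\rho\concat(n,n{+}1)\notin T$; (b) $t_x=n$, $\type_T(y)=\rho\concat(n,n{+}1)\concat 0$, with $\rho\concat(n,n{+}1)\in T$; or (c) $\type_T(x)=\rho\concat(n,n{+}1)\concat\infty$, $t_y=n{+}1$, with $\rho\concat(n,n{+}1)\in T$. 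In case (a) the argument of part (1) adapts directly: pick a common chimera type $\rho\concat(n,n{+}1)\in T^+$ witnessed by both sides, and apply the plot construction, using $x'=x$, $y'=y$. In cases (b) and (c), one of the two configurations is the rigid array of $A_{\rho\concat(n,n{+}1)}$ or $B_{\rho\concat(n,n{+}1)}$; here I replace that rigid configuration by a shift $x'$ (or $y'$) whose $(\rho\concat(n,n{+}1))$-alignment matches $a$, and use the corresponding rigid array $y'$ (or $x'$) of pure type on the alphabet $\{A_{\rho\concat(n,n{+}1)}, B_{\rho\concat(n,n{+}1)}\}$, noting $\equiv_\tlex$-equivalence is preserved since type is a function of content only and not position. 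Then the two pure types on the enlarged alphabet differ by exactly one, so part (1) applied on alphabet $\{A_{\rho\concat(n,n{+}1)},B_{\rho\concat(n,n{+}1)}\}$ together with the base entropy-pair argument from Lemma \ref{lem:ep-via-type} (adjacent pure types are entropy pairs, witnessed by chimera configurations) finishes the proof.

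The main obstacle is the rigid cases (b) and (c): ensuring that when one configuration is forced to be a uniform array of some pattern $P\in\{A_{\rho\concat(n,n{+}1)},B_{\rho\concat(n,n{+}1)}\}$, the alignment data one picks for $x'$ or $y'$ is compatible with $a$ and with completing chimera blocks independently at positive density without violating $R_{\rho\concat(n,n{+}1)}$. The key point is that inside any single $(\rho\concat(n,n{+}1))$-cell, local modifications are invisible to the restriction $R_{\rho\concat(n,n{+}1)}$, so one has full freedom provided the outer grid is respected; this is exactly what matching $a$ on all common-regularity levels guarantees.
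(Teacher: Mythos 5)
Your overall architecture matches the paper's: the same relativization of Lemma \ref{lem:extension} to the alphabet $\{A_\rho,B_\rho\}$, the same plot construction for equal types, the same successor case split, and---crucially---the same replacement of a generic type-$\rho\concat(n,n+1)\concat 0$ (resp.\ $\rho\concat(n,n+1)\concat\infty$) configuration by the rigid all-$A_{\rho\concat(n,n+1)}$ (resp.\ all-$B_{\rho\concat(n,n+1)}$) array with matching alignment. That replacement is the heart of the paper's Cases 1 and 4, and you found it.

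However, your final gluing step in cases (b) and (c) is wrong as stated. You propose to finish by ``part (1) applied on alphabet $\{A_{\rho\concat(n,n+1)},B_{\rho\concat(n,n+1)}\}$'' together with ``adjacent pure types are entropy pairs, witnessed by chimera configurations.'' Neither works here. In case (b) the configuration $x$ of type $\rho\concat n$ is \emph{not} $\rho\concat(n,n+1)$-regular, so it cannot be read on the enlarged alphabet at all, and part (1) on that alphabet does not apply to the pair $(x,y')$. Worse, chimera configurations cannot witness anything in these cases: since $\rho\concat(n,n+1)\in T$, the restriction $R_{\rho\concat(n,n+1)}$ forbids any configuration containing both an $(n+1)$-square and an arrangement of $n$-squares incompatible with $\rho\concat(n,n+1)$-regularity, and sufficiently large patterns of $x$ exhibit such an arrangement. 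This is exactly the mechanism of Lemma \ref{lem:type-separation}. The correct conclusion (and the paper's) is that the connection is made entirely \emph{within} the pure type $\rho\concat n$ (resp.\ $\rho\concat(n+1)$) on the alphabet $\{A_\rho,B_\rho\}$: the rigid array $y'$, read on $\{A_\rho,B_\rho\}$, contains only $n$-squares---it is only ``coincidentally'' regular on the enlarged alphabet---so $x$ and $y'$ are glued by the equal-pure-type plot construction, producing connecting configurations with no $(n+1)$-squares and hence never triggering $R_{\rho\concat(n,n+1)}$. Two smaller points: your handling of the type $\rho\concat\infty$ (a finite string ending in $\infty$, not an element of $[T]$) should be placed in the finite-$s$ branch and requires replacing $\infty$ by a large finite chimera type; and both there and in the $[T]$ case you need arbitrarily large chimera types $\rho\concat(m,m+1)\notin T$ with no alignment constraint, which is exactly where the hypothesis $T\subseteq(2\Omega)^{<\omega}$ is used---worth saying explicitly, since your claim that such $t$ exists ``large enough'' silently depends on it.
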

\begin{proof}
Without loss of generality assume that $x \lex y$.  Let $\rho\in T \cup [T]$ be longest such 
that $\rho\prec\type_T(x)$ and $\rho\prec\type_T(y)$.  If $\rho \in T$
then $x$ and $y$ can both be understood 
as configurations on the alphabet $\{A_\rho,B_\rho\}$.  In the following cases we 
will refer several times to $n$-squares and appeal several times to Lemma \ref{lem:ep-via-type}.
In all cases, these references should be understood with respect to the alphabet 
$\{A_\rho,B_\rho\}$.

{\bf Case 1.} Suppose $\type_T(x) = \rho\concat n$ where $n \in \omega$.  If $\type_T(y)$
 is either $\rho\concat n$ or $\rho\concat(n,n+1)$, then $x$ and $y$ are an entropy pair 
 by Lemma \ref{lem:ep-via-type}.  If $\rho\concat(n,n+1) \in T$ then the 
 $L_T$-successor of $\rho\concat n$ 
 is $\rho\concat(n,n+1)\concat 0$.  If this is $\type_T(y)$, let $y'$ be a 
 configuration which consists of tiling the plane with $A_{\rho\concat(n,n+1)}$, using $A_\rho$
 and $B_\rho$ in the appropriate alignment.  This configuration counts as type 0 relative to the alphabet 
 $\{A_{\rho\concat(n,n+1)},B_{\rho\concat(n,n+1)}\}$ because squares are allowed to 
 be flush next to each other, so this is the tightest possible packing of 0-squares
 relative to $\{A_{\rho\concat(n,n+1)},B_{\rho\concat(n,n+1)}\}$.  Therefore, $y'\equiv_\tlex y$.
  (This is why we must permit squares to touch, and it is
  the only place where that distinction is needed).  Now $y'$, like $x$, 
  contains only $n$-squares; it is just a coincidence that $y'$ is $\rho\concat (n,n+1)$-regular, 
 and the entropy pairhood of $x$ and $y'$ can be realized by elements of 
 type $\rho\concat n$.
 
 {\bf Case 2.}  Suppose $\type_T(x) = \rho\concat \infty$.  Since $\rho \prec \type_T(y)$, 
 it must be that $\type_T(y)$ is also $\rho\concat \infty$.  The proof 
 of Lemma \ref{lem:ep-via-type} for the case where both configurations have 
 type $\infty$ shows that $x$ and $y$ will be an entropy-or-equal 
 pair if there are infinitely many $t$ such that $\rho\concat(n,n+1)\not\in T$. 
 This condition is satisfied because $T \subseteq (2\Omega)^{<\omega}$.  This is 
 one of two places where it is used that $T \subseteq (2\Omega)^{<\omega}$ rather than 
 $\Omega^{<\omega}$.
 
 {\bf Case 3.} Suppose $\type_T(x) = \rho\concat(n,n+1)$, where $(n,n+1)\not\in T$.  Then 
 $\type_T(y)$ is one of $\rho\concat(n,n+1)$ or $\rho\concat(n+1)$, and $x$ and $y$ 
 are an entropy pair by Lemma \ref{lem:ep-via-type}.
 
 {\bf Case 4.} Suppose $\rho\concat(n,n+1) \prec \type_T(x)$, where $\rho\concat(n,n+1)\in T$. 
 By the choice of $\rho$ we know that $y$ is not $\rho$-aligned, so the only way to
 get $x$ and $y$ to be $\lex$-successors is if $\type_T(x) = \rho\concat(n,n+1)\concat\infty$ 
 and $\type_T(y) = \rho\concat (n+1)$.  Let $x'$ be a configuration which 
 consists of tiling the plane with $B_{\rho\concat(n,n+1)}$, using $A_\rho$ and $B_\rho$ 
 with the proper alignment.  Then 
 $x' \equiv_\tlex x$, and $x'$ contains only $(t+1)$-squares; it is just a coincidence 
 that $x'$ is $\rho\concat(n,n+1)$-regular, and the entropy pairhood of $x'$ 
 and $y$ can be realized by elements of type $\rho\concat(n+1)$.
 
 {\bf Case 5.} Suppose that $\type_T(x) = \rho$ where $\rho \in [T]$.  Then $\rho$ 
 has no $\lex$-successor, so $\type_T(y) = \rho$ as well.  Suppose that $u$ is a finite 
 pattern in $x$ and $v$ is a finite pattern in $y$.  Then for some $\tau \prec \rho$, 
 $u$ and $v$ are each contained in a $2\times 2$ block of the alphabet $\{A_\tau, B_\tau\}$, 
 so assume that $u$ and $v$ are each just a $2\times 2$ block on that alphabet.
 Based on just a $2\times 2$ block, the potential types of $u$ and $v$ relative to alphabet 
 $\{A_\tau, B_\tau\}$ are almost unrestricted (the only restriction is if all four are $B_\tau$, 
 in which case there must be a $n$-square with $n \geq 2$ relative to alphabet 
 $\{A_\tau, B_\tau\}$).  So by Lemma \ref{lem:extension}, we may make 
 independent choices of $u$ and $v$ on a set of small enough positive density 
 and fill in the gaps to produce a
 configuration of type $\tau\concat(2n+1,2n+2)$ for any $n \geq 1$.
 We have no alignment restrictions (beyond sticking to the alphabet $\{A_\tau, B_\tau\}$)
 when building a configuration of this type because 
 $T \subseteq(2\Omega)^{<\omega}$ (the second of two places where this 
 assumption on $T$ is used.)  Therefore, $x$ and $y$ are an entropy-or-equal pair.
 \end{proof}

Therefore, we may conclude that for each $a$, we have $H_1^a = E_2^a$.
(We have already shown previously that $E_2 \subseteq H_1$.)

\begin{lemma}\label{lem:connectedness}
Fix an alignment family $a$.  
If $F$ is an interval-like equivalence relation on $X_T^a$ with $H_1^a \subseteq F$, 
and $[x]_F,[y]_F$
are a successor pair of $F$-equivalence classes, then the topological 
closure of $F$ contains a pair $(x',y')$ with $x' \in [x]_F$ and $y' \in [y]_F$.
\end{lemma}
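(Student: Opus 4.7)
The plan is to analyze the two successor $F$-classes via the types of their configurations and to construct an approximating sequence lying entirely in one $F$-class whose limit straddles both. Because $F$ is interval-like and contains $H_1^a$, and because same-type, same-alignment configurations are already entropy-or-equal pairs by Lemma \ref{lem:ep-successors}(1), the type sets $I_x = \{\type_T(z) : z \in [x]_F\}$ and $I_y = \{\type_T(z) : z \in [y]_F\}$ form successor intervals in $L_T$.

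First I would rule out the case where $I_x$ has a $\lex$-maximum $\tau_x$ and $I_y$ has a $\lex$-minimum $\tau_y$. Since $I_x$ and $I_y$ are successors, $\tau_x$ and $\tau_y$ would be consecutive in $L_T$, so Lemma \ref{lem:ep-successors}(2) yields $x_0, y_0 \in X_T^a$ of types $\tau_x, \tau_y$ respectively (with alignments matching $a$) that form an entropy-or-equal pair. This places $(x_0, y_0)$ in $H_1^a \subseteq F$ with $x_0 \in [x]_F$ and $y_0 \in [y]_F$, contradicting that the two classes are distinct.

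Next I would argue that the common $L_T$-boundary between $I_x$ and $I_y$ is always realized as some type. The key is a Dedekind-completeness property of $L_T$: any strictly increasing, bounded sequence of types has its supremum in $L_T$, taking the form $\rho\concat\infty$ for some $\rho \in T$ (if the sequence eventually stabilizes to prefix $\rho$) or $\rho \in [T]$ (if its prefixes grow). Since $I_x, I_y$ are successor intervals, this supremum must belong to one of them, and by the previous step not both. Without loss of generality assume $\tau := \min I_y$ exists while $I_x$ has no maximum; the dual case is handled symmetrically.

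Finally, I would construct the approximating sequence. Fix $x' \in [x]_F$ and a target $y' \in [y]_F$ of type $\tau$. Choose a cofinal sequence of types $\tau_n \in I_x$ with $\tau_n \nearrow \tau$: if $\tau = \rho\concat\infty$ take $\tau_n = \rho\concat n$, and if $\tau = \rho \in [T]$ take $\tau_n = \rho\uhr n \concat 0$. For each $n$, I would build $y_n \in X_T^a$ of type $\tau_n$ that agrees with $y'$ on a central $n \times n$ window, by using Lemma \ref{lem:extension} together with the patching strategy from the proof of Lemma \ref{lem:ep-via-type} to complete the configuration compatibly with the alignment family $a$. Then $y_n \in [x]_F$ (because $\tau_n \in I_x$), so $(x', y_n) \in F$, and $(x', y_n) \to (x', y')$, witnessing $(x', y') \in \overline F$ across $[x]_F$ and $[y]_F$.

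The principal technical obstacle is this last construction: producing, inside $X_T^a$, a $\tau_n$-configuration that matches a prescribed central patch of $y'$ while respecting the alignment family $a$. The reason it succeeds is that $y'$, being of limit type $\tau$, contains only $\rho$-squares of bounded size in any finite patch, so its central window is consistent with type $\tau_n$ for all large enough $n$; Lemma \ref{lem:extension} then produces the required completion.
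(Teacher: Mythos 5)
Your overall architecture matches the paper's: you rule out the case where both classes have realized endpoints (adjacent types would be $H_1^a$-related, collapsing the classes), observe that the cut is therefore realized as a limit type of the form $\rho\concat\infty$ with $\rho\in T$ or $\rho\in[T]$, and then produce a sequence inside one class converging to a point of the other. The paper's Case 1 and Case 2 are exactly your two shapes of limit type.

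However, there is a genuine gap in your final construction in the case $\tau=\rho\in[T]$. You fix an \emph{arbitrary} $y'\in[y]_F$ of type $\rho$ and claim that its central window is consistent with type $\tau_n=(\rho\uhr n)\concat 0$ for large $n$, justified by saying $y'$ ``contains only $\rho$-squares of bounded size in any finite patch.'' That justification is tailored to the $\rho\concat\infty$ case and is false here: a configuration of type $(\rho\uhr n)\concat 0$ is (up to one-pixel gaps) an array of $A_{\rho\uhr n}$ containing no squares of positive size on the alphabet $\{A_{\rho\uhr n},B_{\rho\uhr n}\}$, whereas any sufficiently large window of a type-$\rho$ configuration, read at level $n$, exhibits $m_n$- and $(m_n+1)$-squares (where $\rho(n)=(m_n,m_n+1)$) and in particular contains $B_{\rho\uhr n}$ symbols. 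Moreover, the window of an arbitrary $y'$ may sit inside a $B_{\rho\uhr n}$ cell, so it need not match the interior of any all-$A_{\rho\uhr n}$ configuration at all. The repair is easy but must be made: since the lemma only asks for \emph{some} pair $(x',y')$ and all type-$\rho$ elements of $X_T^a$ lie in one $F$-class, you may either (a) choose $y'$ to be the canonical configuration obtained as the nested limit of the all-$A_{\rho\uhr n}$ arrays (whose windows \emph{are} by construction consistent with type $(\rho\uhr n)\concat 0$), or (b) do what the paper does: take $x_n$ of type $(\rho\uhr n)\concat 0$ without prescribing the limit, extract a convergent subsequence by compactness, and argue that any limit point is $\rho\uhr k$-regular for every $k$ and hence has type $\rho$, landing it in $[y]_F$. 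As written, your argument does not go through in the infinite-path case. (A second, minor, quibble: your step 2 should also note that when $\max I_x$ exists and $I_y$ has no minimum, the realized endpoint is necessarily of the form $\rho\in[T]$, approached from above by types $(\rho\uhr n)\concat\infty$; the $\rho\concat\infty$ shape only occurs as a limit from below.)
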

\begin{proof}
We may assume $[x]_F < [y]_F$.  

{\bf Case 1.}  Suppose there is a longest $\rho \in T$ such that there exist 
$x' \in [x]_F$ and $y' \in [y]_F$ with
 $\rho \prec \type_T(x')$, and $\rho \prec \type_T(y')$.  Let $t \in \Omega\cup\omega$ 
 be largest such that for some $x' \in [x]_F$, we have $\rho\concat t \prec \type_T(x'),$ 
 if such $t$ exists.  
 
 If no such $t$ exists, then $[x]_F$ has elements of type 
 $\rho\concat n$ for arbitrarily large $n \in \omega$ and $[y]_F$ has all the $a$-aligned elements 
 of type $\rho\concat \infty$ (since $[y]_F$ does have something whose type starts 
 with $\rho$, and $\infty$ is all that is left).  For each sufficiently large $n$, let $x_n \in [x]_T$ 
 be a configuration of type $\rho\concat n$.  Every limit point $y'$ of this sequence 
 has type $\rho\concat \infty$.  So $(x,y')$ is in the closure of $F$ via a subsequence 
 of $(x,x_n)_{n\in\omega}$.
 
 If $t$ exists, then $[x]_F$ contains a $\lex$-greatest type, which is $\rho\concat t$ if 
 $\rho\concat t \not\in T$, and which is $\rho\concat t\concat \infty$ if $\rho\concat t \in t$. 
 Letting $s$ be the successor of $t \in \Omega \cup \omega$,
  $[y]_F$ contains a $\lex$-least type, which is $\rho\concat s$ if $\rho\concat s \not\in T$, 
  and $\rho\concat s \concat 0$ if $\rho\concat s \in T$.  But that
  implies that some $x' \in [x]_F$ and $y' \in [y]_F$ are a $\lex$-successor pair,
  contradicting that $H_1^a \subseteq F$.
 
 {\bf Case 2.} There is some infinite $\rho \in [T]$ such that for each $\tau \prec \rho$, 
 there are $x' \in [x]_F$ and $y' \in [y]_F$ which are $\tau$-aligned.  The elements 
  of type $\rho$ are either in $[x]_T$ or in $[y]_T$.  If they are in $[x]_T$ then 
  for all sufficiently large $n$, there is $y_n \in [y]_T$ with 
  $\type_T(y_n) = (\rho\uhr n)\concat \infty$.  Every limit point $x'$ of this sequence 
  has type $\rho$, because the larger and larger alphabets $\{A_{\rho \uhr n}, B_{\rho\uhr n}\}$
  are adopted cofinally in this sequence.  Similarly, if the type $\rho$ elements 
  are in $[y]_T$, we can define $x_n$ to be a configuration of type $(\rho\uhr n)\concat 0$,
  and any limit point $y'$ has type $\rho$ for the same reason.
 \end{proof}

\begin{proof}[Proof of Theorem \ref{thm:tcpe-hausdorff}]
By Lemma \ref{lem:ep-slowerthan-h}, we know that for all
$\beta \geq 1$, we have $H_\beta \supseteq E_{2\beta}$.
This shows that if $L_T$ has Hausdorff rank $\beta$, then 
the TCPE rank of $X_T$ is at least $2\beta - 1$, 
and that if $L_T$ is not scattered, then $X_T$ does not have TCPE.

By Lemma \ref{lem:ep-slowerthan-h}, we also know that for all
$\beta \geq 1$ and any alignment family $a$, we have $H_\beta^a \supseteq E_{2\beta}^a$.
In fact, these two sets are equal, which we show by induction. 
The base case is above and the limit case is clear.  Assuming that $H_\beta^a = E_{2\beta}^a$, 
by Lemma \ref{lem:connectedness}, we see that each successor pair 
of equivalence classes of $H_\beta^a$ gets a connection between at least 
one pair of representatives in the topological closure of $H_\beta^a$.  
Therefore, taking symmetric and transitive closure, we find that 
$H_{\beta+1}^a \subseteq E_{2\beta+2}^a$.

It now follows that if $H_\beta = L_T^2$, 
then $E_{2\beta}^a = (X_T^a)^2$ for all $a$.
The configuration $1^{\mathbb Z^2}$ is an element of every $X_T^a$,
and $E_{2\beta}$ is an equivalence relation.  Therefore, 
$E_{2\beta} = X_T^2$.  
\end{proof}

\subsection{Enforcing the restrictions with sofic computation}

In this section we show that if $T \in (2\Omega)^{<\omega}$ is 
a computable tree, then $X_T$ is sofic, via an SFT extension 
which has the same generalized transitivity rank as $X_T$.

We have to make a couple of modifications to the 
algorithm developed in Sections \ref{sec:drs-basic} and 
\ref{sec:drs-trap}.  Here are the major updates required:
\begin{enumerate}
\item The parameter tape of each macrotile should now keep track 
of the largest $\rho\in T$ for which the underlying configuration
appears to be $\rho$-regular, and the corresponding 
$\rho$-alignment relative to the macrotile, and check that 
other neighbors agree about this $\rho$ and this alignment.
Then the construction can proceed as in Section \ref{sec:rank3},
keeping track of sizes of squares on the alphabet $\{A_\rho,B_\rho\}$.
\item However, we cannot simply
superimpose symbols of $\Lambda$ onto occurrences of
 $A_\rho$ and $B_\rho$ for all $\rho$ (since these patterns 
 are arbitrarily large, it does not even make sense).  
 Instead we simulate the square-enforcing 
 function of the alphabet $\Lambda$ by 
 having each macrotile who thinks she is inside an $A_\rho$ 
 pattern guess about which symbol of $\Lambda$ 
 should be superimposed.  The friendly neighbor 
 communications can be slightly expanded to make sure everyone 
 inside of a given $A_\rho$ agrees about the symbol, and to 
 make sure all the $2\times 2$ restrictions of $F_\rho$ 
 are satisfied.
  \end{enumerate}

Of course, point (1) above is a slight lie.  If the alignment $\rho$ 
of a particular configuration is very long, or contains very large 
numbers, then it is likely that small macrotiles will not have a 
tape long enough to record such $\rho$.  Instead, they 
should record an initial segment of $\rho$ that is long enough, 
and leave it to their parent, grand-parent, and so on to 
lengthen the alignment as warranted.

Another caveat about point (1) is that we want to make 
sure that entropy pair relations of $X_T$ are preserved
as transitivity pair relations in the SFT that is currently 
under construction.  So for example, if $T = \{\lambda, (1,2)\}$,
we do want there to be a transitivity pair relationship 
between an element $x$ of type 1 and the special element $y$ of 
type $(1,2)\concat 0$ which consists of a regular grid 
of the macrosymbol $A_{(1,2)}$.  The macrotiles superimposed 
on $x$ will be aware that $x$ contains only $1$-squares 
but is not $(1,2)$-aligned, while 
the macrotiles superimposed on $y$ will be aware that 
$y$ is $(1,2)$-aligned but does not contain any $B_{(1,2)}$.  
It should be consistent 
for both kinds of macrotile to exist in a single configuration 
which combines large patterns from $x$ and $y$.  We 
can achieve this if we stick to the convention that a macrotile 
keeps track of what is happening in its locality only.  
If two neighbor macrotiles have observed different but 
consistent things in their individual locality, both sets of 
observations 
will be assimilated by the parents of these macrotiles, 
in the same way that a parent who has one child seeing 
only $n$-squares and another seeing only $(n+1)$-squares
can record both of those facts.  In our example with 
$T= \{\lambda, (1,2)\}$, a parent macrotile whose 
locality contains both large patterns from $x$ and 
large patterns from $y$ would have recorded that 
its locality is $\lambda$-regular, that there are 1-squares 
in that locality, and that the locality is neither 
$(0,1)$-regular nor $(1,2)$-regular (the former witnessed 
by a child in the $y$ part and the latter witnessed 
by a child in the $x$ part).

In the above paragraph we have left the notion of locality 
deliberately vague.  That is because the macrotiles 
and the macrosymbols are unlikely to line up exactly, 
and so if a macrotile is to know about the macrosymbols
which are in its vicinity, 
it will necessarily know about things beyond 
the boundary of the usual responsibility zone.  This will 
not cause a problem, nor will it be necessary to 
give a precise definition to the term ``locality''.
However, it will be necessary to be precise about 
how much of $\rho$ a given macrotile should 
record (in the case where the type of the 
configuration is, or locally appears to be, 
much longer than what could be written on the 
macrotile's tape).

Clearly, a macrotile should know the largest $\rho$ such that 
the pattern in its usual 
responsibility zone is $\rho$-aligned.  Imagine some 
parent macrotile which knows this information.
If $A_\rho$ and $B_\rho$ 
are much smaller than this parent macrotile, then the parent 
cannot do
the work of figuring out 
whether the configuration is so far satisfying $F_\rho$, 
 because the parent cannot know which macrosymbol appears at each location 
(too much information compared to the tape size of the parent).
Therefore, we need the children to primarily do this work
(although if there are any remaining partial corners or sides made 
out of macrosymbols $\{A_\rho,B_\rho\}$,
we want the children to report this 
to the parent).  Assuming the children have success 
at enforcing $F_\rho$, then the parent macrotile and 
its neighbors can use the passed-up information to 
proceed exactly as in Section \ref{sec:drs-trap}, either 
recording sizes reported by children, or passing messages to 
figure out what kind of larger squares are being made with
alphabet $\{A_\rho, B_\rho\}$. 

How will the macrotile's children figure out whether the 
configuration is so far satisfying $F_\rho$?  They need to 
make a guess, for each appearance of the macrosymbol 
$A_\rho$ or $B_\rho$, 
which symbol of $\Lambda$ (the auxiliary alphabet 
used in Section \ref{sec:drs-basic}) should be superimposed
upon each macrosymbol.  Then, by looking at $2\times 2$ 
blocks of macrosymbols, they should forbid any combinations 
where the superimposed $\Lambda$ symbols are 
inappropriately placed.  An issue seems to arise: what if the 
macrosymbols are very large compared to the children?  
(If they are small, we can punt the problem to the grandchildren, 
so let us assume they are larger than the children.)
How could the children be expected to know that it is time 
for them to guess a symbol of $\Lambda$, and 
where one macrosymbol from $\{A_\rho,B_\rho\}$ ends 
and the next begins?
This information cannot actually be deduced by looking 
at the part of the configuration which is in the responsibility 
zone of the child.  The children have to guess: what is $\rho$, 
how is the alphabet $\{A_\rho,B_\rho\}$ aligned, which of these 
two macrosymbols am I contained in?  Only then can the child 
also guess about the symbol from $\Lambda$.
Thus we see that the desired connection between information 
at the child level and parent level will be possible only if the child 
correctly guesses $\rho$ and its alignment.  This means in 
general that a macrotile needs to know the longest $\rho$ 
such that the pattern in the responsibility zone of its \emph{parent}
appears to be $\rho$-aligned.  Knowing this is sufficient as well.

Now we describe in more detail the algorithm that will be run on 
the macrotiles.  But we do not give all details, trusting that the reader 
who has made it this far would be more hindered than helped by 
an overabundance of technical elaboration.   

We first give a summary 
of the algorithm to be performed on all macrotiles, followed by remarks 
which give slightly more details about how to achieve any step
for which there might be a question.  
Let $Y_T$ be the SFT defined by superimposing the following 
computation onto $Y_1$ (where $Y_1$ is the shift on alphabet $\Lambda$ 
described in Section \ref{sec:rank3}.)

On input $p,c$:
\begin{enumerate}
\item Data format check.
\begin{enumerate}
\item The parameter tape contains
\begin{itemize}
\item A level number $i$ and starting number $i_0$
\item A string $\rho \in\omega^{<\omega}$ for which the size of $A_\rho$ is less than $L_{i+1}$
\item Deep coordinates, relative to the parent, indicating where a single macrosymbol from $\{A_\rho,B_\rho\}$
has a lower left corner.
\item A couple bits to indicate which of $A_\rho$ and $B_\rho$ have been sighted, and if so,
example(s) of where.
\item An assertion of whether $F_\rho$ has been followed or not (and if not, where the violation occurred).
\item Up to two sizes (of squares observed in alphabet $\{A_\rho,B_\rho\}$).
\item If any sizes appear above, deep coordinates for some corners of squares of those sizes
(witnessing that they exist and that they have no regularity)
\item Up to four deep coordinates for corners or sides of partial squares on alphabet $\{A_\rho,B_\rho\}$,
\item If the size of $A_\rho$ is larger than $L_i$, 
up to four symbols of $\Lambda$, our guesses for what is superimposed on up to four 
$A_{\rho}$ or $B_\rho$ patterns in which we may be participating.
\end{itemize}
\item The colors contain
\begin{itemize}
\item Location, machine and wire bits
\item A copy of the parent's parameter tape
\item Side message-passing bits
\item Friendly neighbor parameter display bits
\item Diagonal neighbor parameter display bits
\item Trap neighbor message-passing bits
\end{itemize}
\end{enumerate}
\item Expanding tileset construction, including the ``trap zone'' feature discussed in Section \ref{sec:drs-trap}.
\item Size checking.  If the parent asserts anything that relates to me 
(for example, that I contain some $n$-square on some alphabet with a 
corner at such-and-such location), make sure 
that assertion is accurate.  
Let $\nu$ denote the string recorded by my parent (i.e. $\nu$ is my parent's $\rho$).
If $\rho$ is a proper initial segment of $\nu$, and 
the size of $A_\nu$ is larger than $L_{i+1}$, my
parent's parameter tape uniquely determines my entire configuration,
so no further checks are necessary.  If $\rho$ is a proper 
initial segment of $\nu$ and the size of $A_\nu$ is smaller 
than $L_{i+1}$, then I should have said $\nu$ instead of $\rho$; if that 
happens, kill the tiling.
If $\nu \preceq \rho$, that means that $\nu$ is longest such that all my 
siblings are $\nu$-aligned.  If the parent indicates that $F_\nu$ is violated, 
no further checks are necessary.  If the parent indicates that 
$F_\nu$ has been followed, we split into two cases.
\begin{enumerate}
\item Case 1: If the size of $A_\nu$ 
is greater than or equal to the size of me ($L_i$), then I intersect up to four 
symbols from alphabet $\{A_\nu,B_\nu\}$.  For each such macrosymbol, I have 
guessed a symbol from $\Lambda$ to superimpose.  (If $\nu = \rho$, I guessed 
this directly, if $\nu \prec \rho$, my guess for alphabet $\{A_\rho,B_\rho\}$ 
uniquely determines a guess for alphabet $\{A_\nu,B_\nu\}$.)
If I have guessed a border-corner 
symbol of $\Lambda$, and if I contain the outermost pixel 
associated to that corner of 
that $A_\nu$, I 
must send and receive messages for that corner, compute the size of a 
square on alphabet $\{A_\nu,B_\nu\}$ when I receive matching messages, 
and require my parent to record either the size or the partial corner (as appropriate),
with full details just as in Section \ref{sec:drs-basic}.
\item Case 2:  If the size of $A_\nu$ 
is less than $L_i$, I can assume that my children have already 
parsed the $\{A_\nu,B_\nu\}$ macrosymbols and informed me of any partial 
corners or sides on that alphabet which I may have.  
(If $\nu\prec\rho$, the information on my tape allows me to uniquely 
determine what partial corners or sides on alphabet $\{A_\nu,B_\nu\}$ are 
located in my vicinity, and also allows me to determine if any complete 
$n$-squares on that alphabet are in my vicinity.) Proceed as in Section \ref{sec:drs-basic}.
\end{enumerate}
\item Friendly and diagonal neighbor steps.  Using my own parameter tape plus those of eight neighbors,
\begin{itemize}
\item Certify that my parent's claims are consistent with what is on all my neighbors' parameter tapes
\item Let $\nu$ be longest such that all eight neighbors are $\nu$-aligned. If 
the size of $A_\nu$ is greater than or equal to $L_i$, 
collect all the neighbors' guesses about what symbols of $\Lambda$ to superimpose 
on their macrosymbols $A_\nu$ and $B_\nu$  (in some cases this may have to be inferred 
from their guesses about larger macrosymbols).  If I and a neighbor both intersect 
the same macrosymbol, make sure our guesses are the same (if they are not the same,
kill the tiling).  If the $3 \times 3$ 
block which I am viewing contains a 4-way 
boundary of macrotiles on alphabet $\{A_\nu,B_\nu\}$, check that the $2\times 2$ 
restrictions on $\Lambda$ are satisfied.  If they are not satisfied do not kill 
the tiling, but do make 
sure the parent records that $F_\nu$ has been violated.
\item If the size of $A_\nu$ is less than $L_i$, we can assume our children 
have already checked for $F_\nu$ compliance.  If our tape says that $F_\nu$ 
was violated, nothing more to do.  If our tape says that $F_\nu$ was followed, 
use the parameter tapes of non-sibling neighbors to find any sizes 
of squares on alphabet $\{A_\nu,B_\nu\}$ that straddle the parent 
boundary, and report these to the parent just as in Section \ref{sec:drs-basic}.
\end{itemize}
\item\label{step:tn} Trap neighbors: Read the parameter tapes of the trapped tiles, 
communicate what is on those tapes to all 12 trap neighbors, and reproduce 
all the missing functions (message-passing and $F_\nu$ compliance checking) 
which the trapped tiles should have performed.
\item Compute facts about $T$.  Halt if you ever see either of these situations:
\begin{itemize}
\item If $\rho \in T$, but $F_\rho$ has not been followed, or
\item If $n$- and $(n+1)$-squares on alphabet $\{A_\rho,B_\rho\}$ 
have been reported, and $\rho\concat(n,n+1) \in T$, 
but $\rho\concat(n,n+1)$-regularity has been shown to fail.
\end{itemize}
\end{enumerate}

All of the objects being computed on have size at most $\log(L_{i+2})$, 
and the operations are all polynomial time operations except for the 
final step of computing facts about $T$.  Therefore, all but the last 
step of the algorithm takes $\poly(\log L_{i+2})$ time, which
is asymptotically much less than $N_i/2$.  It follows 
that we can define the tileset to start from an $i_0$ large enough 
that each macrotile will finish steps (1)-(5) within half of its available time,
leaving the other half for $T$ computation.
A sufficiently large macrotile superimposed on a configuration of 
impermissible type will be large enough to have learned the type 
and large enough to compute the $T$-facts which witness 
that the type is impermissible.  At that point, the impermissible 
type will be forbidden.  Therefore, $X_T$ is a factor of $Y_T$.

Let $T \in (2\Omega)^{<\omega}$ be a computable tree and let $Y_T$ 
be defined as above.  Let $h:Y_T\rightarrow X_T$ be the obvious 
factor map.  Then we have the following lemmas.

\begin{lemma}
$Y_T$ has a fully supported measure.
\end{lemma}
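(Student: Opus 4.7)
The plan is to mimic the proof of Lemma \ref{lem:fsm}, showing that every pattern $w$ appearing in some $z \in Y_T$ also appears with positive density in some $z' \in Y_T$; a fully supported shift-invariant measure then arises by the standard averaging argument. Reducing as in the earlier lemma, I may assume $w$ consists of a $2 \times 2$ block of level-$j_0$ macrotiles for some $j_0$. Because $w$ is finite, the tapes of its four macrotiles collectively witness only finitely much data: a bounded initial segment $\sigma \in T$ of the ambient type, together with a bounded set of partial-square data on alphabet $\{A_\sigma,B_\sigma\}$. I would pick $\sigma$ to be the longest string on which all four tapes agree, and then select $n_0$ large enough that $w$ is compatible with the target type $\sigma \concat (n_0,n_0+1) \in T^+$ (treating any infinite partial squares visible in $w$ as large finite ones, as in Lemma \ref{lem:fsm}), and also large enough that no complete $n_0$-square on alphabet $\{A_\sigma,B_\sigma\}$ is forced to appear within the combined responsibility zones of the tiles of $w$.

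Next I would build $y' \in Y_1$ realizing this target type in the same fabric-and-trap pattern used previously. Fix a non-exceptional tiling structure; produce a $\sigma$-aligned background consisting of $n_0$- and $(n_0+1)$-squares on alphabet $\{A_\sigma,B_\sigma\}$ arranged in a regular grid with blank $M\times M$ windows, where $M$ is large enough to comfortably host a fairly central trap zone big enough to contain the $Y_1$-projection of $w$ together with the $N$-pixel buffer guaranteed by Lemma \ref{lem:extension}. In each window, drop $h(w')$ (where $w'$ is the restriction of $z$ to the combined responsibility zones of $w$) into a fairly central trap zone, and use Lemma \ref{lem:extension} to fill the remainder of the window with patterns of type $\sigma \concat (n_0,n_0+1)$.

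The heart of the argument is the inductive construction of a companion $x' \in T$ that is self-accurate at every pixel of $y'$ and agrees with $w$ inside each fairly central trap zone. The induction is identical in structure to Lemma \ref{lem:fsm}: ordinary children of a macrotile are produced by Lemma \ref{lem:down-accurate}, trapped children are copied directly from $w$, and trap-neighbor children are assigned colors accurate for $y'$ on their non-trap sides, together with information-circle bits (step (\ref{step:tn}) of the algorithm) that accurately report whatever the trapped tiles display. The main obstacle is conceptual: the macrotiles inside $w$ were originally computing against the ambient type $\type_T(h(z))$ occurring in $z$, which need not agree with the type $\sigma \concat (n_0,n_0+1)$ of the new ambient $y'$. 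I expect no harm to result, because the trap zone disables all external consistency for trapped tiles, each trapped macrotile remains internally self-consistent (its algorithm already ran forever inside $z$), and the surrounding trap-neighbors report outward only the accurate observed data. The choice of $n_0$ large enough to exclude complete $n_0$-squares within $w$'s territory is exactly what prevents any size conflict between the internal self-knowledge of $w$ and the externally-visible type of $y'$.

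Finally, compactness yields an infinite $x' \in T$ with the required properties, so that $z' = (x',y') \in Y_T$ contains $w$ inside every fairly central trap zone, which is a positive-density family of locations. This completes the construction of $z'$ for an arbitrary $w$, and hence produces a fully supported measure on $Y_T$.
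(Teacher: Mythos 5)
Your overall strategy---reduce to the argument of Lemma \ref{lem:fsm} by placing $h(w')$ in fairly central trap zones of a background of a suitable finite type on the macroalphabet $\{A_\sigma,B_\sigma\}$---is the same as the paper's, and the inductive construction of the companion tiling (ordinary children via Lemma \ref{lem:down-accurate}, trapped children copied from $w$, trap neighbors reporting accurately) is handled correctly. But you have skipped the one genuinely new difficulty of this lemma relative to Lemma \ref{lem:fsm}, and as written the step ``in each window, drop $h(w')$ into a fairly central trap zone'' fails. The positions of the trap zones are tied to the macrotile hierarchy, whose periods are powers of $2$ times the base size, while the macrosymbol grid $\{A_\sigma,B_\sigma\}$ has pixel period $j_\sigma=\prod_j(n_j+3)$, which is odd because every $n_j$ is even; the two periods are relatively prime. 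Once you fix the tiling structure and place one copy of $h(w')$, the $\sigma$-alignment of the entire configuration is determined, and the offset of the macrosymbol grid relative to a trap zone then varies from one trap zone to the next, cycling through all possibilities. The pattern $w$ carries, both in its $Y_1$-part and on its parameter tapes (which record deep coordinates of a macrosymbol corner), a commitment to one particular offset, so most trap zones cannot host a copy of $h(w')$ consistently with the ambient $\sigma$-regular background. The repair is exactly the relative-primality observation: there are only finitely many possible offsets, each occurring periodically and hence with positive density, so one enlarges $M$ so that every $M\times M$ window contains a fairly central trap zone with the \emph{same} macrosymbol offset as in $w$, and places $h(w')$ only in those. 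Since this is essentially the entire added content of the lemma over Lemma \ref{lem:fsm}, it must be addressed.

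A secondary issue is your choice of target type. Taking $\sigma\concat(n_0,n_0+1)$ with $n_0$ large is not always possible: if the four parameter tapes of $w$ carry $\Lambda$-guesses which together assert a complete $0$-square on $\{A_\sigma,B_\sigma\}$ (four tightly packed $A_\sigma$'s closing up), then $w$ is compatible only with type $\sigma\concat 0$, and the trap zone does not hide this because the trap neighbors must relay the trapped tiles' recorded sizes to the parent, where size $0$ and size $n_0$ would conflict. You need the paper's case split: take $t=0$ in that exceptional case and a large finite $t$ otherwise (and in either case check $\sigma\concat t\notin T$, which for the chimera choice uses $T\subseteq(2\Omega)^{<\omega}$).
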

\begin{proof}
Let $w$ be any pattern that appears in an element $y$ of $Y_T$.  Without 
loss of generality, we can assume that $w$ consists of a $2\times 2$
array of macrotiles.  Let $\rho = \type_T(y)$.  We pick an alphabet 
$\{A_\sigma,B_\sigma\}$ and a $t\in\omega \cup \Omega$ as follows.  The goal 
is to end up with a finite type $\sigma\concat t$ 
that is consistent with $h(w)$.
If $\rho$ is finite, 
let $\sigma$ be all but the last symbol of $\rho$;
then $\rho = \sigma \concat t$ for some $t \in \omega \cup \Omega \cup \{\infty\}$.
If $t$ is finite, we are done.  If $t = \infty$, let $t\in \omega$ be a number large 
enough that it is consistent that $h(w)$ has type $t$ 
relative to alphabet $\{A_\sigma, B_\sigma\}$.  If $\rho$ is infinite, 
then let $\sigma \prec \rho$ be long enough that $w$ contains parts 
of no more than four macrosymbols on alphabet $\{A_\sigma,B_\sigma\}$.
The parameter tapes in $w$ have made $\Lambda$-guesses for these symbols.
One possibility is that all four symbols are $A_\sigma$ and the 
$\Lambda$-guesses imply that these four symbols form a 0-square;
 in that case, let $t = 0$.  If this possibility does not occur, then the 
 $\Lambda$-guesses are consistent with any finite $t$ that is sufficiently 
 large, so we may set $t= n$ for some large $n$.
Observe that in all cases we have arrived 
at a finite type $\sigma\concat t\not\in T$ 
which is consistent with $h(w)$ and with the information 
written on the four parameter tapes of $w$.

Relative to the alphabet $\{A_\sigma,B_\sigma\}$,
we can now proceed almost exactly as in Lemma \ref{lem:fsm}.  
The only additional detail to consider is the way in which 
the macrosymbol grid intersects $w$.  If we fix a tiling structure 
and then place a copy of $h(w)$ in one of the trap zones
at the appropriate level, this determines where the boundaries 
of the macrosymbols must lie throughout the entire configuration.
The trap zone of a different, arbitrarily chosen 
macrotile may not be able to accommodate a copy of $h(w)$ 
because the macrosymbol boundaries 
may intersect each trap zone in a different 
way.  However, there are only finitely many ways that the macrosymbol 
boundaries can intersect the trap zones, and each way occurs 
with positive density (in fact periodically).  Therefore, the argument of 
Lemma \ref{lem:fsm} can be repeated if we just choose $M$ large 
enough to ensure not only a sufficiently-central trap zone, 
but a sufficiently-central one with the right macrosymbol boundaries.
\end{proof}

\begin{lemma} If $y_n \rightarrow y \in Y_T$ in its topology, 
and $\rho_1 <_\tlex \type_T(h(y)) <_\tlex \rho_2$
for some types $\rho_1$ and $\rho_2$, then for sufficiently large $n$, we have 
$\rho_1 \lex \type_T(h(y_n)) \lex \rho_2$.
\end{lemma}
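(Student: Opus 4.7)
The plan is a one-line reduction to Lemma \ref{lem:limit} via the factor map $h\colon Y_T \to X_T$ introduced just before the statement. Factor maps between subshifts are by definition continuous, so $y_n \to y$ in the topology of $Y_T$ immediately gives $h(y_n) \to h(y)$ in the topology of $X_T$. The hypothesis $\rho_1 <_\tlex \type_T(h(y)) <_\tlex \rho_2$ is exactly the hypothesis of Lemma \ref{lem:limit} applied to the $X_T$-convergent sequence $x_n := h(y_n)$ with limit $x := h(y)$, and that lemma's conclusion is precisely the desired $\rho_1 \lex \type_T(h(y_n)) \lex \rho_2$ for sufficiently large $n$.

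I anticipate no substantive obstacle; all of the real work has already been done in the earlier Lemma \ref{lem:limit}, whose case analysis on the structure of the longest common ancestor $\rho \in T$ of $\type_T(x)$ and each $\rho_i$ handled the subtle interplay between type bounds and topological convergence in $X_T$. The only supporting fact needed here is that $h$ really is a continuous surjection $Y_T \twoheadrightarrow X_T$, which is part of the construction of $Y_T$ in the preceding subsection: configurations of $Y_T$ project in a block-code fashion onto configurations of $X_T$ (reading off the $\Lambda$-part through the analog of $h_2$ and ignoring the superimposed computation), and the soficity argument established surjectivity. Since the statement is phrased purely through $h$ and the types on $X_T$, the computation layer of $Y_T$ contributes nothing new to the argument; the lemma is stated here only so that the convergence behavior used in the generalized-transitivity rank analysis can be invoked directly at the SFT level.
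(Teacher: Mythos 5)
Your proposal is correct and matches the paper's argument exactly: the paper's proof is simply ``Follows immediately from Lemma \ref{lem:limit},'' and your spelled-out justification (continuity of the factor map $h$ turns convergence in $Y_T$ into convergence in $X_T$, after which Lemma \ref{lem:limit} applies verbatim) is precisely the intended reduction.
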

\begin{proof} Follows immediately from Lemma \ref{lem:limit}.\end{proof}

\begin{lemma}
For $y_1,y_2 \in Y_T$, if there are infinitely many types between 
$\type_T(h(y_1))$ and $\type_T(h(y_2))$, then $y_1$ and $y_2$ 
are not a transitivity pair.
\end{lemma}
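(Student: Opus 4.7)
The plan is to show that transitivity pairs in $Y_T$ descend, via the factor map $h$, to transitivity pairs in $X_T$; then a direct invocation of Lemma \ref{lem:type-separation} finishes the proof by contradiction. So I suppose for contradiction that $(y_1,y_2)$ is a transitivity pair in $Y_T$ but there are infinitely many types between $\type_T(h(y_1))$ and $\type_T(h(y_2))$. If I can establish that $(h(y_1),h(y_2))$ is a transitivity pair in $X_T$, then Lemma \ref{lem:type-separation} applied to this pair is the desired contradiction.

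To verify the descent, I would use that $h$ is defined cell-by-cell: each symbol of $Y_T$ is a pair consisting of a superimposed tile together with a symbol of $\Lambda$, and $h$ simply reads off the $\Lambda$-component and then maps boundary symbols to $0$ and the background to $1$. In particular $h$ is a $1$-block code, so any pattern $v$ of shape $S$ appearing in $h(y_1)$ at position $g$ lifts to a pattern $w := y_1\uhr (g^{-1}S)$ in $y_1$ with $h(w)=v$, and similarly any pattern $v'$ appearing in $h(y_2)$ lifts to $w'$ in $y_2$. By the transitivity pair hypothesis, there is some $z \in Y_T$ in which both $w$ and $w'$ appear; applying $h$ and using shift-equivariance, both $v$ and $v'$ appear in $h(z) \in X_T$. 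Since $v$ and $v'$ were arbitrary, $(h(y_1),h(y_2))$ is a transitivity pair in $X_T$.

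There is no real obstacle here: the whole content is the (routine) fact that factor maps push transitivity pairs forward, combined with the already-proved type-separation statement for $X_T$ itself. If anything the only point to watch is that $h$ is genuinely a $1$-block code so that pattern lifts have the same shape as the patterns they cover, but this is immediate from how $Y_T$ was built on top of $Y_1$.
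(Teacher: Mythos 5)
Your proof is correct and matches the paper's approach: the paper's own proof is just ``Follows immediately from Lemma \ref{lem:type-separation},'' and the content you spell out---that the $1$-block factor map $h$ pushes transitivity pairs of $Y_T$ forward to transitivity pairs of $X_T$, so the conclusion reduces to the type-separation lemma already proved for $X_T$---is exactly what that ``immediately'' is suppressing.
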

\begin{proof}
Follows immediately from Lemma \ref{lem:type-separation}.
\end{proof}

Let $H_\alpha \subseteq Y_T^2$ denote the pull-back of the Hausdorff 
equivalence relations on $L_T$.  Recall that $T_\beta \subseteq Y_T^2$
refer to the relations in the generalized transitivity hierarchy 
of $Y_T$ (Definition \ref{def:gtr}).  

\begin{lemma}
For all $\beta \geq 1$, we have $T_{2\beta} \subseteq H_\beta$.
\end{lemma}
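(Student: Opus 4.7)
The plan is to mimic the proof of Lemma \ref{lem:ep-slowerthan-h}, which established the analogous containment $E_{2\beta}\subseteq H_\beta$ for $X_T$. The structure is a transfinite induction on $\beta$, with all of the needed ingredients for $Y_T$ already supplied by the three lemmas immediately preceding this one. In particular, $H_\alpha \subseteq Y_T^2$ is by definition the pull-back along $h$ of the Hausdorff equivalence relation $L_T^\alpha$, so each $H_\alpha$ is automatically an equivalence relation on $Y_T$ and is \emph{interval-like} in the sense that if $y_1\lex y_2\lex y_3$ and $(y_1,y_3)\in H_\alpha$ then $(y_1,y_2),(y_2,y_3)\in H_\alpha$. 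I will use these two features repeatedly.

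For the base case $\beta=1$, the preceding lemma states that any pair whose types are separated by infinitely many lex-intermediate types fails to be a transitivity pair, which is exactly the statement $T_1\subseteq H_1$. Since $T_1$ is closed, $T_2$ is the symmetric/transitive closure of $T_1$; and since $H_1$ is already an equivalence relation, this closure still lives inside $H_1$. The limit case is immediate from the fact that both hierarchies are defined by unions at limits.

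The successor step is where the work is, and it is a direct translation of the argument in Lemma \ref{lem:ep-slowerthan-h}. Assume $T_{2\beta}\subseteq H_\beta$. Two closure operations take us from $T_{2\beta}$ to $T_{2\beta+2}$, so it suffices to check that the same two closures applied to $H_\beta$ stay inside $H_{\beta+1}$. First let $H_\beta'$ be the topological closure of $H_\beta$. If $(y,y')\in H_\beta'$ with $[h(y)]_\beta<[h(y')]_\beta$, pick $(y_n,y_n')\to(y,y')$ in $H_\beta$ and suppose, for contradiction, there are two intermediate Hausdorff classes $[h(z)]_\beta<[h(w)]_\beta$ between them. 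Then $h(y)<_{\tlex}h(z)$ and $h(w)<_{\tlex}h(y')$, so by the convergence lemma proved just above, eventually $y_n\lex z$ and $w\lex y_n'$; this contradicts interval-likeness of $H_\beta$ applied to $(y_n,y_n')$. Hence $H_\beta'$ still has no more than one $H_\beta$-class gap between any of its pairs. Now let $H_\beta''$ be the symmetric/transitive closure of $H_\beta'$; any pair in $H_\beta''$ is witnessed by a finite chain whose consecutive links each span at most one $H_\beta$-class gap, so the two endpoints are separated by finitely many $H_\beta$-classes and therefore lie in the same $H_{\beta+1}$-class. This gives $T_{2\beta+2}\subseteq H_\beta''\subseteq H_{\beta+1}$, completing the induction.

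Since every step of this argument has already been carried out in the $X_T$ setting and the only things used about $Y_T$ beyond what was used for $X_T$ are the three immediately preceding lemmas (which are the verbatim $Y_T$-analogs of Lemma \ref{lem:limit} and Lemma \ref{lem:type-separation}), there is no real obstacle; the proof is essentially bookkeeping. The one place that requires a moment of care is the observation that interval-likeness of $H_\beta$ lifts trivially from $L_T$ to $Y_T$ along the factor map $h$, which is what lets the convergence-lemma argument in the successor step go through unchanged.
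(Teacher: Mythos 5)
Your proof is correct and takes essentially the same route as the paper, which simply declares this lemma's proof ``identical to the proof of Lemma \ref{lem:ep-slowerthan-h}''; you have just written out the translation explicitly, using the two preceding lemmas (the $Y_T$-analogs of Lemmas \ref{lem:limit} and \ref{lem:type-separation}) exactly where the paper intends them to be used. The observations you flag --- that $H_\beta$ on $Y_T$ is interval-like because it is the pull-back of the convex $L_T^\beta$-classes along $h$, and that the base case follows from the transitivity-pair separation lemma --- are precisely the bookkeeping the paper leaves implicit.
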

\begin{proof}
Identical to the proof of Lemma \ref{lem:ep-slowerthan-h}.
\end{proof}

\begin{lemma}
For $y_1,y_2 \in Y_T$, let 
$\rho_1 = \type_T(h(y_1))$ and $\rho_2 = \type_T(h(y_2))$.
Then
\begin{enumerate}
\item If $\rho_1 = \rho_2$, then $y_1$ and $y_2$ are a transitivity 
pair.  
\item If there are no types strictly between $\rho_1$ 
and $\rho_2$, then there exists a transitivity pair $(y_1',y_2')$
such that $\type_T(h(y_1')) = \rho_1$ and $\type_T(h(y_2')) = \rho_2$.
\end{enumerate}
\end{lemma}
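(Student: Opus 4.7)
The plan is to prove this as the SFT analog of Lemma \ref{lem:ep-successors}, following the template set by Lemma \ref{lem:gtr3} which did the analogous job for the special tree $T=\{\lambda\}$. The key tool is the trap-zone construction: given finite patterns $w_1 \prec y_1'$ and $w_2 \prec y_2'$, I will build a configuration $y \in Y_T$ in which both patterns appear with positive density by placing independent copies of $w_1$ and $w_2$ in the trap zones of an appropriately chosen ambient tiling. Without loss of generality, assume each $w_i$ is a $2\times 2$ block of level-$j_0$ macrotiles for some common $j_0$.

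For part (1), set $y_1'=y_1$ and $y_2'=y_2$. Since $\type_T(h(y_1))=\type_T(h(y_2))$, one can choose a finite ambient type $\sigma \concat t \notin T$ compatible with both $h(w_1)$ and $h(w_2)$: when the common type is $\rho\concat\infty$ or is some infinite $\rho\in[T]$, take $\sigma\prec\rho$ sufficiently long and $t\in\omega$ sufficiently large that $\sigma\concat t$ is consistent with the $\Lambda$-guesses recorded on the four parameter tapes appearing in each $w_i$. Then repeat the construction from the proof of Lemma \ref{lem:fsm}, now with alphabet $\{A_\sigma, B_\sigma\}$: fix a non-exceptional tiling structure at a level whose trap zones are large enough to accommodate $w_i$ with correctly lined-up macrosymbol boundaries, use Lemma \ref{lem:extension} to build an underlying $Y_1$-configuration of type $\sigma\concat t$ that places either $h(w_1)$ or $h(w_2)$ (free choice) in each fairly central trap zone whose macrosymbol boundaries match, and step up an accurate $T$-tiling on top of it exactly as at the end of Lemma \ref{lem:fsm}. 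Compactness then yields $y \in Y_T$ containing $w_1$ and $w_2$ with positive density.

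For part (2), carry out the case analysis of Lemma \ref{lem:ep-successors} to choose $y_1', y_2'$. In the generic cases take $y_i'=y_i$: a finite ambient type compatible with both $h(w_1)$ and $h(w_2)$ already exists at the alphabet $\{A_\rho, B_\rho\}$ where $\rho\in T$ is the longest common prefix of $\rho_1,\rho_2$, and part (1) applies. The delicate cases (Cases 1 and 4 of Lemma \ref{lem:ep-successors}) arise when $\rho\concat(n,n+1) \in T$ and either $\rho_2 = \rho\concat(n,n+1)\concat 0$ or $\rho_1 = \rho\concat(n,n+1)\concat\infty$. There, replace $y_2'$ (resp.\ $y_1'$) by the canonical configuration tiling $\mathbb Z^2$ with $A_{\rho\concat(n,n+1)}$ (resp.\ $B_{\rho\concat(n,n+1)}$), using the $\rho$-alignment inherited from $y_1$ (resp.\ $y_2$). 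The $\Lambda$-guesses on this $y_i'$ are uniquely determined pixelwise, and its lex-type equals $\rho_i$, so the part (1) construction with ambient type $\rho\concat n$ or $\rho\concat(n+1)$ completes the argument.

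The main obstacle is verifying that the computational overlay remains consistent when the trap zones of a single configuration hold patterns of locally distinct types. This works because macrotiles in $Y_T$ record only information about their own responsibility zone, and a higher-level ancestor records the union of local facts reported by its children: possibly several square sizes on the alphabet $\{A_\sigma, B_\sigma\}$, along with any outstanding partial corners and sides. Since the chosen ambient type $\sigma\concat t$ lies outside $T$ and every item recorded on the parameter tapes of $w_1$ and $w_2$ is consistent with it, no halting condition in the final step of the algorithm from the preceding subsection is triggered at any level. The trap-zone accuracy modifications recorded at the end of Section \ref{sec:drs-trap} then carry accuracy through all levels of the step-up, as required.
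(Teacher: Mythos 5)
Your overall strategy matches the paper's: reduce part (2) to the case analysis of Lemma \ref{lem:ep-successors}, replacing $y_2$ (resp.\ $y_1$) by the canonical infinite array of $A_{\sigma\concat(n,n+1)}$ (resp.\ $B_{\sigma\concat(n,n+1)}$) in the two delicate cases, choose a common finite ambient type $\sigma\concat t\notin T$, and realize both patterns in trap zones of a single configuration as in Lemma \ref{lem:fsm}. That part is fine. The gap is at the one step of this lemma that actually requires a new idea beyond those two earlier lemmas: you place $h(w_1)$ and $h(w_2)$ into trap zones ``whose macrosymbol boundaries match,'' but you never show that such trap zones coexist in one configuration. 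Placing $h(w_1)$ pins down the global $\{A_\sigma,B_\sigma\}$ grid; from then on the offset of that grid relative to successive trap zones advances by the macrotile pixel size modulo $j_\sigma$, so only the residues in the subgroup generated by that step size are realized. The pattern $w_2$ comes from a different configuration and carries its own internal offset, which in general is not the offset of $w_1$; a trap zone accommodating it exists only if the macrotile size generates all residues modulo $j_\sigma$, i.e.\ only if the two sizes are coprime.

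This is exactly what the paper's ``primality trick'' supplies: macrotile sizes are powers of $2$, while $j_\sigma=\prod_j(n_j+3)$ is odd because $T\subseteq(2\Omega)^{<\omega}$ forces each $n_j$ to be even, so every offset occurs with positive density and $h(w_2)$ can be placed. Note that Lemma \ref{lem:fsm} did not need this, because there only one pattern $w$ was being placed and the single offset it already carries trivially recurs periodically; here two independent offsets must be reconciled, and without the coprimality argument the step ``put $h(w_2)$ in a matching trap zone'' can fail. Your proposal instead identifies the computational-overlay consistency as the main obstacle; that part does go through as you describe, but the missing coprimality argument is a genuine hole, and it is also a (third) place where the hypothesis $T\subseteq(2\Omega)^{<\omega}$ is essential.
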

\begin{proof}
This proof will be an addendum to Lemma \ref{lem:ep-successors}.
To deal with all cases at once, let us assume that $\rho_1 \lex \rho_2$,
and if 
$\rho_1 = \sigma\concat n$ and $\rho_2 =  \sigma\concat(n,n+1)\concat 0$, 
then $h(y_2)$ is just an infinite array of $A_{\sigma\concat(n,n+1)}$. 
Similarly let us assume that if $\rho_1 = \sigma\concat(n,n+1)\concat \infty$ 
and $\rho_2 = \sigma\concat(n+1)$, then $h(y_1)$ is just an infinite 
array of $B_{\sigma\concat(n,n+1)}$.  Now let us show that $y_1$ and $y_2$ 
are a transitivity pair.

Let $w_1$ and $w_2$ be patterns of $y_1$ and $y_2$ consisting of
$2\times 2$ arrays of macrotiles of the same size.  By the 
arguments of Lemma \ref{lem:ep-successors}, there exists a finite 
type $\sigma\concat t \not\in T$ such that this type 
is compatible with both $h(w_1)$ and $h(w_2)$.  Furthermore, 
in that Lemma it was essentially shown that for any two locations 
far enough apart, there is an element of $X_T$ with type 
$\sigma\concat t$ such that the patterns
$h(w_1)$ and $h(w_2)$ appear at those 
locations (provided the locations agree about the alignment 
of the macroalphabet $\{A_\sigma,B_\sigma\}$).

Fix a macrotile grid and place $h(w_1)$ in a trap zone.  
This determines the $\{A_\sigma,B_\sigma\}$ macrosymbol 
boundaries in the entire configuration.
It is likely that in $w_2$, the macrosymbol boundaries intersect 
the macrotiles in a different way than they did in $w_1$.  
We need to find a trap zone in which the macrosymbol boundaries 
occur in the same way as they do in $w_2$.  Here we use a 
primality trick.  The pixel size of a macrotile is always a power of 2. 
Whereas, the pixel size of a macrosymbol is $\prod_j (n_j+3)$, 
where $\sigma = (n_0,n_0+1)\cdots(n_k,n_k+1)$.  Since $\sigma \in T$, 
each $n_j$ is even.  Therefore, pixel size of a macrotile 
and the pixel size of a macrosymbol are relatively prime.   The 
distance between one trap zone and the next is exactly one macrotile. 
It follows that every possible way for macrosymbol boundaries 
to intersect the trap zones occurs with positive density.
Therefore, there is a trap zone, sufficiently far away, 
where $w_2$ fits.  Put $h(w_2)$ there.  Fill in the rest of the 
$Y_1$ part of the configuration, producing an element of type 
$\sigma \concat t$.  Then fill up the colors for the computation 
part of the configuration, copying $w_1$ or $w_2$ in the 
target zones, just as in Lemma \ref{lem:fsm}.
\end{proof}

This shows that $H_1 = T_2$.  Finally, we can finish the analysis 
with the analog of Lemma \ref{lem:connectedness}, but it 
is simpler because there is no need to deal with alignment families.

\begin{lemma}\label{lem:connectedness2}
If $F$ is an interval-like equivalence relation on $Y_T$ with $H_1 \subseteq F$, 
and $[x]_F,[y]_F$
are a successor pair of $F$-equivalence classes, then the topological 
closure of $F$ contains a pair $(x',y')$ with $x' \in [x]_F$ and $y' \in [y]_F$.
\end{lemma}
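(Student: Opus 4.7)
The plan is to copy the proof of Lemma~\ref{lem:connectedness} almost verbatim, translating statements about $X_T^a$ into statements about $Y_T$ via the factor map $h : Y_T \to X_T$. The simplification is that no alignment family is needed: the preceding lemmas show that transitivity pairhood (and hence membership in $H_1 = T_2$) in $Y_T$ depends only on $\type_T \circ h$, not on the alignment of the underlying $X_T$-configuration, so the case analysis can be done uniformly on $Y_T$ without a restriction to $X_T^a$.

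Assume $[x]_F < [y]_F$. First, ask whether there is a longest $\rho \in T$ such that both classes contain elements whose $h$-types properly extend $\rho$. If yes, further ask whether there is a largest $t \in \omega \cup \Omega$ such that $[x]_F$ contains some element whose type extends $\rho\concat t$. When no such largest $t$ exists, pick $x_n \in [x]_F$ with $\type_T(h(x_n)) = \rho\concat n$ for arbitrarily large $n$; interval-likeness together with maximality of $\rho$ forces any element of $[y]_F$ whose type extends $\rho$ to have type exactly $\rho\concat\infty$. The $Y_T$-analog of Lemma~\ref{lem:limit} (stated immediately before the present lemma) pins the type of any limit point $y^*$ of $(x_n)$ to be $\rho\concat\infty$, hence $y^* \in [y]_F$, and the pairs $(x, x_n) \in F$ converge along a subsequence to $(x, y^*) \in \overline F$. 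When the largest $t$ does exist, $[x]_F$ contains a $\lex$-greatest type (either $\rho\concat t$ or $\rho\concat t \concat \infty$, depending on whether $\rho\concat t \in T$) and $[y]_F$ contains a $\lex$-least type that is its $\lex$-successor in $L_T$; then $H_1 \subseteq F$ forces $[x]_F = [y]_F$, contradicting distinctness.

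Otherwise, the $\rho \in T$ with common extensions from both classes form an increasing chain whose union is some $\rho^* \in [T]$. Every $z \in Y_T$ with $\type_T(h(z)) = \rho^*$ lies in either $[x]_F$ or $[y]_F$ by interval-likeness; without loss of generality $z \in [x]_F$. For each $\tau \prec \rho^*$ pick $y_\tau \in [y]_F$ with type extending $\tau$; by compactness of $Y_T$, extract a convergent subsequence $y_{\tau_k} \to z^*$ with $\tau_k \to \rho^*$. Such a limit inherits $\{A_{\tau_k}, B_{\tau_k}\}$-regularity at each level, so $\type_T(h(z^*)) = \rho^*$ and hence $z^* \in [x]_F$; then $(y_{\tau_k}, y) \in F$ converges to $(z^*, y) \in \overline F$, as required. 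The only nontrivial technical ingredient beyond routine bookkeeping is the $Y_T$-version of Lemma~\ref{lem:limit}, which is already in hand, so I do not anticipate any real obstacle.
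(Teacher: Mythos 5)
Your proposal is correct and follows the paper's approach exactly: the paper's proof of this lemma is literally ``Same as the proof of Lemma \ref{lem:connectedness},'' and your write-up is a faithful transcription of that argument to $Y_T$ (the same two-case split on whether a longest finite $\rho\in T$ exists, the same use of the $Y_T$-analog of Lemma \ref{lem:limit} to pin down types of limit points, and the same contradiction with $H_1\subseteq F$ in the successor-type subcase), together with the correct observation that alignment families are unnecessary because transitivity pairhood in $Y_T$ depends only on $\type_T\circ h$.
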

\begin{proof}
Same as the proof of Lemma \ref{lem:connectedness}.
\end{proof}

It follows that for all $\beta$, we have $H_\beta = T_{2\beta}$.  Therefore, 
we may conclude the main theorem of this section.

\begin{theorem}
For any computable tree $T\subseteq(2\Omega)^{<\omega}$, the
$\mathbb Z^2$-SFT $Y_T$ is generalized transitive if and only if $L_T$ is scattered.
Furthermore, if $Y_T$ has generalized transitivity rank $\alpha$ and 
$L_T$ has Hausdorff rank $\beta$, then $\alpha \in \{2\beta-1,2\beta\}$.\qed
\end{theorem}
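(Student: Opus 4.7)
The plan is to mirror the proof of Theorem \ref{thm:tcpe-hausdorff} almost verbatim, with $T_{2\beta}$ playing the role of $E_{2\beta}$ and transitivity pairs playing the role of entropy-or-equal pairs. All the analogous ingredients are already in place for $Y_T$: the preceding lemmas establish $T_{2\beta} \subseteq H_\beta$ for all $\beta \geq 1$, the base case $H_1 = T_2$, and Lemma \ref{lem:connectedness2}, which supplies the topological-connectedness step needed for the induction.

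First I would prove by transfinite induction on $\beta \geq 1$ that $H_\beta = T_{2\beta}$ as subsets of $Y_T^2$. The inclusion $T_{2\beta} \subseteq H_\beta$ is the preceding lemma. For the reverse inclusion, the base case $\beta = 1$ is precisely $H_1 = T_2$ established just above. The limit case is immediate since both sides are defined as increasing unions. For the successor step, assuming $H_\beta = T_{2\beta}$, I observe that $H_\beta$ is interval-like on $Y_T$ (inherited from $L_T^\beta$: if $y_1 \lex y_2 \lex y_3$ and $y_1 H_\beta y_3$, the same property of the Hausdorff class at the type level forces $y_1 H_\beta y_2$ and $y_2 H_\beta y_3$). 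Lemma \ref{lem:connectedness2} then supplies, for each successor pair of $H_\beta$-equivalence classes, a connecting pair in the topological closure; a topological closure followed by a transitive closure (the two $T$-hierarchy operations needed, in whichever order is appropriate for whether $T_{2\beta}$ is already closed) merges exactly those classes into single $H_{\beta+1}$-classes. Hence $H_{\beta+1} \subseteq T_{2\beta+2}$.

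Once $H_\beta = T_{2\beta}$ is established, the main statement follows directly. If $L_T$ is not scattered then no $H_\beta$ equals $L_T^2$, so no $T_{2\beta}$ equals $Y_T^2$; by monotonicity the odd stages $T_{2\beta+1}$ also cannot reach $Y_T^2$ without some later $T_{2\gamma}$ doing so, so $Y_T$ is not generalized transitive. Conversely, if $L_T$ has Hausdorff rank $\beta$, then $H_\beta = L_T^2$ lifts to $T_{2\beta} = Y_T^2$, giving $\alpha \leq 2\beta$; and $H_{\beta-1} \neq L_T^2$ lifts to $T_{2\beta-2} \neq Y_T^2$, giving $\alpha \geq 2\beta-1$. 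Hence $\alpha \in \{2\beta-1, 2\beta\}$.

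The essential difficulty -- ensuring that $Y_T$ faithfully witnesses both the transitivity-pair structure and the topological structure of $X_T$ without spurious macrosymbol-alignment obstructions -- has already been dealt with in the preceding subsection, particularly via the primality trick between the macrotile side length (a power of $2$) and the macrosymbol side length $\prod_j(n_j+3)$ (odd since $T \subseteq (2\Omega)^{<\omega}$), which guarantees that every possible way for macrosymbol boundaries to intersect trap zones occurs with positive density. This is what made the alignment-family workaround unnecessary at the $Y_T$ level, so the induction above goes through without needing to restrict to $Y_T^a$ in the style of Lemma \ref{lem:connectedness}.
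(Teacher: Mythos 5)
Your proposal is correct and follows essentially the same route as the paper: the paper's proof of this theorem is exactly the observation that the preceding lemmas ($T_{2\beta}\subseteq H_\beta$, the base case $H_1=T_2$, and the connectedness lemma for successor classes) combine, by the same transfinite induction used for Theorem \ref{thm:tcpe-hausdorff}, to give $H_\beta=T_{2\beta}$ for all $\beta$, from which the rank comparison follows. Your closing remark about the primality trick rendering alignment families unnecessary is also precisely the simplification the paper notes when stating Lemma \ref{lem:connectedness2}.
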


\subsection{Main results}

To give examples of $\mathbb Z^2$-SFTs of various ranks, it is useful to have a 
class of trees for which it is easy to see the Hausdorff rank of $L_T$.  The 
next definition gives such a class.

\begin{definition}
A \emph{fat tree} is a tree $T \subseteq \omega^{<\omega}$ such that for every 
$\sigma \in T$ with well-founded rank $\alpha$ and every $\beta < \alpha$, there are 
infinitely many $n$ 
such that $\sigma\concat n\in T$ and $\sigma\concat n$ has rank $\beta$.  We 
allow $\alpha$ or $\beta$ to be $\infty$ in case $T$ is ill-founded, and declare 
$\infty < \infty$ to be true.
\end{definition}

Given any tree $T$, there is an easy procedure to turn it into a fat tree.  
For example, $T \times \omega^{<\omega}$ is fat, where 
$(\sigma, \tau) \in S\times T$ if $\sigma$ and $\tau$ have the same 
length, $\sigma \in S$ and $\tau \in T$.  Of course, $S\times T$ is computably 
isomorphic to a tree on $\omega^{<\omega}$, and it is immaterial whether 
$T \subseteq \omega^{<\omega}$ or $T\subseteq \Omega^{<\omega}$ 
or $T \subseteq (2\Omega)^{<\omega}$ or anything else.

\begin{prop}
If $T\subseteq (2\Omega)^{<\omega}$ is well-founded and fat with rank $\alpha$, 
then the Hausdorff rank of $L_T$ 
is $\alpha + 1$.
\end{prop}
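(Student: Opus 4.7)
The plan is to prove this by transfinite induction on $\alpha$. Since $T$ is well-founded, $[T]=\emptyset$, so $L_T=T^+$. For each child $c=\lambda\concat(2k,2k+1)\in T$, let $T_c=\{\tau:c\concat\tau\in T\}$; this subtree is again fat of rank $\beta_c<\alpha$, and the sub-order $L_T^{[c]}:=\{\rho\in L_T:c\preceq\rho\}$ is order-isomorphic to $L_{T_c}$ and convex in $L_T$, so by the inductive hypothesis it has Hausdorff rank $\beta_c+1$. I call each $L_T^{[c]}$ a \emph{block}, and the remaining elements $\lambda\concat a\in L_T$ (with $a\in\omega\cup\Omega\cup\{\infty\}$ and $\lambda\concat a\notin T$) \emph{atoms}. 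Lexicographically, atoms and blocks interleave in an $\omega\cup\Omega\cup\{\infty\}$-pattern, with the atom $\lambda\concat\infty$ on top. The base case $\alpha=1$ is immediate: $T=\{\lambda\}$ has no blocks, $L_T$ is order-isomorphic to $\omega+1$, whose Hausdorff rank is $2$.

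For the upper bound $L_T^{\alpha+1}=L_T^2$, note that by the inductive hypothesis each block $L_T^{[c]}$ collapses to a single class at step $\beta_c+1\leq\alpha$, and once collapsed it is merged via $L_T^1$ with both of its flanking atoms. The graph whose nodes are maximal runs of consecutive atoms with no intervening block, and whose edges are blocks, is a connected path, so by step $\alpha$ all blocks together with all finite atoms lie in a single $L_T^\alpha$-class, with $\{\lambda\concat\infty\}$ as the only other class; these two adjacent classes merge in one further step.

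For the lower bound I show $\lambda\concat\infty$ is alone in its $L_T^\alpha$-class. By fatness, for every $\gamma<\alpha$ there are infinitely many children $c$ of $\lambda$ with $\beta_c\geq\gamma$: in the successor case $\alpha=\delta+1$ take $\beta_c=\delta$, while in the limit case, choose any $\gamma'\in(\gamma,\alpha)$ and take $\beta_c=\gamma'$. Each such block has Hausdorff rank $\beta_c+1>\gamma$, hence retains at least two $L_T^\gamma$-classes; in particular, its bottom and top $L_T^\gamma$-classes are distinct, so the atom-chains flanking the block remain in different $L_T^\gamma$-classes. Therefore there are infinitely many $L_T^\gamma$-classes below $\lambda\concat\infty$, so $\lambda\concat\infty$ is not $L_T^{\gamma+1}$-equivalent to any other element. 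For successor $\alpha$ this directly shows $\lambda\concat\infty$ is alone at step $\alpha$; for limit $\alpha$, $L_T^\alpha=\bigcup_{\gamma<\alpha}L_T^\gamma$ and the same persists. Combined with the upper bound, the Hausdorff rank is exactly $\alpha+1$.

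The main obstacle is the careful bookkeeping at block boundaries: one must verify that an uncollapsed block genuinely separates its two adjacent atom-chains in the $L_T^\gamma$-quotient, and dually that a collapsed block genuinely glues its two sides together. Both points reduce to noting that the bottom $L_T^\gamma$-class of a block merges with the atoms just below the block and the top $L_T^\gamma$-class merges with the atoms just above, and these two classes are distinct iff the block has not yet collapsed—which is exactly the needed fact.
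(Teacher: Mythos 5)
Your proof is correct, and it is organized differently from the paper's. The paper runs one transfinite induction on the derivative level $\alpha$, proving the pointwise claim that $[\rho\concat n]_\alpha=[\rho\concat(n+1)]_\alpha$ if and only if $r_T(\rho\concat(n,n+1))<\alpha$ simultaneously for \emph{every} node of $T$, and only at the end specializes to the root. You instead induct on the rank of the tree, decomposing $L_T$ into convex blocks isomorphic to $L_{T_c}$ for the children $c$ of the root and invoking the proposition recursively on each block. The underlying combinatorics is the same in both arguments (a block of rank $\beta$ keeps its two flanking atoms apart until exactly step $\beta+1$; fatness supplies infinitely many blocks of each smaller rank, cofinally in $\omega$, so $\lambda\concat\infty$ stays isolated through step $\alpha$), but your packaging is more modular at the price of two auxiliary facts that the paper's formulation never needs to isolate: (i) the Hausdorff derivative of a convex suborder, computed internally, agrees with the restriction of the derivative of the ambient order --- this is what licenses applying the inductive hypothesis to a block sitting inside $L_T$ --- and (ii) a block separates its two neighbours at level $\gamma$ iff its own $\gamma$-th derivative is nontrivial, which follows from convexity of the classes. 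You gesture at both, and both are routine, but they are doing real work and each deserves a sentence. Two smaller points to tighten: in the lower bound, ``infinitely many $L_T^\gamma$-classes below $\lambda\concat\infty$'' should be ``infinitely many classes between $x$ and $\lambda\concat\infty$ for every $x<_{\text{lex}}\lambda\concat\infty$,'' which holds because the separating blocks occur at cofinally many positions $n$; and in the upper bound the limit case of $\alpha$ needs the one-line remark that any two finite atoms have only finitely many blocks between them, each collapsing at a stage strictly below $\alpha$.
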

\begin{proof}
%Observe that for each element of $L_T$ of the form $\rho\concat(n,n+1)\concat \infty$, 
%that element has a lexicographic successor $\rho\concat(n+1)$, which is also 
%in $L_T$ because $T\subseteq (2\Omega)^{<\omega}$.  Therefore, for the purposes of 
%computing the Hausdorff rank, we may start by throwing away all elements of $L_T$ 
%of the form $\rho\concat \infty$ for some non-empty $\rho$, as these get merged with 
%a more nicely formatted neighbor in the first Hausdorff derivative.  We do need to keep the $\infty$ 
%type (this one is why the Hausdorff rank is $\beta+1$ instead of $\beta$.)

We claim that for all $\rho \in T$, all $n \in \omega$ and all $\alpha$, that 
$[\rho\concat n]_\alpha = [\rho\concat (n+1)]_\alpha$ if and only if 
 $r_T(\rho\concat(n,n+1)) < \alpha$.  This is proved by induction. If $\alpha = 1$, 
 the distinction is only whether $\rho\concat(n,n+1) \in T$, and the conclusion 
 is clear from the definition of $L_T$.  
 
 If $\alpha$ is a limit, then $[\rho\concat n]_\alpha = [\rho\concat(n+1)]_\alpha$ 
 if and only if the same is true for some $\beta < \alpha$, if and only if $r_T(\rho\concat(n,n+1))<\beta$ 
 for some $\beta<\alpha$, if and only if $r_T(\rho\concat(n,n+1))< \alpha$.
 
 Finally if $\alpha=\beta+1$ suppose first that $r_T(\rho\concat(n,n+1)) \leq \beta$.  
 Sicne $\rho\concat n$ and $\rho\concat(n,n+1)\concat 0$ are successors 
 in $L_T$, they are in the same class in $L_T^\beta$.  Similarly, 
 $[\rho\concat(n,n+1)\concat \infty]_\beta = [\rho\concat(n+1)]_\beta$.  
 Furthermore, for each $m$, we know that 
 $\rho\concat(n,n+1)\concat(m,m+1)$ has rank strictly less than $\beta$, 
 and therefore $[\rho\concat(n,n+1)\concat m]_\beta=[\rho\concat(n,n+1)\concat(m+1)]_\beta$.  
So in fact $[\rho\concat n]_\beta$ and $[\rho\concat (n+1)]_\beta$ 
are successor classes, and the desired conclusion follows.

On the other hand, suppose that $r_T(\rho\concat(n,n+1)\geq \alpha$.  Then by fatness, 
there are infinitely many $m$ such that $r_T(\rho\concat(n,n+1)\concat(m,m+1)) = \beta$.
For such $m$, we have 
$[\rho\concat(n,n+1)\concat m]_\beta < [\rho\concat(n,n+1)\concat(m+1)]_\beta$.  Therefore, 
$[\rho\concat n]_\beta$ and $[\rho\concat (n+1)]_\beta$ are separated by infinitely 
many $L_T^\beta$ equivalence classes.

Letting $\alpha = r_T(\lambda)$, we see that $[0]_\alpha = [n]_\alpha$ for every $n$, 
so $[0]_{\alpha+1} = [\infty]_{\alpha+1}$, thus the Hausdorff rank is at most $\alpha+1$. 
But for any $\beta<\alpha$, there are infinitely many $m$ such that $[m]_\beta < [m+1]_\beta$, 
so $[0]_\alpha \neq [\infty]_\alpha$.  
\end{proof}

\begin{theorem}
For every ordinal $\alpha<\omega_1^{ck}$, there is a $\mathbb Z^2$-SFT 
with TCPE rank $\alpha$.
\end{theorem}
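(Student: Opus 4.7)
The plan is to apply the preceding theorem combined with the Pavlov operation $f$: for any computable tree $T \subseteq (2\Omega)^{<\omega}$, the $\mathbb Z^2$-SFT $f(Y_T)$ has TCPE rank in $\{2\beta - 1, 2\beta\}$, where $\beta$ is the Hausdorff rank of $L_T$. It therefore suffices, for each $\alpha < \omega_1^{ck}$, to exhibit a computable tree $T$ whose associated SFT has TCPE rank exactly $\alpha$.

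First I realize every computable ordinal $\beta \geq 2$ as the Hausdorff rank of $L_T$ for some computable tree. For successor $\beta = \gamma + 1$, the fat-tree proposition applies directly: given any computable presentation of $\gamma$, one effectively builds a computable fat well-founded tree $T \subseteq (2\Omega)^{<\omega}$ of rank $\gamma$, and its $L_T$ has Hausdorff rank $\gamma + 1$. For limit $\beta$, I use a non-fat computable tree obtained by attaching under a common root, as disjoint subtrees, fat well-founded subtrees whose ranks are cofinal in $\beta$; a routine Hausdorff-derivative calculation shows the resulting $L_T$ has Hausdorff rank exactly $\beta$. Since every computable $\beta$ arises from a computable presentation, this step is uniform in $\beta$.

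The delicate part is pinning down which of $\{2\beta - 1, 2\beta\}$ is the exact TCPE rank. For the fat-tree construction, inspection of the derivation shows the rank is $2\beta - 1$: configurations whose type ends in $\infty$ appear as topological limits of finite-type configurations, so the final closure step merging the two remaining equivalence classes is accomplished by a topological closure rather than a symmetric-transitive one. To instead realize $2\beta$, I modify $T$ so that the $\infty$-type equivalence class splits into two topologically isolated pieces --- for instance, by replicating the original tree under two distinct children of a new root, producing two distinguishable flavors of $\infty$-type configurations --- thereby forcing the final connection to require a symmetric-transitive closure. Together these two constructions cover all $\alpha \geq 3$; the small cases $\alpha = 1, 2$ are handled by ad hoc direct constructions outside this framework (the full shift on two symbols for $\alpha = 1$).

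The main obstacle is the parity bookkeeping described above: verifying that the modification truly shifts the rank by exactly one rather than by zero or by two, and that the same construction works uniformly across all ranges of $\alpha$ including those straddling limit ordinals. This requires careful tracing through the proofs of Lemma \ref{lem:connectedness2} and the main theorem of this section, to check that the extra topological isolation is preserved by the Pavlov fabric operation and that no hidden topological bridge reconnects the isolated $\infty$-classes prematurely; but no fundamentally new ideas beyond the framework of this section are needed.
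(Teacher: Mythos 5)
Your overall framework is the paper's: build $f(Y_T)$ for computable trees $T$ and read the TCPE rank off the Hausdorff rank of $L_T$, using fat trees for the odd ranks $2\beta-1$ and a root modification to force the final transitive-closure step for the even ranks $2\beta$. (The paper's even-rank modification is to prepend a single node, $S=\{(0,1)\concat\sigma:\sigma\in T\}$, rather than to duplicate $T$ under two children; your variant should also work, since either way the penultimate stage leaves classes whose full merger cannot be achieved by topological closure alone.)

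The genuine gap is your treatment of limit ordinals. No choice of tree can make $L_T$ have limit Hausdorff rank: $L_T$ always has a least element (type $0$) and a greatest element (type $\infty$), and if the two endpoints become identified at a limit stage $\lambda$ of the derivative, they were already identified at some stage $\gamma<\lambda$, which forces $L_T^{\gamma'}$ to be finite for some $\gamma'<\gamma$ and hence the rank to be $\le\gamma'+1<\lambda$. So the Hausdorff rank of $L_T$ is always a successor, and consequently the TCPE rank of $f(Y_T)$ is always a successor ordinal. In particular, your proposed tree for limit $\beta$ (fat subtrees of cofinal ranks attached under a common root) does not have Hausdorff rank $\beta$; the top-level type $\infty$ remains separated from the finite types by infinitely many unresolved speed bumps at every stage below $\beta$, so the rank comes out to $\beta+1$, and the resulting SFT has TCPE rank $\beta+1$, not $\beta$. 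Realizing limit TCPE ranks requires an idea outside the tree framework: the paper adjoins two extra symbols $\ast$ and $\dagger$ to $\Lambda$ with local restrictions that create an entropy-pair chain from some type-$\infty$ configurations through $\ast^{\mathbb Z^2}$ and $\dagger^{\mathbb Z^2}$ to some type-$0$ configurations already at stage one. This ``premature'' connection removes the endpoint obstruction, so that every pair of configurations is connected at some stage strictly below the limit while no single stage below the limit suffices, which is exactly what a limit TCPE rank requires. Without some such modification your two constructions cover only the successor ordinals, and the theorem as stated (every $\alpha<\omega_1^{ck}$) is not proved.
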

\begin{proof}
For any computable ordinal $\beta$, there is a computable well-founded fat tree $T\subseteq (2\Omega)^{<\omega}$ 
of rank $\beta$.  
Let $Y_T$ be the SFT defined in the previous section.  Then $Y_T$ has a fully
supported measure, and the generalized 
transitivity rank of $Y_T$ is either $2\beta-1$ or $2\beta$.  Therefore, 
letting $f$ be the ``put it on fabric'' operation of Theorem \ref{thm:pavlov-modified}, 
the SFT $f(Y_T)$ has TCPE rank $2\beta-1$ or $2\beta$.

But, we can say more, the TCPE rank of $Y_T$ is $2\beta -1$.
By the proof of the previous proposition, when $T$ is fat, $L_T^{\beta-1}$ 
has two equivalence classes, one containing type $\infty$, the other containing 
everything else.  Since every element of type $\infty$ is the limit of a sequence 
of elements of finite type, the topological closure step is sufficient to connect everything, 
and thus the TCPE rank is $2\beta-1$.  We can also get TCPE rank $2\beta$ 
from a fat tree $T$ of rank $\beta$ by considering $Y_S$, where 
$$S = \{(0,1)\concat \sigma : \sigma \in T\}.$$
Now $L_S^{\beta-1}$ has two equivalence classes, types less than 1 and 
types greater than or equal to 1.  In this case, the topological closure step is not sufficient 
to connect the 0 type to the $\infty$ type; the subsequent transitive 
and symmetric closure step is needed, so the rank of $Y_S$ is $2\beta$.  

Finally, we can achieve SFTs with limit TCPE rank, using a variation on
$Y_T$ for fat $T$, by connecting the 
$\infty$ type to the 0 type prematurely using some extra symbols. 
Adding symbols $\ast$ and $\dagger$ to $\Lambda$, we can add restrictions 
which ensure that if $\ast$ appears, then no gray symbol of $\Lambda$ appears, 
and if $\dagger$ appears, then no white symbol of $\Lambda$ appears. 
Then some elements of type $\infty$ are an entropy pair with $\ast^{\mathbb Z^2}$, 
$\ast^{\mathbb Z^2}$ is an entropy pair with $\dagger^{\mathbb Z^2}$, 
$\dagger^{\mathbb Z^2}$ is an entropy pair with some elements of type 0, 
and no other entropy pair relations are added.  These restrictions are also 
easily enforced with sofic computation.
\end{proof}

The same methods also allow us to show that TCPE admits no simpler description 
in the special case of $\mathbb Z^2$-SFTs.  It is not true that $Y_T$ has 
TCPE if and only if $T$ is well-founded, because some ill-founded trees still 
have a scattered $L_T$ (consider for example the tree with only a single path).
However, we can get around that by fattening the tree first.

\begin{theorem}
The property of TCPE is $\Pi^1_1$-complete in the set of $\mathbb Z^2$-SFTs.
\end{theorem}
\begin{proof}
We describe an algorithm which, given an index for a computable tree $S$, 
produces a $\mathbb Z^2$-SFT which has TCPE if and only if $S$ is well-founded. 
Uniformly in an index for $S$, we can produce an index for the tree 
$T\subseteq (2\Omega)^{<\omega}$ which 
is obtained by fattening $S$.  If $S$ was well-founded, then $T$ is also well-founded 
(the fattening does not increase the rank of any node).  But if $S$ was ill-founded, 
then $T$ is not only ill-founded, but also $L_T$ is not scattered, because 
$[T]$ contains a Cantor set, and so a copy of $\mathbb Q$ can be order-embedded into 
$L_T$ using types from $[T]$.  Therefore, $Y_T$ has TCPE if and only if $S$ 
was well-founded.
\end{proof}

\bibliographystyle{plain}
\bibliography{tcpe}

\begin{thebibliography}{10}

\bibitem{BGR-TA}
Sebasti\'an Barbieri and Felipe Garc\'ia-Ramos.
\newblock A hierarchy of topological systems with completely positive entropy.
\newblock Available arXiv:1803.01948.

\bibitem{Berger1966}
Robert Berger.
\newblock The undecidability of the domino problem.
\newblock {\em Mem. Amer. Math. Soc. No.}, 66:72, 1966.

\bibitem{Blanchard1992}
F.~Blanchard.
\newblock Fully positive topological entropy and topological mixing.
\newblock In {\em Symbolic dynamics and its applications ({N}ew {H}aven, {CT},
  1991)}, volume 135 of {\em Contemp. Math.}, pages 95--105. Amer. Math. Soc.,
  Providence, RI, 1992.

\bibitem{Blanchard1993}
Fran\c{c}ois Blanchard.
\newblock A disjointness theorem involving topological entropy.
\newblock {\em Bull. Soc. Math. France}, 121(4):465--478, 1993.

\bibitem{DurandLevinShen2008}
Bruno Durand, Leonid~A. Levin, and Alexander Shen.
\newblock Complex tilings.
\newblock {\em J. Symbolic Logic}, 73(2):593--613, 2008.

\bibitem{DurandRomashchenko-TA}
Bruno Durand and Andrei Romashchenko.
\newblock The expressiveness of quasiperiodic and minimal shifts of finite
  type.
\newblock Available arXiv:1802.01461.

\bibitem{DurandRomashchenkoShen2012}
Bruno Durand, Andrei Romashchenko, and Alexander Shen.
\newblock Fixed-point tile sets and their applications.
\newblock {\em J. Comput. System Sci.}, 78(3):731--764, 2012.

\bibitem{Gangloff-PC}
Silvere Gangloff.
\newblock Personal communication, {D}ecember 2018.

\bibitem{HochmanMeyerovitch2010}
Michael Hochman and Tom Meyerovitch.
\newblock A characterization of the entropies of multidimensional shifts of
  finite type.
\newblock {\em Ann. of Math. (2)}, 171(3):2011--2038, 2010.

\bibitem{kechris-book}
Alexander~S. Kechris.
\newblock {\em Classical descriptive set theory}, volume 156 of {\em Graduate
  Texts in Mathematics}.
\newblock Springer-Verlag, New York, 1995.

\bibitem{LindMarcus1995}
Douglas Lind and Brian Marcus.
\newblock {\em An introduction to symbolic dynamics and coding}.
\newblock Cambridge University Press, Cambridge, 1995.

\bibitem{Pavlov2014}
Ronnie Pavlov.
\newblock A characterization of topologically completely positive entropy for
  shifts of finite type.
\newblock {\em Ergodic Theory Dynam. Systems}, 34(6):2054--2065, 2014.

\bibitem{Pavlov2018}
Ronnie Pavlov.
\newblock Topologically completely positive entropy and zero-dimensional
  topologically completely positive entropy.
\newblock {\em Ergodic Theory Dynam. Systems}, 38(5):1894--1922, 2018.

\bibitem{Robinson1971}
Raphael~M. Robinson.
\newblock Undecidability and nonperiodicity for tilings of the plane.
\newblock {\em Invent. Math.}, 12:177--209, 1971.

\bibitem{sacks}
Gerald~E. Sacks.
\newblock {\em Higher recursion theory}.
\newblock Perspectives in Mathematical Logic. Springer-Verlag, Berlin, 1990.

\bibitem{Salo-TA}
Ville Salo.
\newblock Entropy pair realization.
\newblock Available arXiv: 1904.01285.

\bibitem{SipserITC}
Michael Sipser.
\newblock {\em Introduction to the Theory of Computation}.
\newblock Cengage, 2012.

\bibitem{Wang1962}
Hao Wang.
\newblock Proving theorems by pattern recognition {II}.
\newblock {\em Bell System Tech. J.}, 40:1--42, 1961.

\bibitem{Westrick2017}
Linda~Brown Westrick.
\newblock Seas of squares with sizes from a {$\Pi^0_1$} set.
\newblock {\em Israel J. Math.}, 222(1):431--462, 2017.

\end{thebibliography}

\end{document}